\documentclass[12 pt]{amsart}
\usepackage[english]{babel}
\usepackage[letterpaper,top=2cm,bottom=2cm,left=3cm,right=3cm,marginparwidth=1.75cm]{geometry}
\usepackage{amsthm}
\usepackage{amssymb}
\usepackage{comment}
\usepackage{amsmath}
\usepackage{graphicx}
\usepackage{tikz}
\usepackage{tkz-graph}
\usetikzlibrary{shapes}
\usepackage{float}
\usepackage{verbatim}
\usepackage{subcaption}

\newtheorem{theorem}{Theorem}
\newtheorem{lemma}[theorem]{Lemma}

\newtheorem{corollary}[theorem]{Corollary}

\DeclareMathOperator{\Pa}{Parent}
\DeclareMathOperator{\Desc}{Desc}
\DeclareMathOperator{\children}{Children}
\DeclareMathOperator{\vs}{vs}
\DeclareMathOperator{\pw}{pw}

\title{The Isoperimetric Peak of Complete Trees}
\author{Anthony Bonato, Lazar Mandic, Trent G.\ Marbach, Matthew Ritchie}

\begin{document}
\maketitle

\begin{abstract}
We give exact values and bounds on the isoperimetric peak of complete trees, improving on known results. For the complete $q$-ary tree of depth $d$, if $q\ge 5$, then we find that the isoperimetric peak equals $d$, completing an open problem. In the case that $q$ is 3 or 4, we determine the value up to three values, and in the case $q=2$, up to a logarithmic additive factor. Our proofs use novel compression techniques, including left, down, and aeolian compressions. We apply our results to show that the vertex separation number and the isoperimetric peak of a graph may be arbitrarily far apart as a function of the order of the graph and give new bounds on the pathwidth and pursuit-evasion parameters on complete trees.
\end{abstract}

\section{Introduction}

Isoperimetry studies the relationship between an object's perimeter and its volume and is an ancient mathematical problem with modern applications in differential geometry, analysis, and graph theory.  For graphs, isoperimetry studies the comparison between the number of vertices in some subset of vertices and the bordering vertices of this subset. Isoperimetry appears in many topics within graph theory, such as in spectral graph theory \cite{MR0782626,fan}, independence number \cite{MR4079359}, network connectivity \cite{conn}, and pursuit-evasion games \cite{MR4643801,bonato2023kvisibility}. For more background on isoperimetric inequalities in graph theory, see \cite{fan}.

Define the \emph{vertex-border} of a set of vertices $S \subseteq V(G)$ as 
\(\delta(S) = \{v \in V(G)\backslash S : N(v) \cap S \neq \emptyset\}. \)
The \emph{vertex-isoperimetric parameter on graph $G$ with volume $s$} is defined as $\Phi_V(G,s) = \min_{S:|S|=s} |\delta(S)|$, 
The \emph{vertex-isoperimetric peak} of $G$, written $\Phi(G)$, is the maximum cardinality of $\Phi_V(G,s)$ over all $s$. 
That is, \[\Phi(G) = \max\limits_{0\leq k \leq |V(G)|} \Phi_V(G,s).\]
We will restrict ourselves to vertex-isoperimetry in this paper, although we do note that an edge-version of the isoperimetric peak is studied, where the border of a set is instead the edges that have exactly one vertex in the set. 

Our present focus is on the vertex isoperimetry of complete trees. The \emph{complete $q$-ary} tree of depth $d$ is the rooted tree where every vertex of distance less than $d$ from the root has $q$ children and where the vertices of distance $d$ from the root have degree $1$. 
The lower bound of the isoperimetric peak of complete $q$-ary trees was first studied by Otachi and Yamazaki, who showed the following result. 

\begin{theorem}\cite{MR2410445} For a complete $q$-ary tree $T$ of depth $d$, 
\[
    \Phi_V(T) \geq \frac{d \log{q} - (q + 6 + 2 \log{d})}{q + 6 + 2\log{d}}.
\] 
\end{theorem}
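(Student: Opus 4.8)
The plan is to lower-bound $\Phi_V(T)$ directly, by producing a single volume $s$ for which \emph{every} vertex set of that size has border of size at least some target $\tau$; since $\Phi_V(T)\ge\Phi_V(T,s)$, this gives $\Phi_V(T)\ge\tau$. Throughout I write $N_i=\tfrac{q^{i+1}-1}{q-1}$ for the number of vertices in a complete $q$-ary tree of depth $i$, so that $n=|V(T)|=N_d$ and the $N_i$ are exactly the possible sizes of the subtree of $T$ hanging below a fixed vertex.

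The first and main step is a structural description of sets with small border. Fix $S\subseteq V(T)$ and put $t=|\delta(S)|$. First, $S$ is a union of connected components of $T-\delta(S)$: a component meeting both $S$ and $V(T)\setminus S$ would contain an edge from $S$ to its complement, whose endpoint outside $S$ would lie in $\delta(S)$, contradicting that the component avoids $\delta(S)$. Next, analyse a single component $C$ with topmost vertex $v$ at depth $h$: running downward from $v$ and stopping the first time a vertex of $\delta(S)$ is met yields $C=(\text{subtree of }v)\setminus\bigcup_b(\text{subtree of }b)$, where $b$ ranges over the vertices of $\delta(S)$ whose parent lies in $C$, so $|C|=N_{d-h}-\sum_b N_{d-\mathrm{depth}(b)}$. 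Since $T$ has maximum degree $q+1$, the forest $T-\delta(S)$ has at most $qt+1$ components, and the pruned-off subtrees across all components are indexed by distinct vertices of $\delta(S)$, hence there are at most $t$ of them in total. Summing the contributions of the components comprising $S$ gives $|S|=\sum_{i=0}^{d}b_iN_i$ for integers $b_i$ with $\sum_{i=0}^{d}|b_i|\le(q+1)t+1$; moreover at most $O(t)$ of the $b_i$ are nonzero, since only depths of $\delta(S)$ and depths one below $\delta(S)$ (plus the root) can occur.

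The second step is counting. For parameters $L$ and $K$, the number of integers expressible as $\sum_{i=0}^{d}b_iN_i$ with $\sum_i|b_i|\le L$ and at most $K$ nonzero $b_i$ is at most $\sum_{k=0}^{K}\binom{d+1}{k}2^k\binom{L}{k}$ — choose the support, the signs, and a composition of $\sum|b_i|$ into positive parts — which, with $L=(q+1)t+1$ and $K=O(t)$, is of the form $(Cq\,d\,t)^{O(t)}$. The third step is pigeonhole: let $\tau$ be the largest integer for which this bound, evaluated at $t=\tau$, is still less than $n+1$; the estimates in Step~1 are monotone in $t$, so ruling out that range of representations rules out all $t\le\tau$ at once. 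Hence some $s\in\{0,1,\dots,n\}$ — necessarily with $0<s<n$, as $s=0$ and $s=n$ are trivially representable — is not the size of any set with border at most $\tau$, so $\Phi_V(T,s)>\tau$. Taking logarithms in the inequality defining $\tau$, where $\log(n+1)\approx d\log q$ feeds the numerator and the $O(t)\log d$ from choosing the support of $(b_i)$ feeds the denominator, yields a bound of the shape $\Phi_V(T)\gtrsim\frac{d\log q}{q+\log d}$, and a careful accounting of constants gives exactly the stated inequality.

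I expect the genuine obstacle to be Step~1: verifying that a pruned component really has size $N_{d-h}-\sum_b N_{d-\mathrm{depth}(b)}$ with the vertices $b$ as described, and extracting the sharpest possible bounds both on $\sum_i|b_i|$ and on the number of distinct depths used — because these two quantities are precisely what pin down the constants in the denominator $q+6+2\log d$. The counting and pigeonhole steps are then routine binomial estimates, modulo the two minor points flagged above (monotonicity in $t$, and excluding the endpoints $s=0,n$).
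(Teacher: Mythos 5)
This theorem is not proved in the paper at all: it is quoted from Otachi and Yamazaki \cite{MR2410445} as background, so there is no internal proof to compare against. Judged on its own, your proposal reconstructs what is essentially the standard counting-and-pigeonhole argument for this bound, and its structural core is correct: a set $S$ with $|\delta(S)|=t$ is a union of components of $T-\delta(S)$; each component is a full subtree minus disjoint full subtrees rooted at boundary vertices whose parents lie in the component (your incomparability/disjointness claim does hold); there are at most $qt+1$ components and at most $t$ pruned subtrees overall; and the only levels that can occur are those of boundary vertices, those one below a boundary vertex, and the root, giving $|S|=\sum_i b_iN_i$ with $\sum_i|b_i|\le (q+1)t+1$ and at most $2t+1$ nonzero coefficients. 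The monotonicity remark and the pigeonhole step (some volume $s$ is not representable, hence $\Phi_V(T,s)>\tau$, hence $\Phi_V(T)>\tau$) are also fine.

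The one genuine soft spot is the final quantitative step, which you defer to ``a careful accounting of constants.'' The count in the form you actually state, $(Cqd\,t)^{O(t)}$, leads to a lower bound whose denominator is of order $\log(qdt)$, not $q+6+2\log d$: for large $q$ that is stronger than the stated inequality, but for $q\in\{2,3\}$ and large $d$ (where $\log t$ can be as large as $\log d$) it is weaker by roughly a factor of $2$, so it does not deliver the stated constants. To land the stated form you must bound the factor $\binom{L}{k}$ by $2^{L}\le 2^{(q+1)t+1}$ rather than by $L^{k}$ --- this is exactly where the linear-in-$q$ term of the denominator comes from --- after which the exponent is about $t\,(q+5+2\log d)$ plus lower-order additive terms, and one still has to check that these fit under $t\,(q+6+2\log d)$, including the regime where the relevant $t$ is small compared with $\log d$ (there the sharper $L^{k}$-type bound has to be used instead). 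So the approach is right and repairable, but the specific constants $q+6+2\log d$ do not follow automatically from the estimates as you wrote them.
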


Bharadwaj and Chandran tightened the result, yielding a lower bound that is linear to the upper bound when $q$ is constant. 

\begin{theorem}\cite{MR2502192} For a complete $q$-ary tree $T$ of depth $d$, when $q\geq 3$ there is a constant $c$ such that $c \frac{d}{\sqrt{q}} \leq \Phi_V(T) \leq d,$ and when $q=2$, $c d \leq \Phi_V(T) \leq d.$ 
\end{theorem}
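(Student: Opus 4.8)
The plan is to split the statement into the easy upper bound $\Phi_V(T)\le d$ and the substantive lower bound, since these need completely different ideas; note that one cannot simply invoke the Otachi--Yamazaki bound, as for fixed $q$ and $d\to\infty$ it is weaker than $cd/\sqrt q$. For the upper bound I would show, by induction on the depth $d$, that for \emph{every} volume $s$ there is a set $S$ with $|S|=s$ and $|\delta(S)|\le d$. Write $T$ with root $r$ and child subtrees $T^{(1)},\dots,T^{(q)}$, each a complete $q$-ary tree of depth $d-1$ on $m=\frac{q^{d}-1}{q-1}$ vertices. If $0<s\le m$, set $r\notin S$ and take $S$ to be an inductively good set of size $s$ inside a single $T^{(i)}$; the only vertex of the border outside $T^{(i)}$ is possibly $r$, so $|\delta(S)|\le(d-1)+1=d$. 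If $s>m$, again set $r\notin S$, put $f=\lfloor s/m\rfloor$ child subtrees entirely into $S$, and place an inductively good set of the leftover size $s-fm<m$ inside one further child subtree; the empty child subtrees contribute nothing to the border precisely because $r\notin S$, so again $|\delta(S)|\le 1+(d-1)=d$. The base case $d=0$ is immediate. The only care needed is the bookkeeping of whether $r$ itself falls into the border, which is a $\pm 1$ matter covered by the $+1$ slack.

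For the lower bound the plan is to fix one carefully chosen volume $s^\ast$ and argue that every $S$ with $|S|=s^\ast$ satisfies $|\delta(S)|=\Omega(d/\sqrt q)$, by pinning down which volumes are realizable by sets of small border. The structural backbone is: if $|\delta(S)|\le t$, then deleting $\delta(S)$ splits $T$ into at most $1+tq$ connected pieces, each piece is a connected subgraph lying inside the principal subtree $T_v$ of its topmost vertex $v$, and $S$ is a union of some pieces; moreover such a piece is exactly $T_v$ with an antichain of sub-subtrees removed, so its border contribution is $1$ plus the number of removed sub-subtrees. Because the subtree sizes obey the near-base-$q$ recurrence $n_{k+1}=q\,n_k+1$, this converts ``volume $s$ realizable with border $\le t$'' into a statement that an integer close to $s$ (or to $n_d-s$) admits a signed $q$-ary representation of weight $O(t)$; equivalently its base-$q$ digit sum is small. (A complementary fact I would keep in reserve is that any root-to-leaf path disjoint from $\delta(S)$ is monochromatic, so the border must ``fan out'' to touch almost all $q^{d}$ leaves.) One then estimates the number of volumes reachable with border $\le t$ by choosing the few subtree-sizes involved, their multiplicities and signs, and compares this count with the roughly $q^{d}$ volumes available; a volume $s^\ast$ outside the reachable set certifies $\Phi_V(T)\ge t$ for the largest admissible $t$.

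The hard part — and where the dependence on $q$, the $\sqrt q$ rather than a crude factor, actually lives — is making this count tight. The naive version (count all subsets, or all ways to cut $\le t$ edges) only yields something like $\Omega(d\log q/\log d)$, so one must genuinely exploit the rigidity that low-border sets are unions of few lightly-modified subtrees. A clean route to manage the $q$-dependence is to pass through the edge version: for any $S$ one has $|\delta_V(S)|\ge |\partial_E(S)|/(q+1)$ since each border vertex absorbs at most $q+1$ boundary edges, and in a tree $|\partial_E(S)|=c(S)+c(\overline S)-1$, the total number of connected pieces. So it suffices to produce a volume $s^\ast$ for which every size-$s^\ast$ set and its complement together shatter into $\Omega(d\sqrt q)$ pieces — again a digit-sum argument on $n_0,\dots,n_d$, but now one must optimize over how pieces are distributed between $S$ and $\overline S$, including nested ``holes inside holes'' configurations, to see that $\Omega(d\sqrt q)$ pieces are forced. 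I expect the combinatorial optimization of this mixed $S/\overline S$ shattering, together with the bookkeeping of border vertices shared among several pieces, to be the main obstacle; once the right reachable-volume bound is in hand, dividing by $q+1$ gives $\Phi_V(T)=\Omega(d/\sqrt q)$, and for $q=2$ the same scheme gives $\Omega(d)$, while the upper bound $\Phi_V(T)\le d$ is already established.
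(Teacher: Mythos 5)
First, note that the paper does not prove this statement at all: it is quoted from Bharadwaj and Chandran \cite{MR2502192} as background, so there is no in-paper argument to compare yours against; I can only judge the proposal on its own terms. Your upper bound is correct and complete: the induction on depth, keeping the root out of $S$, filling whole child subtrees and placing the leftover volume inductively inside one further child subtree, gives a set of every cardinality with border at most $d$ (the only edge case, $S=V(T)$, is trivial). This is a perfectly good elementary substitute for the route the paper actually uses later ($\Phi_V(G)\le \mathrm{vs}(G)=\pw(T)$).

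The lower bound, however, is a plan rather than a proof, and the decisive step is missing. Your structural reduction is sound: if $t=|\partial_E(S)|$, the tree falls into exactly $t+1$ components, each component equals a principal subtree minus the (disjoint) principal subtrees hanging below its lower cut edges, so $|S|$ is a signed sum of at most $2t+1$ of the numbers $n_k=\frac{q^{k+1}-1}{q-1}$; and $|\delta(S)|\ge|\partial_E(S)|/(q+1)$. What you never supply is the key quantitative lemma: a concrete volume $s^{\ast}$ for which every such signed representation — in fact every representation of every integer within $O(t)$ of $(q-1)s^{\ast}$, because clearing denominators produces a correction term $\pm(P-M)$ of size $O(t)$ — requires many terms. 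You explicitly defer this (``I expect the combinatorial optimization \ldots\ to be the main obstacle''), and you are right that your fallback counting of reachable volumes only gives about $d\log q/\log d$, which fails for $q=2$. The fix is a digit-weight argument, not a counting one: choose $s^{\ast}$ so that $(q-1)s^{\ast}$ has all balanced base-$q$ digits equal to $\lfloor q/2\rfloor$ (adjusting by $O(q)$ to ensure divisibility by $q-1$). A standard carry/exchange argument shows the minimal weight of a signed power-of-$q$ representation of such a number is $\Theta(qd)$, and weight is subadditive, so the $O(q\log_q t)$ weight of the correction cannot destroy this. That forces $|\partial_E(S)|=\Omega(qd)$ at volume $s^{\ast}$ and hence $|\delta(S)|=\Omega(d)$ for every $q\ge 2$ — stronger than the $cd/\sqrt q$ you were aiming at, with no case split between $q\ge3$ and $q=2$ and no optimization over how pieces are split between $S$ and $\overline{S}$ (the nested-holes bookkeeping is already absorbed in the signed-sum identity). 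As written, though, your proposal establishes only the upper bound; the lower bound rests on a lemma you have identified but not proved.
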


Vrt'{o} found a linear lower bound independent of $q$, thus showing that $\Phi_V(T) = \Theta(d)$ for any family of complete trees for the first time. 

 \begin{theorem} \cite{MR2579861}
For a complete $q$-ary tree $T$ of depth $d$, $\frac{3}{40}(d-2) \leq \Phi_V(T) \leq d.$
 \end{theorem}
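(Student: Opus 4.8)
This splits into the routine upper bound $\Phi_V(T)\le d$ and the substantial lower bound, which I treat separately.

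\textbf{Upper bound.} I would prove by induction on $d$ the stronger statement that for \emph{every} volume $s\in\{0,1,\dots,|V(T)|\}$ there is a set $S$ with $|S|=s$ and $|\delta(S)|\le d$; since this gives $\Phi_V(T,s)\le d$ for all $s$, it gives $\Phi_V(T)\le d$. For $d\ge 1$, write $T$ as a root $r$ joined to disjoint copies $T^{(1)},\dots,T^{(q)}$ of the complete $q$-ary tree of depth $d-1$, and put $n'=|V(T^{(1)})|$. Given $s$: if $s=|V(T)|$ take $S=V(T)$; otherwise $0\le s\le qn'$, so write $s=in'+t$ with $0\le i\le q$ and $0\le t<n'$ (allowing $i=q,\ t=0$ when $s=qn'$), and set $S=V(T^{(1)})\cup\cdots\cup V(T^{(i)})\cup S'$, where $S'\subseteq V(T^{(i+1)})$ has $|S'|=t$ and $|\delta_{T^{(i+1)}}(S')|\le d-1$ by the induction hypothesis. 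Since $r\notin S$ and $T^{(i+2)},\dots,T^{(q)}$ are joined to $S$ only through $r$, every vertex of $\delta(S)$ is either $r$ or a vertex of $T^{(i+1)}$ lying in $\delta_{T^{(i+1)}}(S')$, so $|\delta(S)|\le 1+(d-1)=d$. Letting $i$ and $t$ range over all admissible values realizes every volume, and $d=0$ is immediate.

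\textbf{Lower bound: strategy.} Fix $S$ with $0<|S|<|V(T)|$ and let $M$ be the \emph{mixed skeleton}, the set of vertices $v$ whose rooted subtree $T_v$ meets both $S$ and its complement. Then $M$ is a subtree containing the root, since it is closed under taking parents. Every leaf $\ell$ of $M$ lies at depth $<d$ and each of its children roots a subtree lying entirely in $S$ (``full'') or entirely outside $S$ (``empty''); a short case analysis then shows $\{\ell\}\cup(\text{children of }\ell)$ always contains a vertex of $\delta(S)$. Since these sets are pairwise disjoint over the leaves of $M$, we get $|\delta(S)|\ge(\text{number of leaves of }M)$. So it is enough to exhibit one volume $s^{*}$, depending only on $q$ and $d$, with the property that every $S$ of volume $s^{*}$ has a skeleton with $\Omega(d)$ leaves.

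\textbf{Lower bound: the crux.} To force many skeleton leaves I would argue that a skeleton with few leaves makes $|S|$ arithmetically special. If $M$ has $\ell$ leaves then it has at most $\ell-1$ branching vertices, and outside $M$ the tree decomposes into full and empty subtrees hanging off the nodes of $M$. Tracing the quantity $|S\cap T_v|$ downward through $M$ from the root, it splits into pieces at each branching vertex, changes only by the sizes of the full subtrees absorbed or dropped along each non-branching run, and at each leaf is pinned to one of $O(q)$ values determined by that leaf's depth; unwinding this shows $|S|$ is a sum of a number of subtree sizes $\tfrac{q^{j+1}-1}{q-1}$ that is controlled in terms of $\ell$ and the level structure of $M$, plus a small error term coming from $M$ itself. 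I would then pick $s^{*}$ to be a fixed fraction of $|V(T)|$ whose base-$q$ digits are ``incompressible'' in exactly the sense this analysis needs, so that no such representation is possible unless many terms — hence a skeleton with many leaves, hence by the previous paragraph a border of size $\Omega(d)$ — are used; carrying the (deliberately crude) constants through the inner-versus-outer boundary estimate, the leaves-versus-boundary count, and the slack in the digit argument is what produces the explicit $\tfrac{3}{40}(d-2)$.

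\textbf{Main obstacle.} The difficulty is concentrated in that last step, and it is genuinely not a counting argument: even granting a boundary bound linear in $d$, the number of possible ``shapes'' of a cheap set is $|V(T)|^{\Theta(d)}$, vastly more than the number of volumes, so one cannot simply say ``few volumes are cheap.'' One has to use the \emph{arithmetic} of the subtree sizes $\tfrac{q^{j+1}-1}{q-1}$ (the base-$q$ repunits) rather than only their number, and track carefully how the full subtrees that $S$ may swallow are distributed across the levels of $M$; I would organize this as an induction on $d$ maintaining, for each possible leaf-count, a structural description of the volumes attainable with a skeleton of that size, and I expect that bookkeeping — not any one idea — to be the bulk of the work. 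The whole argument is uniform in $q$; larger $q$ only makes the subtree sizes grow faster and an incompressible target volume easier to construct, consistent with the fact that the constant here is far from optimal and that sharp values are available for $q\ge 5$.
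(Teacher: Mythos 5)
Your upper-bound half is fine: the recursive construction exhibiting, for every volume $s$, a set with border at most $d$ is correct and is a legitimately more elementary route than the one this paper takes (which deduces $\Phi_V(T)\le \pw(T)$ from the folklore inequality $\Phi_V\le \vs$ and known pathwidth values). The lower bound, however, is where the entire content of the theorem lies, and there your argument has a fatal structural flaw, not just missing bookkeeping.

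The flaw is in the reduction ``it is enough to exhibit one volume $s^{*}$ such that every $S$ of volume $s^{*}$ has a skeleton with $\Omega(d)$ leaves.'' No such volume exists. For \emph{every} volume $s$ with $0<s<|V(T)|$, the greedy set produced by your own upper-bound construction (fill some children's subtrees completely, recurse into exactly one child with the leftover budget, leave the rest empty, root excluded) has a mixed skeleton $M$ that is a single root-to-vertex path: at each step only the one subtree you recurse into meets both $S$ and its complement. So the minimum, over sets of any prescribed volume, of the number of skeleton leaves is $1$, and your (correct) lemma $|\delta(S)|\ge \#\{\text{leaves of }M\}$ can never certify more than $\Phi_V(T,s)\ge 1$. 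Relatedly, the ``arithmetic incompressibility'' idea cannot be rescued: since every integer $0\le s\le |V(T)|$ is realizable by a path-skeleton set (this is exactly the base-$q$-type greedy representation by the subtree sizes $\tfrac{q^{j+1}-1}{q-1}$), there is no volume that is ``special'' in the sense you need. The genuine difficulty, which your sketch never engages, is that a path-skeleton set of the right volume is forced to pay a border vertex at many \emph{levels along the path} — one must track, level by level, whether the path vertex or one of its off-path children lies in $\delta(S)$, as a function of which of those vertices are in $S$. That level-by-level accounting (done in Vrt'o's paper, and in this paper via left/down/aeolian compressions in Theorems~\ref{thm:lower_q5}--\ref{thm:lower_q2}) is the actual proof; it is also the only place an explicit constant like $\tfrac{3}{40}(d-2)$ can come from, and your proposal never derives it.
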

 
We improve these results by providing a lower bound on the isoperimetric peak of complete trees that matches the known upper bound when $q \geq 5$, and for $q \in \{3,4\}$, we give both an upper and lower bound that differ from each other by at most $3$, representing a substantial improvement on previous results. We also improve the lower bound in the case that $q=2$, although the lower bound we find differs from the upper bound by $\log_2(d)/2+4$. 
See Section~\ref{sec:upperbounds} for our discussion on upper bounds. 

Our main results are contained in the following three theorems.

\begin{theorem} \label{thm:both_q5}
For $T$ a complete $q$-ary tree of depth $d$ with $q\geq 5$, $\Phi_V(T) =  d.$
\end{theorem}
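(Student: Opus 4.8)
The upper bound $\Phi_V(T)\le d$ is already supplied by the cited results, so the task is the matching lower bound $\Phi_V(T)\ge d$. Since $\Phi_V(T)=\max_s\Phi_V(T,s)$, it suffices to exhibit a \emph{single} volume $s$ with $\Phi_V(T,s)\ge d$; together with the upper bound this forces $\Phi_V(T)=d$. I would work from the reformulation that for any $S\subseteq V(T)$ the set $B=\delta(S)$ realizes $S$ as a union of connected components of $T-B$, while conversely $\delta(S)\subseteq B$ whenever $S$ is such a union; hence $\Phi_V(T,s)=\min\{\,|B| : \text{some union of components of } T-B \text{ has exactly } s \text{ vertices}\,\}$, and it is enough to find $s$ that is \emph{not} a subset sum of the component sizes of $T-B$ for any $B$ with $|B|\le d-1$. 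The natural candidate is a balanced volume, say $s$ roughly $n/2$ with $n=(q^{d+1}-1)/(q-1)$, so that the claim becomes: no fewer than $d$ vertices split $T$ into two nearly equal parts. The hypothesis $q\ge 5$ should be exactly what makes the relevant ``gap'' of non-achievable volumes nonempty --- a cut of size $j$ leaves a large ``core'' component together with subtree fragments of sizes $(q^{\ell}-1)/(q-1)$, and for $q\ge 5$ these are too coarse (and the core too large) to be combined to a value near $n/2$ using only $j\le d-1$ cuts, whereas for $q\le4$ the component sizes become dense enough to all but fill the gap, which is why those cases lose a constant or logarithmic term.

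The obstruction is that $B$ with $|B|\le d-1$ ranges over an unmanageably large family, and this is where the compression machinery enters. Starting from a border-minimizing set $S$ of volume $s$, I would apply in turn: \emph{left compression}, moving $S$ to be as far ``left'' as possible within each level with respect to the canonical left-to-right order on vertices; \emph{down compression}, sliding the membership of $S$ toward the leaves so that $S$ approaches a union of full subtrees; and \emph{aeolian compression}, the new operation, redistributing mass between sibling subtrees and between adjacent levels to further rigidify the shape. Each of the three operations must be shown not to increase $|\delta(S)|$, and the process shown to terminate, which I would do via a lexicographic potential (encoding how left, down, and balanced $S$ is) that strictly decreases at each step. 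The payoff is that a fully compressed $S$ lies in an explicit low-dimensional family --- essentially a ``staircase'' of full subtrees hung along a single root-to-leaf path together with a small boundary-adjusting set --- whose border is a closed-form expression in a few integer parameters. Minimizing that expression over the family subject to $|S|=s$ and showing the minimum is at least $d$ is then a residue/counting argument: every full subtree has size $(q^{\ell}-1)/(q-1)\equiv 1\pmod q$, so building a balanced volume out of few pieces forces the number of pieces, hence the size of the set of their parents, hence the border, to be large, and one checks the bound comes out to exactly $d$ precisely when $q\ge5$. Substituting back gives $\Phi_V(T,s)\ge d$, hence $\Phi_V(T)\ge d$, and with the known upper bound $\Phi_V(T)=d$.

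The main difficulty, and the technical heart of the argument, is proving that down compression and especially aeolian compression never increase the border and that the three together really do reduce the search to the claimed finite family: moving a vertex across the boundary can simultaneously destroy and create border vertices, so each compression step demands a careful local analysis at the affected vertices, and one must check the compressions interact well enough to be iterated to a normal form. A secondary difficulty, and the place where the hypothesis is genuinely used, is pinning down the exact volume $s$ and carrying out the optimization over the canonical family so that it returns the clean value $d$ rather than $d-O(1)$; it is precisely this last step that degrades to an additive constant for $q\in\{3,4\}$ and to a logarithmic term for $q=2$.
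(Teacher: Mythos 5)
Your overall skeleton matches the paper's: the upper bound is quoted from the pathwidth/vertex-separation results, and the lower bound is obtained by exhibiting one carefully chosen volume $s$, after reducing to sets that are normalized by left, down, and aeolian compressions. The paper indeed proves each compression does not increase the border and terminates, and then reads the bound off the rigid structure of compressed sets. Up to that point your plan is sound.

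The genuine gap is in the decisive step, where you pass from the compressed normal form to the bound $d$. First, the choice ``$s$ roughly $n/2$'' is not enough: the adversary chooses $S$, so $s$ must be chosen so that \emph{no} set of that exact cardinality has small border, and a volume near $n/2$ can fail spectacularly --- if $s$ happens to be a multiple of $t_1=(q^d-1)/(q-1)$, then a union of full child-subtrees of the root has border of size $1$. The paper instead takes $s=\sum_{i=1}^{d}2t_i$ (two full subtrees' worth of volume per level), which is what forces the mass to cascade down a single root-to-leaf path. Second, your proposed ``residue/counting'' finish does not work as stated: the components of $T-B$ are not all full subtrees of sizes $(q^{\ell}-1)/(q-1)$ (the core component and any fragment containing further deleted vertices break the congruence structure), and even if the number of pieces in the union were forced to be large, that does not bound $|B|$ from below by the same quantity, since one deleted vertex can be the parent of up to $q$ component subtrees. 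What the paper actually does, and what is missing from your sketch, is a structural lemma for aeolian-compressed sets (if $\Desc(u)$ and $\Desc(v)$ both meet $S$ with $u$ left of $v$, then $\Desc(u)\setminus S\subseteq\{u\}$) combined with a level-by-level induction along the path $u_{i+1}=u_i^{(3)}$: the volume bookkeeping shows descendants of $u_i^{(1)},u_i^{(2)}$ are essentially filled, descendants of $u_i^{(4)},u_i^{(5)}$ are empty, and hence (this is exactly where $q\geq5$ enters, via the down-compression lemma) $u_i\notin S$, so every level contributes at least one border vertex, with a careful count $\delta_i=2i-b_i$ to guarantee the total is at least $d$ rather than $d-O(1)$. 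Without that choice of $s$ and that per-level forcing argument, your outline does not yield the exact lower bound $d$.
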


\begin{theorem} \label{thm:both_q34}
For $T$ a complete $q$-ary tree of depth $d$ with $q \in \{3,4\}$ and for some $\varepsilon \in [1-\log_q(2), 3]$, we have that $
\Phi_V(T) = d - \log_q(d) + \varepsilon.$
\end{theorem}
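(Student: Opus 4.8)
The statement is the conjunction of an upper bound $\Phi_V(T)\le d-\log_q d+3$ and a lower bound $\Phi_V(T)\ge d-\log_q d+1-\log_q 2$, and I would establish these by separate arguments. Throughout write $n_j=(q^{j+1}-1)/(q-1)$ for the order of a complete $q$-ary tree of depth $j$, note the identity $q\,n_j=n_{j+1}-1$, and use the reformulation $\Phi_V(T,s)=\min\{|B|:\ s\text{ is a sum of sizes of connected components of }T\setminus B\}$ together with $\Phi_V(T)=\max_s\Phi_V(T,s)$; this recasts the problem as: how few vertices must be removed so that $s$ becomes a subset-sum of the resulting component sizes?

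For the \emph{upper bound} I would invoke the construction of Section~\ref{sec:upperbounds}: for any target $s$ one interleaves entire subtrees (which cost nothing) near the top with a bounded-cost recursive correction for the residual, together with, if $s$ lies within $d-\log_q d+3$ of $0$ or of $n_d$, at most that many isolated vertices grouped under common parents so that the border is at most their number. The point is that the bulk of any $s<n_d$ is absorbed into full subtrees in the top $\approx\log_q d$ levels at zero cost, and a careful accounting shows that the cuts actually used number at most $d-\log_q d+3$ for every $s$ — the $\log_q d$ improvement over the naive bound $d$ being precisely the content of the upper-bound analysis.

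For the \emph{lower bound} I would fix a hard volume $s^\star$ — roughly $\lfloor n_d/2\rfloor$, adjusted by an explicit bounded term so that the mixed-radix expansion of $s^\star$ in the ``digits'' $n_{d-1},n_{d-2},\dots$ stays inefficient for as long as possible — and show that \emph{every} set of size $s^\star$ has border at least $d-\log_q d+1-\log_q 2$. Here the compression lemmas do the work: left compression (moving the set into the leftmost subtrees of a fixed sibling order), down compression (pushing the set toward the leaves whenever this does not raise the border), and the aeolian compression (sliding mass along a root-path to consolidate two partial subtrees into one) together reduce an arbitrary optimal set of size $s^\star$ to a canonical recursive form — a nested chain of full subtrees with at most one partial subtree at each vertex along a single path, with controlled behaviour at the root and at the subtree-roots. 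For canonical sets the border is a computable function of the expansion of the size; evaluating it at $s^\star$ yields the bound, the $\log_q 2$ slack being exactly the few ($\approx\log_q 2$) ``free'' top levels available when the size is near $n_d/2$ and the $+1$ absorbing the parity offsets coming from $q\,n_j=n_{j+1}-1$ and from whether the set contains the relevant subtree-roots.

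The main obstacle is this compression step. One must define left, down, and aeolian compressions as explicit operations on vertex sets, verify that each preserves $|S|$ and never increases $|\delta(S)|$, and prove that iterating them terminates at a canonical set. The delicate issue — already visible in small cases, where \emph{excluding} the root of a subtree can be strictly cheaper than including it, since that root then becomes a border vertex for free — is that the compressions and the canonical form must be aware of which subtree-roots lie in $S$; the naive ``push everything toward the root'' normalization is therefore unavailable, and this root-avoidance phenomenon (available when $q\le 4$ but not when $q\ge 5$) is ultimately responsible for the $-\log_q d$ correction. Once canonical form is in hand, both halves reduce to elementary estimates with the $n_j$; the one remaining subtlety is tracking the additive constant closely enough to land inside $[1-\log_q 2,\,3]$, which is why the theorem pins the value only up to this bounded window.
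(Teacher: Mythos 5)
Your outline does follow the same broad strategy as the paper (a compression-to-canonical-form argument at a carefully chosen volume for the lower bound, an explicit construction with an exchange for the upper bound), but as written it defers exactly the steps that carry the theorem's content, and the one mechanism you do describe for the upper bound is not the one that works. Absorbing the bulk of $s$ into full subtrees near the top is free in the naive greedy construction as well, and that construction still incurs roughly one border vertex per level all the way down, i.e.\ only the trivial bound $d$; nothing in ``careful accounting shows the cuts number at most $d-\log_q d+3$'' explains where $\log_q d$ border vertices are saved. In the paper the saving comes from the \emph{bottom} of the tree: one truncates the greedy decomposition about $\lfloor\log_q d\rfloor$ levels above the leaves, deletes the residual bottom subtree (of size $t_{d-\alpha-1}\le (d-1)/2$ with $\alpha=\lfloor\log_q d\rfloor-2$), and pays for the missing volume by inserting path vertices $u_i$ and children $u_i^{(1)}$ that were already border vertices; the exchange is feasible only because, under the contradiction hypothesis, $d-2\alpha-1\ge t_{d-\alpha-1}$, and a case analysis of how $\delta(\overline S)$ changes is needed to keep the border at most $d-\alpha$. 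Your sketch contains neither this exchange nor the counting that makes it possible, so the upper bound is asserted rather than proved.

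On the lower bound your plan matches the paper in spirit, but the two quantitatively decisive ingredients are missing or misstated. First, the hard volume is not ``roughly $\lfloor n_d/2\rfloor$ adjusted by a bounded term'': the paper takes $|S|=\sum_{i=1}^{D}(t_i-1)$ with $D=d-\lceil\log_q(2d)\rceil$ (for $q=4$ this is close to a \emph{third} of the tree, and even for $q=3$ the deviation from $n_d/2$ is $\Theta(d)$); what matters is that this size forces an aeolian-compressed set to place one full-child-subtree-minus-root at every level of a root-to-leaf path down to level $D$, each level contributing a border vertex. Second, the slack is not ``$\approx\log_q 2$ free top levels'': it is the $2D\le 2d$ optional path vertices $u_i,u_i^{(1)}$ (the root-avoidance flexibility you correctly identify at the end), and since these can absorb an entire bottom subtree of depth about $\log_q(2d)$, the pigeonhole argument cannot be pushed below level $D$ --- that is precisely where the $-\log_q d$ loss comes from, not from a constant number of levels. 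Finally, the whole compression apparatus you invoke (that treeswaps, exchanges, and aeolian moves never increase the border, that the process terminates, and above all the structure lemma that in a left-, down-, and aeolian-compressed set a left sibling subtree meeting $S$ alongside a right one must be entirely contained in $S$ except possibly its root, i.e.\ the analogue of Lemma~\ref{lem:uv_u_leaves_filled}) is acknowledged as ``the main obstacle'' but not supplied; that is the bulk of the proof, so as it stands the proposal is a plausible programme with genuine gaps rather than a proof.
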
 

\begin{theorem} \label{thm:both_q2}
For $T$ a complete binary tree of depth $d$,
\[ \frac{d - \log_2(d)-3-\log_2(3)}{2} \leq \Phi_V(T) \leq  \lceil d/2 \rceil. \]
\end{theorem}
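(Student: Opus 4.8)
The plan is to prove the two inequalities separately, using the compression machinery the paper develops for the lower bound and a direct construction for the upper bound.

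For the upper bound $\Phi_V(T) \le \lceil d/2 \rceil$, I would exhibit, for every volume $s$, a set $S$ with $|S| = s$ whose border has size at most $\lceil d/2 \rceil$. The natural candidates are ``initial segments'' in a suitable linear order on the vertices, or more precisely nested sets built greedily level by level. The key structural observation is that in a binary tree, a set $S$ that is downward closed within each of a collection of root-to-some-level subtrees has a border consisting only of vertices lying on a small number of root-to-leaf-ish paths; one then bounds the number of such ``frontier'' vertices by roughly $d/2$ by balancing the depth at which one switches from ``take the whole subtree'' to ``take nothing.'' Concretely, I would take $S$ to be a union of full subtrees hung off a single path from the root, choosing which child-subtrees to include so that at each of the $d$ levels at most one new border vertex is created and in fact a border vertex at level $i$ can be ``paid for'' by skipping the creation of one at level $i+1$; summing gives $\lceil d/2\rceil$. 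This direction should be routine once the right family of sets is identified.

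The lower bound is the substantive part. I would fix the volume $s$ that is hardest to separate --- heuristically $s$ near half the number of vertices, or more cleverly a volume tuned so that compression cannot shrink the border below the target --- and show $\Phi_V(T,s) \ge (d - \log_2 d - 3 - \log_2 3)/2$. The strategy is: start from an arbitrary optimal $S$ and repeatedly apply the left, down, and aeolian compressions introduced in the paper to reach a ``canonical'' configuration in which $S$ is, within each subtree, an initial segment in a fixed order, and then analyze the border of such canonical sets directly. After compression, the border is controlled by a recursion on subtrees: if the root's $q=2$ subtrees receive volumes $s_1, s_2$ with $s_1 + s_2 = s - 1$ (accounting for the root), then $|\delta(S)|$ is at least the sum of the borders inside the two subtrees, possibly $+1$ if the root itself is a border vertex. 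Solving this recursion, the worst case forces the volume to split roughly evenly for about $\log_2 d$ levels before it can become lopsided, contributing the $-\log_2 d$ term, and each level of roughly-even splitting contributes $1/2$ to the border on average, giving the leading $d/2$; the additive constants $3$ and $\log_2 3$ come from the base cases of the recursion and from slack in the compression arguments.

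The main obstacle will be showing that the three compressions can be applied in sequence without increasing the border and that they terminate at a genuinely canonical form --- in particular that ``aeolian'' compression interacts correctly with left and down compression so that no infinite loop occurs and the final set really is an initial segment in the relevant order. A secondary difficulty is choosing the extremal volume $s$ precisely enough that the recursion yields the stated constant rather than something weaker; this likely requires tracking, for each level, the exact volume interval on which the per-level border contribution is forced, and optimizing the point at which the even/uneven split transition happens. I expect the bulk of the work, and the source of the $\log_2 3$ and the $3$, to live in this recursion-optimization step, while the compression lemmas themselves will be stated and proved as standalone tools earlier in the paper and invoked here.
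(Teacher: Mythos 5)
Your upper bound route (a direct construction of a set of every volume $s$ with border at most $\lceil d/2\rceil$) differs from the paper, which simply combines the folklore inequality $\Phi_V(G)\le \vs(G)=\pw(G)$ with the known pathwidth $\lceil d/2\rceil$ of the complete binary tree; your sketch is essentially re-deriving the optimal layout and could be made to work, but as written it is only a plan. The real problem is the lower bound, where your proposal has concrete gaps. First, the extremal volume is not ``near half the number of vertices'': for $s\approx 2^{d}$ one may take the entire left subtree together with the root, whose border is a single vertex, so $\Phi_V(T,s)=1$ there. The peak is forced only at a very particular volume; the paper takes $|S|=\sum_{i=1}^{D}(t_{2i}-1)$ with $D=\lfloor (d-\lceil\log_2(3d)\rceil)/2\rfloor$, i.e., one full subtree (minus its root) hung off a distinguished path every \emph{second} level, and the $\log_2 d$ term and the constants come from the slack condition $t_{2D}\ge 3D+3$ needed to absorb the $3$ optional path vertices per step --- not from ``base cases of a recursion.'' Identifying this volume is the heart of the argument, and your proposal leaves it unspecified.

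Second, your analysis mechanism does not work as described. The compressions do not produce ``initial segments of a fixed order within each subtree''; an aeolian-compressed set is a union of full subtrees minus their roots hanging off a path (often disconnected), and the usable structural fact is Lemma~\ref{lem:uv_u_leaves_filled}: if $\Desc(v)$ meets $S$ and $u$ is to the left of $v$, then $\Desc(u)\setminus\{u\}\subseteq S$. More importantly, the subtree recursion you propose cannot force anything: the inequality $|\delta(S)|\ge |\delta(S_1)|+|\delta(S_2)|$ (plus possibly the root) is valid, but the minimizer is free to split the volume as lopsidedly as it likes --- filling one child subtree completely costs no internal border --- so ``the worst case forces the volume to split roughly evenly for about $\log_2 d$ levels'' is false, and ``each level of roughly even splitting contributes $1/2$ on average'' has no mechanism behind it; to extract anything from that recursion you would need lower bounds on the entire isoperimetric profile of the subtrees, which is essentially the original problem. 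The actual source of the factor $1/2$ is specific to $q=2$: along the path that alternates $u_{2i+1}=u_{2i}^{(1)}$, $u_{2i+2}=u_{2i+1}^{(2)}$, the paper's two-level induction (property $P(i)$) shows that each pair of consecutive levels forces at least one border vertex among $\{u_{2i+1}^{(1)},u_{2i+1},u_{2i},u_{2i}^{(2)}\}$, giving $D-1$ border vertices plus one more inside $\Desc(u_{2D-2})$. Without a concrete extremal volume and this per-two-levels forcing argument, your plan does not reach the stated bound.
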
 

We will also explore how the vertex-isoperimetric parameter is related to a number of other well-studied graph parameters, such as the vertex separation number and pathwidth. In particular, we show that contrary to intuition, the vertex separation number and isoperimetric peak can be arbitrarily far apart. See Sections~\ref{sec:upperbounds} and \ref{sec:applications} for discussion.

We finish the introduction with notation. All graphs considered are simple, undirected, and finite. For background on graph theory, the reader is directed to \cite{west}. Let $T$ be a complete $q$-ary tree of depth $s$. The vertices in $T$ of distance $d$ from the root have degree $1$ and are referred to as \emph{leaves} of $T$. For two adjacent vertices $v$ and $w$ with $v$ closer to the root than $w$,  $v$ is the \emph{parent} of $w$, and $w$ is a \emph{child} of $v$. The parent of $w$ is denoted $\Pa(w)$, and the set of all children of $u$ is denoted $\children(u)$. A vertex $w$ is a \emph{descendant} of a vertex $v$ when the unique path from $w$ to the root contains $v$. The set of descendants of $v$ is written as $\Desc(v)$.  Note in particular that $v$ is its own descendant, and so $v \in \Desc(v)$.
For $q \geq 2$, the cardinality of a complete $q$-ary tree of depth $d$ can be calculated by the geometric sum $t_d = \frac{q^{d+1}-1}{q-1}$.
Two vertices $u,v$ are on the same \emph{level} $L_i$ of $T$ if they have the same distance, $i$, to the root vertex. 

We can order the vertices $V(T)$ using a particular breadth-first transversal, and we say that vertex $u$ is to the \emph{left} of $v$ if $u$ and $v$ are on the same level, and $u$ comes before $v$ in this ordering. The terminology of \emph{right}, \emph{left-most}, and \emph{right-most} follow in the obvious fashion. Under this ordering, the root vertex of $V(T)$ is the smallest in the ordering, a vertex in $L_i$ is smaller in the ordering than a vertex in $L_j$ when $i < j$, and if $u$ and $v$ are in the same level with $u$ before $v$ in the ordering, then all the children of $u$ are smaller in the ordering than the children of $v$. 

\section{Compressions on $q$-ary Trees}

Compressions have been a useful tool in the study of isoperimetry on graphs \cite{MR1455181, MR1444247,MR1082842,MR2035509, TriGrids}. 
We begin our discussion on compressions with an illustrative example, which is presented visually in Figure~\ref{fig:12compression}. 
For the square grid $[k]^2$, define a \emph{$1$-compression} on a $S\subseteq V$ as the subset $S'$ derived from $S$ by recursively finding any $x$ and $y$ such that $(x,y)\in S$ and $(x-1,y) \notin S$, and then removing $(x,y)$ from $S$ and replacing it with $(x-1,y)$. 
A \emph{$2$-compression} works similarly, but where $(x,y-1)$ is used in place of $(x-1,y)$.  

\begin{figure}[h]
    \centering
    \begin{tikzpicture}
    \node[fill=black, circle, inner sep=2pt] (v0) at (0,0) {};
    \node[draw=black, fill=white, circle, inner sep=2pt] (v1) at (1, 0) {};
    \node[fill=black, circle, inner sep=2pt] (v2) at (2, 0) {};
    \node[draw=black, fill=white, circle, inner sep=2pt] (v3) at (0, 1) {};
    \node[draw=black, fill=white, circle, inner sep=2pt] (v4) at (1, 1) {};
    \node[fill=black, circle, inner sep=2pt] (v5) at (2, 1) {};
    \node[fill=black, circle, inner sep=2pt] (v6) at (0, 2) {};
    \node[draw=black, fill=white, circle, inner sep=2pt] (v7) at (1, 2) {};
    \node[draw=black, fill=white, circle, inner sep=2pt] (v8) at (2, 2) {};
    \draw (v0) -- (v1);
    \draw (v0) -- (v3);
    \draw (v1) -- (v2);
    \draw (v1) -- (v4);
    \draw (v2) -- (v5);
    \draw (v3) -- (v4);
    \draw (v3) -- (v6);
    \draw (v4) -- (v5);
    \draw (v4) -- (v7);
    \draw (v5) -- (v8);
    \draw (v6) -- (v7);
    \draw (v7) -- (v8);

    \draw[->] (3,1) -- (4,1);

    \node[fill=black, circle, inner sep=2pt] (w0) at (5,0) {};
    \node[draw=black, fill=white, circle, inner sep=2pt] (w1) at (6, 0) {};
    \node[fill=black, circle, inner sep=2pt] (w2) at (7, 0) {};
    \node[fill=black, circle, inner sep=2pt] (w3) at (5, 1) {};
    \node[draw=black, fill=white, circle, inner sep=2pt] (w4) at (6, 1) {};
    \node[fill=black, circle, inner sep=2pt] (w5) at (7, 1) {};
    \node[draw=black, fill=white, circle, inner sep=2pt] (w6) at (5, 2) {};
    \node[draw=black, fill=white, circle, inner sep=2pt] (w7) at (6, 2) {};
    \node[draw=black, fill=white, circle, inner sep=2pt] (w8) at (7, 2) {};
    \draw (w0) -- (w1);
    \draw (w0) -- (w3);
    \draw (w1) -- (w2);
    \draw (w1) -- (w4);
    \draw (w2) -- (w5);
    \draw (w3) -- (w4);
    \draw (w3) -- (w6);
    \draw (w4) -- (w5);
    \draw (w4) -- (w7);
    \draw (w5) -- (w8);
    \draw (w6) -- (w7);
    \draw (w7) -- (w8);
    
    \draw[->] (8,1) -- (9,1);

    \node[fill=black, circle, inner sep=2pt] (w0) at (10,0) {};
    \node[fill=black, circle, inner sep=2pt] (w1) at (11, 0) {};
    \node[draw=black, fill=white, circle, inner sep=2pt] (w2) at (12, 0) {};
    \node[fill=black, circle, inner sep=2pt] (w3) at (10, 1) {};
    \node[fill=black, circle, inner sep=2pt] (w4) at (11, 1) {};
    \node[draw=black, fill=white, circle, inner sep=2pt] (w5) at (12, 1) {};
    \node[draw=black, fill=white, circle, inner sep=2pt] (w6) at (10, 2) {};
    \node[draw=black, fill=white, circle, inner sep=2pt] (w7) at (11, 2) {};
    \node[draw=black, fill=white, circle, inner sep=2pt] (w8) at (12, 2) {};
    \draw (w0) -- (w1);
    \draw (w0) -- (w3);
    \draw (w1) -- (w2);
    \draw (w1) -- (w4);
    \draw (w2) -- (w5);
    \draw (w3) -- (w4);
    \draw (w3) -- (w6);
    \draw (w4) -- (w5);
    \draw (w4) -- (w7);
    \draw (w5) -- (w8);
    \draw (w6) -- (w7);
    \draw (w7) -- (w8);
    
\end{tikzpicture}
    \caption{An example of a $1$-compression and then a $2$-compression on a subset of vertices in the square grid, where the subset of vertices are the four vertices represented by filled vertices.}
\label{fig:12compression}
\end{figure}
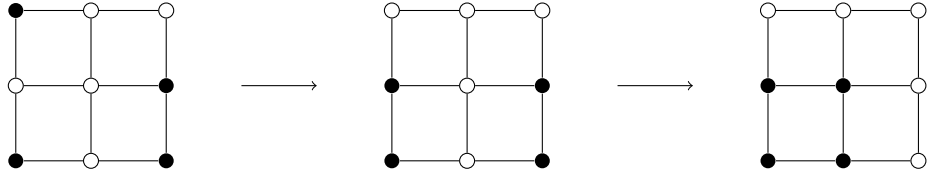

These compressions were defined in greater generality by Bollob\'{a}s and Leader \cite{MR1082842}, who showed that in a number of graph classes, applying a number of compressions to a subset $S$ of the vertices could be compressed to a set $S'$ with $|S'|=|S|$ and where $|\delta(S')| \leq |\delta(S)|$. 
By recursively applying different compressions, it is possible to find a set $S'$ with $|S'| = |S|$ and where $|\delta(S')|=\Phi_V(G,|S|)$. 

The kind of compressions used previously cannot be applied to trees; however, we will use a natural generalization of this terminology. 
We say $S'\subseteq V(T)$ is a \emph{compression} of $S \subseteq V(T)$ if $|S'|=|S|$ and $|\delta(S')| \leq |\delta(S)|$. 
For a set $X$, let $\mathcal{P}(X)$ be the power set of $X$. 
A function $f:\mathcal{P}(V) \rightarrow \mathcal{P}(V)$ is called a \emph{compression function} if, for all $S\in \mathcal{P}(V)$, $f(S)$ is a compression of $S$, and where recursively applying $f$ to $S$ can only yield $S$ if $f(S)=S$. 
Given a compression function $f$ and a subset of vertices $S\subseteq V$, we say $S$ is \emph{$f$-compressed} if $f(S) = S$.

\subsection{Left-compressions}

In this subsection, we will define mappings that `swap' certain subtrees within a subset of vertices $S$, which we will show are compressions. 
Recursively applying these compressions will yield a new subset of vertices with the same number of vertices per level of the tree as in $S$, but where all the vertices are pushed as far `left' as possible. 
See Figure~\ref{fig:left_compression_example} for an example. 

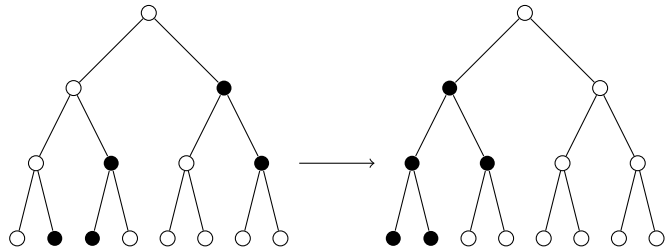
\begin{figure}[h]
    \centering
    \begin{tikzpicture}
    \node[draw=black, fill=white, circle, inner sep=2pt] (u) at (0, 0) {};
    \node[draw=black, fill=white, circle, inner sep=2pt] (u0) at (-1, -1) {};
    \node[fill=black, circle, inner sep=2pt] (u1) at (1, -1) {};
    \node[draw=black, fill=white, circle, inner sep=2pt] (u00) at (-1.5, -2) {};
    \node[fill=black, circle, inner sep=2pt] (u01) at (-0.5, -2) {};
    \node[draw=black, fill=white, circle, inner sep=2pt] (u10) at (0.5, -2) {};
    \node[fill=black, circle, inner sep=2pt] (u11) at (1.5, -2) {};
    \node[draw=black, fill=white, circle, inner sep=2pt] (u000) at (-1.75, -3) {};
    \node[fill=black, circle, inner sep=2pt] (u001) at (-1.25, -3) {};
    \node[fill=black, circle, inner sep=2pt] (u010) at (-0.75, -3) {};
    \node[draw=black, fill=white, circle, inner sep=2pt] (u011) at (-0.25, -3) {};
    \node[draw=black, fill=white, circle, inner sep=2pt] (u100) at (0.25, -3) {};
    \node[draw=black, fill=white, circle, inner sep=2pt] (u101) at (0.75, -3) {};
    \node[draw=black, fill=white, circle, inner sep=2pt] (u110) at (1.25, -3) {};
    \node[draw=black, fill=white, circle, inner sep=2pt] (u111) at (1.75, -3) {};
    \draw (u) -- (u0);
    \draw (u) -- (u1);
    \draw (u0) -- (u00);
    \draw (u0) -- (u01);
    \draw (u1) -- (u10);
    \draw (u1) -- (u11);
    \draw (u00) -- (u000);
    \draw (u00) -- (u001);
    \draw (u01) -- (u010);
    \draw (u01) -- (u011);
    \draw (u10) -- (u100);
    \draw (u10) -- (u101);
    \draw (u11) -- (u110);
    \draw (u11) -- (u111);

    \draw[->] (2,-2) -- (3,-2);
    
    \node[draw=black, fill=white, circle, inner sep=2pt] (u) at (5, 0) {};
    \node[fill=black, circle, inner sep=2pt] (u0) at (4, -1) {};
    \node[draw=black, fill=white, circle, inner sep=2pt] (u1) at (6, -1) {};
    \node[fill=black, circle, inner sep=2pt] (u00) at (3.5, -2) {};
    \node[fill=black, circle, inner sep=2pt] (u01) at (4.5, -2) {};
    \node[draw=black, fill=white, circle, inner sep=2pt] (u10) at (5.5, -2) {};
    \node[draw=black, fill=white, circle, inner sep=2pt] (u11) at (6.5, -2) {};
    \node[fill=black, circle, inner sep=2pt] (u000) at (3.25, -3) {};
    \node[fill=black, circle, inner sep=2pt] (u001) at (3.75, -3) {};
    \node[draw=black, fill=white, circle, inner sep=2pt] (u010) at (4.25, -3) {};
    \node[draw=black, fill=white, circle, inner sep=2pt] (u011) at (4.75, -3) {};
    \node[draw=black, fill=white, circle, inner sep=2pt] (u100) at (5.25, -3) {};
    \node[draw=black, fill=white, circle, inner sep=2pt] (u101) at (5.75, -3) {};
    \node[draw=black, fill=white, circle, inner sep=2pt] (u110) at (6.25, -3) {};
    \node[draw=black, fill=white, circle, inner sep=2pt] (u111) at (6.75, -3) {};
    \draw (u) -- (u0);
    \draw (u) -- (u1);
    \draw (u0) -- (u00);
    \draw (u0) -- (u01);
    \draw (u1) -- (u10);
    \draw (u1) -- (u11);
    \draw (u00) -- (u000);
    \draw (u00) -- (u001);
    \draw (u01) -- (u010);
    \draw (u01) -- (u011);
    \draw (u10) -- (u100);
    \draw (u10) -- (u101);
    \draw (u11) -- (u110);
    \draw (u11) -- (u111);
\end{tikzpicture}
    \caption{An example of the action of the left-compression on a subset of vertices in a complete binary tree.}
\label{fig:left_compression_example}
\end{figure}

 Two vertices $u,v \in V(T)$ are \emph{swappable} in $S$ if $u$ to the left of $v$, and either:
 \begin{enumerate}
     \item $u\notin S$ and $v \in S$;  or 
     \item $u,v \notin S$, $\children(u)\cap S = \emptyset$, and $\children(v)\cap S \neq \emptyset$.
 \end{enumerate} 
 We say level $i$ of $S$ is \emph{left-ordered} if $S$ does not contain a swappable pair in level $i$. 
 Further, we say that $S$ itself is \emph{left-ordered} if each of its levels is left-ordered. 
 We note here that condition (2) could be dropped, and the left-ordered definitions would remain unchanged; however, we require condition (2) to ensure that the mappings we perform will be compressions. 

Write the vertices in $\Desc(u)$ and $\Desc(v)$ as $(u^{(1)}, u^{(2)}, \ldots)$ and $(v^{(1)}, v^{(2)}, \ldots)$, respectively, where we assume than $u^{(i)}$ comes before $u^{(j)}$ in the breadth-first ordering for $i<j$, and likewise for $v^{(i)}$ and $v^{(j)}$. 
Define the \emph{treeswap of $S$ between $u$ and $v$} as the subset of vertices $S'$, where:
\begin{enumerate}
    \item $x \in S'$ if $x \in V(T)\setminus (\Desc(u) \cup \Desc(v))$; 
    \item $v^{(i)} \in S'$ if $u^{(i)} \in S$; and
    \item $u^{(i)} \in S'$ if $v^{(i)} \in S$. 
\end{enumerate}
See Figure~\ref{fig:treeswap} for an illustration of a treeswap. 

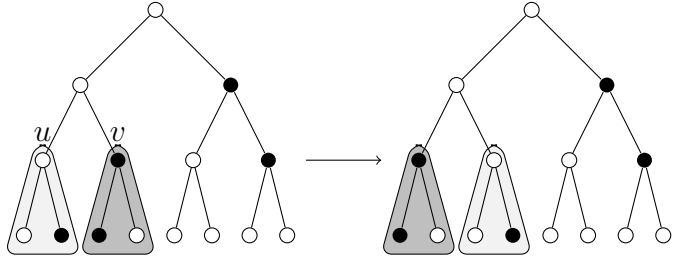
\begin{figure}[h]
    \centering
    \begin{tikzpicture}

    \draw [rounded corners,fill=gray!10] (-2,-3.25)--(-1,-3.25)--(-1.4,-1.8)--(-1.6,-1.8)--cycle;
    \draw [rounded corners,fill=gray!50] (-1,-3.25)--(0,-3.25)--(-0.4,-1.8)--(-0.6,-1.8)--cycle;
    \draw [rounded corners,fill=gray!50] (3,-3.25)--(4,-3.25)--(3.6,-1.8)--(3.4,-1.8)--cycle;
    \draw [rounded corners,fill=gray!10] (4,-3.25)--(5,-3.25)--(4.6,-1.8)--(4.4,-1.8)--cycle;
    
    \node[draw=black, fill=white, circle, inner sep=2pt] (u) at (0, 0) {};
    \node[draw=black, fill=white, circle, inner sep=2pt] (u0) at (-1, -1) {};
    \node[fill=black, circle, inner sep=2pt] (u1) at (1, -1) {};
    \node[draw=black, fill=white, circle, inner sep=2pt, label=above:$u$] (u00) at (-1.5, -2) {};
    \node[fill=black, circle, inner sep=2pt, label=above:$v$] (u01) at (-0.5, -2) {};
    \node[draw=black, fill=white, circle, inner sep=2pt] (u10) at (0.5, -2) {};
    \node[fill=black, circle, inner sep=2pt] (u11) at (1.5, -2) {};
    \node[draw=black, fill=white, circle, inner sep=2pt] (u000) at (-1.75, -3) {};
    \node[fill=black, circle, inner sep=2pt] (u001) at (-1.25, -3) {};
    \node[fill=black, circle, inner sep=2pt] (u010) at (-0.75, -3) {};
    \node[draw=black, fill=white, circle, inner sep=2pt] (u011) at (-0.25, -3) {};
    \node[draw=black, fill=white, circle, inner sep=2pt] (u100) at (0.25, -3) {};
    \node[draw=black, fill=white, circle, inner sep=2pt] (u101) at (0.75, -3) {};
    \node[draw=black, fill=white, circle, inner sep=2pt] (u110) at (1.25, -3) {};
    \node[draw=black, fill=white, circle, inner sep=2pt] (u111) at (1.75, -3) {};
    \draw (u) -- (u0);
    \draw (u) -- (u1);
    \draw (u0) -- (u00);
    \draw (u0) -- (u01);
    \draw (u1) -- (u10);
    \draw (u1) -- (u11);
    \draw (u00) -- (u000);
    \draw (u00) -- (u001);
    \draw (u01) -- (u010);
    \draw (u01) -- (u011);
    \draw (u10) -- (u100);
    \draw (u10) -- (u101);
    \draw (u11) -- (u110);
    \draw (u11) -- (u111);

    \draw[->] (2,-2) -- (3,-2);

    \node[draw=black, fill=white, circle, inner sep=2pt] (u) at (5, 0) {};
    \node[draw=black, fill=white, circle, inner sep=2pt] (u0) at (4, -1) {};
    \node[fill=black, circle, inner sep=2pt] (u1) at (6, -1) {};
    \node[fill=black, circle, inner sep=2pt] (u00) at (3.5, -2) {};
    \node[draw=black, fill=white, circle, inner sep=2pt] (u01) at (4.5, -2) {};
    \node[draw=black, fill=white, circle, inner sep=2pt] (u10) at (5.5, -2) {};
    \node[fill=black, circle, inner sep=2pt] (u11) at (6.5, -2) {};
    \node[fill=black, circle, inner sep=2pt] (u000) at (3.25, -3) {};
    \node[draw=black, fill=white, circle, inner sep=2pt] (u001) at (3.75, -3) {};
    \node[draw=black, fill=white, circle, inner sep=2pt] (u010) at (4.25, -3) {};
    \node[fill=black, circle, inner sep=2pt] (u011) at (4.75, -3) {};
    \node[draw=black, fill=white, circle, inner sep=2pt] (u100) at (5.25, -3) {};
    \node[draw=black, fill=white, circle, inner sep=2pt] (u101) at (5.75, -3) {};
    \node[draw=black, fill=white, circle, inner sep=2pt] (u110) at (6.25, -3) {};
    \node[draw=black, fill=white, circle, inner sep=2pt] (u111) at (6.75, -3) {};
    \draw (u) -- (u0);
    \draw (u) -- (u1);
    \draw (u0) -- (u00);
    \draw (u0) -- (u01);
    \draw (u1) -- (u10);
    \draw (u1) -- (u11);
    \draw (u00) -- (u000);
    \draw (u00) -- (u001);
    \draw (u01) -- (u010);
    \draw (u01) -- (u011);
    \draw (u10) -- (u100);
    \draw (u10) -- (u101);
    \draw (u11) -- (u110);
    \draw (u11) -- (u111);
\end{tikzpicture}
    \caption{An example of a treeswap of $S$ between vertices $u$ and $v$, where $S$ is the set of filled vertices.}
    \label{fig:treeswap}
\end{figure}

\begin{lemma} \label{lem:swappable_for_compression}
Let $u,v\in L_i$ be swappable, and suppose each level before level $i$ is left-ordered. 
We then have that the treeswap of $S$ between $u$ and $v$, $S'$ is a compression of $S$. 
\end{lemma}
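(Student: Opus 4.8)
The two requirements for $S'$ to be a compression of $S$ are $|S'| = |S|$ and $|\delta(S')| \le |\delta(S)|$; the first is immediate. Write $A = V(T)\setminus(\Desc(u)\cup\Desc(v))$, and let $\phi\colon\Desc(u)\to\Desc(v)$ be the natural rooted-subtree isomorphism sending the $j$-th vertex of $\Desc(u)$ in breadth-first order to the $j$-th vertex of $\Desc(v)$ (so $\phi$ carries $\children(u)$ onto $\children(v)$, and so on). By definition the treeswap acts as the identity on $A$ and satisfies $S'\cap\Desc(u) = \phi^{-1}(S\cap\Desc(v))$ and $S'\cap\Desc(v) = \phi(S\cap\Desc(u))$, so $|S'| = |S\cap A| + |S\cap\Desc(v)| + |S\cap\Desc(u)| = |S|$.

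For the border, I would decompose $\delta(S)$ into the disjoint pieces $\delta(S)\cap A$, $\{u\}\cap\delta(S)$, $\{v\}\cap\delta(S)$, $\delta(S)\cap(\Desc(u)\setminus\{u\})$ and $\delta(S)\cap(\Desc(v)\setminus\{v\})$. Note first that since $u,v\in L_i$ form a swappable pair we have $i\ge 1$, so $\Pa(u),\Pa(v)$ exist; as they lie in $L_{i-1}$ they are distinct from $u,v$ and from every vertex of $\Desc(u)\cup\Desc(v)$. Every non-root vertex $w$ of the subtree $T_u := T[\Desc(u)]$ has all of its $T$-neighbours inside $T_u$, so whether $w\in\delta(S)$ depends only on $S\cap\Desc(u)$ viewed as a configuration in $T_u$; because $\phi$ preserves this dependence, $|\delta(S)\cap(\Desc(u)\setminus\{u\})| = |\delta(S')\cap(\Desc(v)\setminus\{v\})|$ and, symmetrically, $|\delta(S)\cap(\Desc(v)\setminus\{v\})| = |\delta(S')\cap(\Desc(u)\setminus\{u\})|$, i.e.\ the two interior pieces are merely exchanged. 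Moreover $S$ and $S'$ agree on $A$, and the only vertices of $A$ possessing a neighbour in $\Desc(u)\cup\Desc(v)$ are $\Pa(u)$ and $\Pa(v)$, so $\delta(S)\cap A$ and $\delta(S')\cap A$ can differ only through the membership of these two vertices. Consequently $|\delta(S)| - |\delta(S')|$ equals the sum of $\mathbf{1}[\Pa(u)\in\delta(S)]-\mathbf{1}[\Pa(u)\in\delta(S')]$, $\mathbf{1}[\Pa(v)\in\delta(S)]-\mathbf{1}[\Pa(v)\in\delta(S')]$ (these two collapsed into a single term when $\Pa(u)=\Pa(v)$), $\mathbf{1}[u\in\delta(S)]-\mathbf{1}[u\in\delta(S')]$ and $\mathbf{1}[v\in\delta(S)]-\mathbf{1}[v\in\delta(S')]$; it remains to show this quantity is non-negative.

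I would finish by a finite case check on the at most four vertices $u,v,\Pa(u),\Pa(v)$. Split first according to which clause of ``swappable'' holds for $(u,v)$ — clause (1): $u\notin S$, $v\in S$; or clause (2): $u,v\notin S$ with $\children(u)\cap S=\emptyset$ and $\children(v)\cap S\neq\emptyset$ — and then according to whether $\Pa(u)=\Pa(v)$. In each branch one expands the four indicators: membership of $u$ (resp.\ $v$) in $\delta$ is governed by $\Pa(u)$ (resp.\ $\Pa(v)$) together with $\children(u)$ (resp.\ $\children(v)$), and on the $S'$-side the children test is translated via $\phi$ into a statement about $\children(v)$ (resp.\ $\children(u)$) in $S$; membership of $\Pa(u)$ (resp.\ $\Pa(v)$) in $\delta$ is unaffected except through the status of its child $u$ (resp.\ $v$). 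When $\Pa(u)\neq\Pa(v)$ one additionally uses that $\Pa(u)$ lies strictly to the left of $\Pa(v)$ — forced by $u$ being to the left of $v$ and the defining property of the breadth-first ordering — and that $L_{i-1}$ is left-ordered by hypothesis, so $(\Pa(u),\Pa(v))$ is not a swappable pair; negating the swappability clauses for $(\Pa(u),\Pa(v))$ supplies exactly the Boolean facts needed (namely $\Pa(v)\in S\Rightarrow\Pa(u)\in S$, and, in the branch where $v\in S$, a statement forcing $\Pa(u)\in S$ or $\children(\Pa(u))\cap S\neq\emptyset$) to make the sum non-negative. The only genuine difficulty is organisational bookkeeping: keeping the branches straight and verifying that each invokes the left-orderedness of $L_{i-1}$ correctly; there is no analytic content, and indeed in several branches the difference equals $0$ exactly.
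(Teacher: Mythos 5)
Your proposal is correct and follows essentially the same route as the paper's proof: exchange the interior boundary counts of $\Desc(u)\setminus\{u\}$ and $\Desc(v)\setminus\{v\}$ via the subtree isomorphism, observe that outside these subtrees only $\Pa(u)$ and $\Pa(v)$ can change status, and then reduce to a finite case check on $\{u,v,\Pa(u),\Pa(v)\}$ in which left-orderedness of level $i-1$ supplies exactly the facts you name (that $\Pa(v)\in S$ forces $\Pa(u)\in S$, and in the $v\in S$ branch that $\Pa(u)\in S$ or $\children(\Pa(u))\cap S\neq\emptyset$). The case analysis you defer to "bookkeeping" is precisely the paper's Cases 1--4 plus the second swappability clause, and your identified ingredients suffice to complete it.
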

\begin{proof}
We begin by noting that by the definition of the treeswap, 
$S \cap (V(T) \setminus (\Desc(u) \cup \Desc(v)) = S' \cap (V(T) \setminus (\Desc(u) \cup \Desc(v))$. 
Since $\Pa(u)$ and $\Pa(v)$ are the only vertices in $V(T) \setminus (\Desc(u) \cup \Desc(v))$ with a neighbour not in $V(T) \setminus (\Desc(u) \cup \Desc(v))$, we have that $\delta(S) \cap (V(T) \setminus (\Desc(u) \cup \Desc(v) \cup \{\Pa(u),\Pa(v)\})$ equals $\delta(S') \cap (V(T) \setminus (\Desc(u) \cup \Desc(v) \cup \{\Pa(u),\Pa(v)\}).$

Notice that $u^{(i)} \in \Desc(u)$ is adjacent to  $u^{(j)} \in \Desc(u)$ if and only if $v^{(i)}\in \Desc(v)$ is adjacent to  $v^{(j)} \in \Desc(v)$. As such, if we supposed $u^{(i)}\in \Desc{u}\setminus \{u\}$ has $ u^{(i)}\in \delta(S)$, then $u^{(i)}\notin S$ and there is some $u^{(j)}$ adjacent to $u^{(i)}$ with $u^{(j)}\in S$, from which it follows that $v^{(i)}\notin S'$ and $v^{(j)}\in S'$, so that $v^{(i)}\in \delta(S')$. 
As such, $|(\Desc(u) \setminus \{u\}) \cap \delta(S)| = |(\Desc(v) \setminus \{v\}) \cap \delta(S')|$. 
This also holds if we swap the roles of $u$ and $v$, and so we also have that $|(\Desc(v) \setminus \{v\}) \cap \delta(S)| = |(\Desc(u) \setminus \{u\}) \cap \delta(S')|$. 

Hence, there is a bijection between $\delta(S) \setminus \{u,v,\Pa(u),\Pa(v)\}$ and $\delta(S') \setminus \{u,v,\Pa(u),\Pa(v)\}$.
If $|\delta(S')|> |\delta(S)|$, then it must be that $$|\delta(S') \cap \{u,v,\Pa(u),\Pa(v)\}|>|\delta(S) \cap \{u,v,\Pa(u),\Pa(v)\}|.$$
The remaining arguments of this proof will show that this cannot be the case. 
Since $u$ and $v$ are swappable, either $u\notin S$ and $v \in S$, or $u,v \notin S$, $\children(u)\cap S = \emptyset$ and $\children(v)\cap S \neq \emptyset$. 
We begin with the first case that $u\notin S$ and $v \in S$. 
This breaks into four cases. 

\medskip
\noindent \emph{Case 1}: If $\Pa(u),\Pa(v)\in S$, then   
$\delta(S) \cap \{u,v,\Pa(u),\Pa(v)\} = \{u\}$ and 
$\delta(S') \cap \{u,v,\Pa(u),\Pa(v)\} = \{v\}$.
\medskip

\noindent \emph{Case 2}: If $\Pa(u)\in S$ and $\Pa(v)\notin S$, then   
$\delta(S) \cap \{u,v,\Pa(u),\Pa(v)\} = \{u, \Pa(v)\}$ and 
$\delta(S') \cap \{u,v,\Pa(u),\Pa(v)\} \subseteq \{v, \Pa(v)\}$. 

\medskip
\noindent \emph{Case 3}: It cannot be that $\Pa(u)\notin S$ and $\Pa(v)\in S$.
Otherwise, level $i-1$ is not left-ordered as we required. 
\medskip

\noindent \emph{Case 4}: For the final case, suppose we have that $\Pa(u),\Pa(v)\notin S$.
As $\Pa(u),\Pa(v)$ were not a swappable pair since level $i-1$ was left-ordered, it must be that either $\children(\Pa(u)) \cap S \neq \emptyset$ or $\children(\Pa(v)) \cap S = \emptyset$. 
The latter is not true as $v \in \children(\Pa(v))\cap S$, so it must be that $\children(\Pa(u)) \cap S \neq \emptyset$. 
Since $\Pa(u) \notin S$, this gives that $\Pa(u) \in \delta(S)$. 
We also have that $\Pa(v) \in \delta(S)$ since $v\in S$ and $\Pa(v)\notin S$.  
Since $u,\Pa(u), \Pa(v) \notin S$, it follows that $v\in \delta(S')$ if and only if $u \in \delta(S)$. 
Hence, either $\delta(S) \cap \{u,v,\Pa(u),\Pa(v)\} = \{u,\Pa(u),\Pa(v)\}$ and also $$\delta(S') \cap \{u,v,\Pa(u),\Pa(v)\} \subseteq \{v,\Pa(u),\Pa(v)\},$$ or instead we have that 
$\delta(S) \cap \{u,v,\Pa(u),\Pa(v)\} = \{\Pa(u),\Pa(v)\}$ and $\delta(S') \cap \{u,v,\Pa(u),\Pa(v)\} \subseteq \{\Pa(u),\Pa(v)\}$. 
For both of these, $|\delta(S') \cap \{u,v,\Pa(u),\Pa(v)\}| \leq |\delta(S) \cap \{u,v,\Pa(u),\Pa(v)\}|$, and so the last subcase is complete. 

We now assume the second case that $u,v \notin S$, $\children(u)\cap S = \emptyset,$ and $\children(v)\cap S \neq \emptyset$. 
If $\Pa(v)\in S$, then $\Pa(u)\in S$, or else $\Pa(u),\Pa(u)$ was a swappable pair in a left-ordered level, forming a contradiction. 
In this case, $\delta(S) \cap \{u,v,\Pa(u),\Pa(v)\} = \{u,v\} =  \delta(S') \cap \{u,v,\Pa(u),\Pa(v)\}$, and so the required condition holds.  
Therefore, we may assume that $\Pa(v)\notin S$. 
We then have that $v \in \delta(S)$, $u \in \delta(S')$, and $v \notin \delta(S')$. 
As such, $\{v\} \subseteq \delta(S) \cap \{u,v\}$ and $\delta(S') \cap \{u,v\} \subseteq \{u\}$. 
Also, $\delta(S') \cap \{\Pa(u),\Pa(v)\} = \delta(S) \cap \{\Pa(u),\Pa(v)\}$. 
It follows that $|\delta(S') \cap \{u,v,\Pa(u),\Pa(v)\}| \leq |\delta(S) \cap \{u,v,\Pa(u),\Pa(v)\}|$, and so the last case is complete. 
\end{proof}

We define the function $\mathrm{left}:\mathcal{P}(V(T))\rightarrow \mathcal{P}(V(T))$ to be the function that takes a subset of vertices $S$, and performs the treeswap on the left-most swappable pair $u,v\in L_i$ in $S$, where $i$ is such that each level before level $i$ is left-ordered. 
As a consequence of Lemma~\ref{lem:swappable_for_compression}, to show that the function $\mathrm{left}$ is a compression function, we are required to show that the recursive application of the function to some subset of vertices $S$ can only yield $S$ again if $\mathrm{left}(S)=S$. 
Define the strict partial ordering $<$ on subsets of $V(T)$ as $A < B$ when: 
\begin{enumerate}
    \item $A$ and $B$ have the same number of vertices in every level of $T$; and 
    \item there is a vertex $v \in V(T)$ such that $v\in A$, $v\notin B$, and for each vertex $w$ before $v$ in the ordering of the vertices of $T$, either $w$ is in both $A$ and $B$, or is in neither.
\end{enumerate}

\begin{lemma} \label{lem:not_left_compressed_means_swappable}
For a $S\subseteq V(T)$, if there exist a $U\subseteq V(T)$ with $U<S$, then $\mathrm{left}(S)<S$. 
\end{lemma}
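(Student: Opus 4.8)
The plan is to prove the contrapositive-flavoured statement directly: assuming some $U < S$ exists, I will show that $S$ must contain a swappable pair, and hence $\mathrm{left}(S) \ne S$; then I will argue that the treeswap performed by $\mathrm{left}$ strictly decreases $S$ in the partial order $<$. The first half is the heart of the matter. Since $U < S$, there is a witness vertex $v$ with $v \in U$, $v \notin S$, and every vertex strictly before $v$ in the breadth-first order agrees on membership in $U$ and $S$; moreover $U$ and $S$ have the same level-counts. Fix the level $L_i$ containing $v$. Because the two sets agree on all vertices before $v$ (in particular on all of $L_i$ to the left of $v$ and on all earlier levels) and have equal counts on $L_i$, the number of elements of $S$ in $L_i$ that lie at or to the right of $v$ equals the number of elements of $U$ there, and since $v \in U \setminus S$, there must be some vertex $w \in L_i$ strictly to the right of $v$ with $w \in S$ and $w \notin U$. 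Then $v$ is to the left of $w$, $v \notin S$, $w \in S$, so $(v,w)$ satisfies condition (1) of swappability. Hence $S$ has a swappable pair on some level, so $\mathrm{left}(S) \ne S$.

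Next I need to check that $\mathrm{left}$ actually fires on the \emph{first} non-left-ordered level and that this is consistent — i.e., that when $\mathrm{left}(S) = S$ fails, there genuinely is a level $i$ with all earlier levels left-ordered and $L_i$ not left-ordered, which is immediate by taking the least such $i$. On that level, let $(u',v')$ be the left-most swappable pair and let $S' = \mathrm{left}(S)$ be the treeswap between them. I must verify $S' < S$. Property (1) of $<$ (equal level counts) holds because a treeswap between two vertices on the same level only permutes membership among $\Desc(u') \cup \Desc(v')$, and $u^{(k)}$ lies on the same level as $v^{(k)}$ for each $k$, so level counts are preserved. For property (2): every vertex strictly before $u'$ in the order is untouched by the treeswap (it lies outside $\Desc(u') \cup \Desc(v')$, or would contradict minimality/left-most choice), so $S$ and $S'$ agree on all such vertices; and at $u'$ itself, if $(u',v')$ is a type-(1) swappable pair then $u' \notin S$ but $u' \in S'$ (since $v' \in S$), while if it is a type-(2) pair then I instead need to exhibit the first vertex of disagreement among the descendants — here $u' \notin S'$ still, but some descendant $u^{(k)}$ with $v^{(k)} \in S$ enters $S'$, and I take the earliest such, noting all earlier vertices agree. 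Either way there is a vertex $v^* \in S' \setminus S$ with all earlier vertices agreeing, giving $S' < S$.

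The main obstacle I anticipate is the bookkeeping in the last step for the type-(2) swappable case: there, $u'$ and $v'$ are both outside $S$, so the "new" vertex that witnesses $S' < S$ is not $u'$ but some descendant of $u'$, and I need to be careful that every vertex preceding that descendant in the breadth-first order — including vertices in intervening levels and vertices in $\Desc(v')$ that precede it — agrees between $S$ and $S'$. This requires using the fact that $u'$ is to the left of $v'$ (so all of $\Desc(u')$ precedes all of $\Desc(v')$ on each level), together with the choice of $u^{(k)}$ as the \emph{earliest} descendant of $u'$ whose counterpart lies in $S$. A secondary minor point is confirming that the left-most swappable pair on level $i$ does not disturb anything to its left on level $i$ or on earlier levels, which follows from those levels being left-ordered and from the swap acting only within $\Desc(u') \cup \Desc(v')$. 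None of these steps is deep, but they need to be laid out carefully to make the strict-ordering claim airtight.
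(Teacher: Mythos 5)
Your proposal is correct and takes essentially the same route as the paper: extract a type-(1) swappable pair from the witness vertex of $U<S$ via the equal-level-count argument, let $\mathrm{left}$ act on the earliest swappable pair (possibly type (2), possibly on an earlier level), and exhibit the witness for $\mathrm{left}(S)<S$ — namely $u'$ itself in the type-(1) case, and in the type-(2) case the child of $u'$ corresponding to the left-most child of $v'$ lying in $S$. The bookkeeping issues you flag (level counts preserved by the depth-preserving correspondence, agreement on all vertices preceding the witness) are exactly the points the paper's proof checks.
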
 
\begin{proof}
Suppose $S$ and $U$ are subsets of $V(T)$ with $U<S$.  
By the definition of the partial order, there must be some $u'\in U$ with $u' \notin S$ so that for each vertex $w$ before $u'$ in the ordering on the tree, $w$ is in both $S$ and $U$, or $w$ is in neither. 
Also, each level of $S$ and $U$ must contain the same number of vertices. 
As such, there must be some vertex $v'$ in the same level as $u'$ with $v' \in S$ and $v' \notin U$, with $u'$ to the left of $v'$. 
However, now there is at least one pair $u',v'$ that is swappable.

Since there is at least one swappable pair, there must be a pair of vertices $u,v$ that are swappable such that no pair of vertices before $u$ and $v$ are swappable. 
Note that $u$ and $v$ may or may not satisfy $u \notin S$ and $v\in S$ by this definition, and we may instead have $u,v\notin S$, $\children(u)\cap S =\emptyset$, and  $\children(v)\cap S \neq \emptyset$. 
Also, $u$ and $v$ may not be on the same level as $u'$ and $v'$; however, we are then guaranteed that each level before the level containing $u$ and $v$ is left-ordered. 
By Lemma~\ref{lem:swappable_for_compression}, the treeswap of $S$ between $u$ and $v$, $\mathrm{left}(S)$, is a compression. 

In the case that $u\notin S$ and $v \in S$, the subsets of vertices $S$ and $S'$ agree on all vertices before $u$ and have $u \in \mathrm{left}(S)$ and $u \notin S$. Since each level in $S$ and $\mathrm{left}(S)$  have the same number of vertices, it follows that $\mathrm{left}(S) < S$. 
In the alternate case that $u,v \notin S$, $\children(u) \cap S = \emptyset$ and $\children(v)\cap S \neq \emptyset$, let $v^{(i)}$ be the left-most child of $v$ that is in $S$. Recall that since $v^{(i)} \in S$, we have $u^{(i)} \in \mathrm{left}(S)$.   
The subsets of vertices $S$ and $\mathrm{left}(S)$ agree on all vertices before $u^{(i)}$ and have $u^{(i)} \in \mathrm{left}(S)$ and $u^{(i)} \notin S$. Since each row in $S$ and $\mathrm{left}(S)$ has the same number of vertices, it follows that $\mathrm{left}(S) < S$. 
This completes the proof.
\end{proof}

As a consequence of Lemma~\ref{lem:not_left_compressed_means_swappable}, if $S$ is $\mathrm{left}$-compressed, then there is no $U$ with $U < S$.
It follows from the definition of the strict partial order that the only $S$ such that there is no $U$ with $U<S$ are those $S$ that are left-ordered. 
\begin{corollary}\label{cor:left_ordered}
    If $S\subseteq V(T)$ is left-compressed, then $S$ is left-ordered. 
\end{corollary}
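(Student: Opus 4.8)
The plan is to obtain the corollary as an essentially immediate consequence of Lemma~\ref{lem:not_left_compressed_means_swappable}, together with an identification of the $<$-minimal subsets of $V(T)$. First I would argue by contraposition: assume $S$ is not left-ordered and show that $S$ is not left-compressed, i.e.\ that $\mathrm{left}(S) \neq S$. Since $S$ is not left-ordered, there is a least index $i$ such that level $i$ of $S$ is not left-ordered; then every level before level $i$ is left-ordered and $L_i$ contains a swappable pair, so by definition $\mathrm{left}(S)$ performs the treeswap of $S$ between the left-most swappable pair $u,v \in L_i$. It then remains only to check that this treeswap genuinely alters $S$, which is the content of the two cases in the definition of swappability: if $u \notin S$ and $v \in S$ then $u \in \mathrm{left}(S) \setminus S$, while if $u,v \notin S$ with $\children(u)\cap S=\emptyset$ and $\children(v)\cap S \neq \emptyset$, then for any child $v^{(j)}$ of $v$ lying in $S$ the corresponding child $u^{(j)}$ of $u$ lies in $\mathrm{left}(S)\setminus S$. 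Either way $\mathrm{left}(S)\neq S$, so $S$ is not left-compressed.

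Alternatively, and closer to the phrasing preceding the statement, I would note that the contrapositive of Lemma~\ref{lem:not_left_compressed_means_swappable} says that a left-compressed $S$ (for which $\mathrm{left}(S)=S$, hence $\mathrm{left}(S)\not<S$) admits no $U$ with $U<S$, and then show that any set admitting no such $U$ must be left-ordered. For the latter, assuming $S$ is not left-ordered one exhibits the required $U$ directly: take $U$ to be the treeswap of $S$ between the left-most swappable pair $u,v\in L_i$ in the first non-left-ordered level $i$. The treeswap only rearranges membership inside $\Desc(u)\cup\Desc(v)$, and these two descendant sets have the same number of vertices on each level, so $U$ and $S$ agree on the vertex counts of every level of $T$, giving condition~(1) of the partial order; and the witness vertex for condition~(2) is the vertex $u$ (in the first case) or the child $u^{(j)}$ of $u$ (in the second case) identified above, since $S$ and $U$ agree on all vertices preceding it.

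Either route finishes the corollary: a left-compressed $S$ cannot fail to be left-ordered. I expect the only point requiring care to be the verification, in the second type of swappable pair, that $S$ and $U$ (respectively $S$ and $\mathrm{left}(S)$) really do coincide on every vertex strictly preceding $u^{(j)}$ in the breadth-first order — this needs that the levels before level $i$ are left-ordered, that $(u,v)$ was chosen left-most in $L_i$, and that $v^{(j)}$ was chosen to be the left-most child of $v$ lying in $S$ — but this is exactly the bookkeeping already carried out at the end of the proof of Lemma~\ref{lem:not_left_compressed_means_swappable}, so no genuinely new argument is needed.
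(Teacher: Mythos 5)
Your proposal is correct and is essentially the paper's own argument: your second route is exactly what the paper does (apply Lemma~\ref{lem:not_left_compressed_means_swappable} and observe that the only sets admitting no $U<S$ are the left-ordered ones, with the witness $U$ obtained by a treeswap at the first non-left-ordered level), and your first route is merely a direct contrapositive repackaging of the same observation that such a treeswap genuinely changes $S$.
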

If we consider the vertex subsets $S$ with $|S|=s$, then we know that $\mathrm{left}(S)$ has the same cardinality as $S$, and $\delta(\mathrm{left}(S))$ is not larger than $\delta(S)$. To analyze the smallest vertex boundary of sets of cardinality $s$, we need to consider those that are left-compressed. 
Corollary~\ref{cor:left_ordered} is useful to us as it allows us to know the structure of left-compressed sets, which will help us prove the main results. 

See Figure~\ref{fig:leftcompressed} for an illustration of a left-ordered subset of vertices. 

\begin{figure}[h]
    \centering
    \begin{tikzpicture}
        
    \node[draw=black, fill=white, circle, inner sep=2pt] (u) at (0, 0) {};
    \node[fill=black, circle, inner sep=2pt] (u0) at (-2, -1) {};
    \node[draw=black, fill=white, circle, inner sep=2pt] (u1) at (2, -1) {};
    \node[fill=black, circle, inner sep=2pt] (u00) at (-3, -2) {};
    \node[fill=black, circle, inner sep=2pt] (u01) at (-1, -2) {};
    \node[draw=black, fill=white, circle, inner sep=2pt] (u10) at (1, -2) {};
    \node[draw=black, fill=white, circle, inner sep=2pt] (u11) at (3, -2) {};
    \node[fill=black, circle, inner sep=2pt] (u000) at (-3.5, -3) {};
    \node[fill=black, circle, inner sep=2pt] (u001) at (-2.5, -3) {};
    \node[draw=black, fill=white, circle, inner sep=2pt] (u010) at (-1.5, -3) {};
    \node[draw=black, fill=white, circle, inner sep=2pt] (u011) at (-0.5, -3) {};
    \node[draw=black, fill=white, circle, inner sep=2pt] (u100) at (0.5, -3) {};
    \node[draw=black, fill=white, circle, inner sep=2pt] (u101) at (1.5, -3) {};
    \node[draw=black, fill=white, circle, inner sep=2pt] (u110) at (2.5, -3) {};
    \node[draw=black, fill=white, circle, inner sep=2pt] (u111) at (3.5, -3) {};
    \node[fill=black, circle, inner sep=2pt] (u0000) at (-3.75, -4) {};
    \node[fill=black, circle, inner sep=2pt] (u0001) at (-3.25, -4) {};
    \node[fill=black, circle, inner sep=2pt] (u0010) at (-2.75, -4) {};
    \node[fill=black, circle, inner sep=2pt] (u0011) at (-2.25, -4) {};
    \node[fill=black, circle, inner sep=2pt] (u0100) at (-1.75, -4) {};
    \node[fill=black, circle, inner sep=2pt] (u0101) at (-1.25, -4) {};
    \node[fill=black, circle, inner sep=2pt] (u0110) at (-0.75, -4) {};
    \node[fill=black, circle, inner sep=2pt] (u0111) at (-0.25, -4) {};
    \node[fill=black, circle, inner sep=2pt] (u1000) at (0.25, -4) {};
    \node[draw=black, fill=white, circle, inner sep=2pt] (u1001) at (0.75, -4) {};
    \node[draw=black, fill=white, circle, inner sep=2pt] (u1010) at (1.25, -4) {};
    \node[draw=black, fill=white, circle, inner sep=2pt] (u1011) at (1.75, -4) {};
    \node[draw=black, fill=white, circle, inner sep=2pt] (u1100) at (2.25, -4) {};
    \node[draw=black, fill=white, circle, inner sep=2pt] (u1101) at (2.75, -4) {};
    \node[draw=black, fill=white, circle, inner sep=2pt] (u1110) at (3.25, -4) {};
    \node[draw=black, fill=white, circle, inner sep=2pt] (u1111) at (3.75, -4) {};
    \draw (u) -- (u0);
    \draw (u) -- (u1);
    \draw (u0) -- (u00);
    \draw (u0) -- (u01);
    \draw (u1) -- (u10);
    \draw (u1) -- (u11);
    \draw (u00) -- (u000);
    \draw (u00) -- (u001);
    \draw (u01) -- (u010);
    \draw (u01) -- (u011);
    \draw (u10) -- (u100);
    \draw (u10) -- (u101);
    \draw (u11) -- (u110);
    \draw (u11) -- (u111);
    \draw (u000) -- (u0000);
    \draw (u000) -- (u0001);
    \draw (u001) -- (u0010);
    \draw (u001) -- (u0011);
    \draw (u010) -- (u0100);
    \draw (u010) -- (u0101);
    \draw (u011) -- (u0110);
    \draw (u011) -- (u0111);
    \draw (u100) -- (u1000);
    \draw (u100) -- (u1001);
    \draw (u101) -- (u1010);
    \draw (u101) -- (u1011);
    \draw (u110) -- (u1100);
    \draw (u110) -- (u1101);
    \draw (u111) -- (u1110);
    \draw (u111) -- (u1111);
    \end{tikzpicture}
    \caption{The left-compressed subset of vertices in a complete binary tree that has one vertex in level 1, two vertices in levels 2 and 3, and nine vertices in level 4.}
    \label{fig:leftcompressed}
\end{figure}
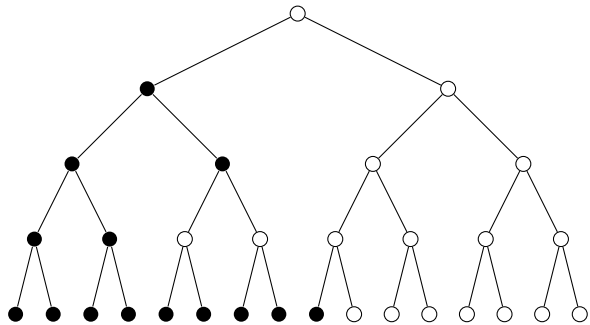

\subsection{Push-down compressions}

In this section, we will observe mappings in which the set of chosen vertices $S$ is pushed down, which means we replace vertices in $S$ with vertices further from the root of the tree. 
We start this endeavour by defining a \emph{$u$--$v$ exchange} of $S$: replace the single vertex $u\in S$ with a vertex $v \notin S$ to form a new set of vertices $S' = (S\setminus \{u\}) \cup \{v\}$.
If $|\delta(S')|\leq |\delta(S)|$, then this operation is a compression. 

While this seems to have no inherent downward push, we notice that any leaf not in $S$ makes a good candidate for the role of $v$, as does any vertex not in $S$ but with all its descendants in $S$. 
Two vertices $u,v\in V(T)$ with $u\in S$ and $v \notin S$ are \emph{down-swappable} in $S$ if either 
\begin{enumerate}
    \item $|\delta(S \cup v) \setminus \{u\})| < |\delta(S)|$, or 
    \item $|\delta(S \cup v) \setminus \{u\})| = |\delta(S)|$ and $v$ is further from the root than $u$. 
\end{enumerate}
We define the function $\mathrm{down}:\mathcal{P}(V(T))\rightarrow \mathcal{P}(V(T))$ to be the function $\mathrm{down}(S) = (S \cup v) \setminus \{u\})$ such that $u$ and $V$ are the first vertices in the ordering on $T$ such that $u$ and $v$ are down-swappable.  

For a set $S\subseteq V(T)$, we may recursively apply the compression function $\mathrm{down}$ to $S$ if there are $u$ and $v$ that are down-swappable. Since each such exchange either reduces the vertex border or results in a strictly greater number of vertices in lower levels, this recursive process must terminate with the down-compressed set $S' \subseteq V(T)$ with $|S'| = |S|$ and $|\delta(S')| \leq |\delta(S)|$.

Note that if $S$ is down-compressed, then 
\begin{enumerate}    
\item For all $u\in S$ and $v \in V(T)\setminus S$, 
\[
| \delta( (S\cup\{v\})\setminus\{u\} )| \geq |\delta(S)|; \text{ and}
\] 
\item for all $u\in S$ and $v \in V(T)\setminus S$ with $v$ further from the root vertex than $u$, 
\[
|\delta((S\cup\{v\})\setminus\{u\})| > |\delta(S)|.
\] 
\end{enumerate}

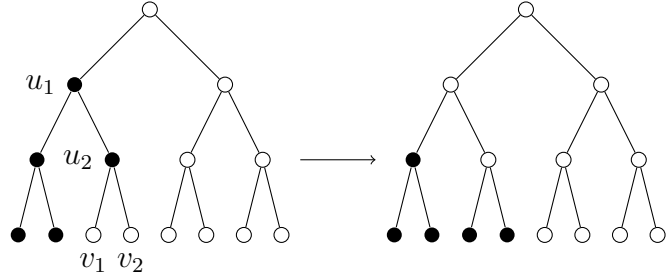
\begin{figure}
    \centering
    \begin{tikzpicture}
    \node[draw=black, fill=white, circle, inner sep=2pt] (u) at (5, 0) {};
    \node[fill=black, circle, inner sep=2pt, label=left:$u_1$] (u0) at (4, -1) {};
    \node[draw=black, fill=white, circle, inner sep=2pt] (u1) at (6, -1) {};
    \node[fill=black, circle, inner sep=2pt] (u00) at (3.5, -2) {};
    \node[fill=black, circle, inner sep=2pt, label=left:$u_2$] (u01) at (4.5, -2) {};
    \node[draw=black, fill=white, circle, inner sep=2pt] (u10) at (5.5, -2) {};
    \node[draw=black, fill=white, circle, inner sep=2pt] (u11) at (6.5, -2) {};
    \node[fill=black, circle, inner sep=2pt] (u000) at (3.25, -3) {};
    \node[fill=black, circle, inner sep=2pt] (u001) at (3.75, -3) {};
    \node[draw=black, fill=white, circle, inner sep=2pt, label=below:$v_1$] (u010) at (4.25, -3) {};
    \node[draw=black, fill=white, circle, inner sep=2pt, label=below:$v_2$] (u011) at (4.75, -3) {};
    \node[draw=black, fill=white, circle, inner sep=2pt] (u100) at (5.25, -3) {};
    \node[draw=black, fill=white, circle, inner sep=2pt] (u101) at (5.75, -3) {};
    \node[draw=black, fill=white, circle, inner sep=2pt] (u110) at (6.25, -3) {};
    \node[draw=black, fill=white, circle, inner sep=2pt] (u111) at (6.75, -3) {};
    \draw (u) -- (u0);
    \draw (u) -- (u1);
    \draw (u0) -- (u00);
    \draw (u0) -- (u01);
    \draw (u1) -- (u10);
    \draw (u1) -- (u11);
    \draw (u00) -- (u000);
    \draw (u00) -- (u001);
    \draw (u01) -- (u010);
    \draw (u01) -- (u011);
    \draw (u10) -- (u100);
    \draw (u10) -- (u101);
    \draw (u11) -- (u110);
    \draw (u11) -- (u111);

    \draw[->] (7,-2) -- (8,-2);

    \node[draw=black, fill=white, circle, inner sep=2pt] (u) at (10, 0) {};
    \node[draw=black, fill=white, circle, inner sep=2pt] (u0) at (9, -1) {};
    \node[draw=black, fill=white, circle, inner sep=2pt] (u1) at (11, -1) {};
    \node[fill=black, circle, inner sep=2pt] (u00) at (8.5, -2) {};
    \node[draw=black, fill=white, circle, inner sep=2pt] (u01) at (9.5, -2) {};
    \node[draw=black, fill=white, circle, inner sep=2pt] (u10) at (10.5, -2) {};
    \node[draw=black, fill=white, circle, inner sep=2pt] (u11) at (11.5, -2) {};
    \node[fill=black, circle, inner sep=2pt] (u000) at (8.25, -3) {};
    \node[fill=black, circle, inner sep=2pt] (u001) at (8.75, -3) {};
    \node[fill=black, circle, inner sep=2pt] (u010) at (9.25, -3) {};
    \node[fill=black, circle, inner sep=2pt] (u011) at (9.75, -3) {};
    \node[draw=black, fill=white, circle, inner sep=2pt] (u100) at (10.25, -3) {};
    \node[draw=black, fill=white, circle, inner sep=2pt] (u101) at (10.75, -3) {};
    \node[draw=black, fill=white, circle, inner sep=2pt] (u110) at (11.25, -3) {};
    \node[draw=black, fill=white, circle, inner sep=2pt] (u111) at (11.75, -3) {};
    \draw (u) -- (u0);
    \draw (u) -- (u1);
    \draw (u0) -- (u00);
    \draw (u0) -- (u01);
    \draw (u1) -- (u10);
    \draw (u1) -- (u11);
    \draw (u00) -- (u000);
    \draw (u00) -- (u001);
    \draw (u01) -- (u010);
    \draw (u01) -- (u011);
    \draw (u10) -- (u100);
    \draw (u10) -- (u101);
    \draw (u11) -- (u110);
    \draw (u11) -- (u111);
\end{tikzpicture}
    \caption{The results of performing a $u_1$--$v_1$ exchange and a $u_2$--$v_2$ exchange on a subset of vertices $S$ (left), which yields a down-compressed set $S'$ (right)}
    \label{fig:enter-label}
\end{figure}

One advantage of considering down-compressed vertex subsets is that if $u\in S$, we can infer that many of $u$'s children and grandchildren are in $S$. 

\renewcommand{\thesubfigure}{\arabic{subfigure}}

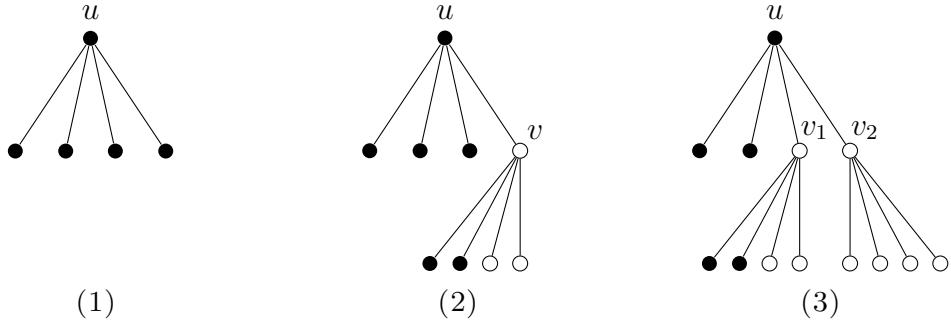
\begin{figure}
    \centering
\begin{subfigure}{0.3\textwidth}
\centering
    \begin{tikzpicture}
    \node[fill=black, circle, inner sep=2pt, label=above:$u$] (u) at (0, 0) {};
    \node[fill=black, circle, inner sep=2pt] (v1) at (-1, -1.5) {};
    \node[fill=black, circle, inner sep=2pt] (v2) at (-0.33, -1.5) {};
    \node[fill=black, circle, inner sep=2pt] (v3) at (0.33, -1.5) {};
    \node[fill=black, circle, inner sep=2pt] (v4) at (1, -1.5) {};
    \draw (u) -- (v1);
    \draw (u) -- (v2);
    \draw (u) -- (v3);
    \draw (u) -- (v4);
\end{tikzpicture}
\vspace{1.5cm}
\caption{}
\end{subfigure}
\begin{subfigure}{0.3\textwidth}
\centering
    \begin{tikzpicture}
    \node[fill=black, circle, inner sep=2pt, label=above:$u$] (u2) at (0, 0) {};
    \node[fill=black, circle, inner sep=2pt] (v1) at (-1, -1.5) {};
    \node[fill=black, circle, inner sep=2pt] (v2) at (-0.33, -1.5) {};
    \node[fill=black, circle, inner sep=2pt] (v3) at (0.33, -1.5) {};
    \node[draw=black, fill=white, circle, inner sep=2pt, label={[xshift=0.2cm, yshift=-0.10cm]$v$}] (v4) at (1, -1.5) {};
    \draw (u2) -- (v1);
    \draw (u2) -- (v2);
    \draw (u2) -- (v3);
    \draw (u2) -- (v4);
    
    \node[fill=black, circle, inner sep=2pt] (w1) at (-0.2, -3) {};
    \node[fill=black, circle, inner sep=2pt] (w2) at (0.2, -3) {};
    \node[draw=black, fill=white, circle, inner sep=2pt] (w3) at (0.6, -3) {};
    \node[draw=black, fill=white, circle, inner sep=2pt] (w4) at (1, -3) {};
    \draw (v4) -- (w1);
    \draw (v4) -- (w2);
    \draw (v4) -- (w3);
    \draw (v4) -- (w4);
    \end{tikzpicture}
    
\caption{}
\end{subfigure}
\begin{subfigure}{0.3\textwidth}
\centering
    \begin{tikzpicture}
    \node[fill=black, circle, inner sep=2pt, label=above:$u$] (u3) at (0, 0) {};
    \node[fill=black, circle, inner sep=2pt] (v1) at (-1, -1.5) {};
    \node[fill=black, circle, inner sep=2pt] (v2) at (-0.33, -1.5) {};
    \node[draw=black, fill=white, circle, inner sep=2pt, label={[xshift=0.2cm, yshift=-0.10cm]$v_1$}] (v3) at (0.33, -1.5) {};
    \node[draw=black, fill=white, circle, inner sep=2pt, label={[xshift=0.2cm, yshift=-0.10cm]$v_2$}] (v4) at (1, -1.5) {};
    
    \draw (u3) -- (v1);
    \draw (u3) -- (v2);
    \draw (u3) -- (v3);
    \draw (u3) -- (v4);
    
    \node[fill=black, circle, inner sep=2pt] (w1) at (-0.87, -3) {};
    \node[fill=black, circle, inner sep=2pt] (w2) at (-0.47, -3) {};
    \node[draw=black, fill=white, circle, inner sep=2pt] (w3) at (-0.07, -3) {};
    \node[draw=black, fill=white, circle, inner sep=2pt] (w4) at (0.33, -3) {};
    \draw (v3) -- (w1);
    \draw (v3) -- (w2);
    \draw (v3) -- (w3);
    \draw (v3) -- (w4);
    
    \node[draw=black, fill=white, circle, inner sep=2pt] (x1) at (1, -3) {};
    \node[draw=black, fill=white, circle, inner sep=2pt] (x2) at (1.4, -3) {};
    \node[draw=black, fill=white, circle, inner sep=2pt] (x3) at (1.8, -3) {};
    \node[draw=black, fill=white, circle, inner sep=2pt] (x4) at (2.2, -3) {};
    \draw (v4) -- (x1);
    \draw (v4) -- (x2);
    \draw (v4) -- (x3);
    \draw (v4) -- (x4);
    \end{tikzpicture}
    
\caption{}
\end{subfigure}
    \caption{Examples of the three possible configurations of a left- and down-compressed subset of vertices $S$, where filled vertices represent those vertices in $S$.}
    \label{fig:down_left_compressed}
\end{figure}

\begin{lemma} \label{lem:downward_compressed_initial}
    Suppose $T$ is a complete $q$-ary tree, and let $S\subseteq V(T)$ such that $S$ is left- and down-compressed. 
    For every vertex $u \in S$, either:
    \begin{enumerate}
        \item all children of $u$ are in $S$; 
        \item there is exactly one child of $u$ that is not in $S$, say $v$, and $v$ has at least one child not in $S$; or
        \item there is exactly two children of $u$ that are not in $S$, say $v_1$ and $v_2$, such that $v_1$ has at least one child in $S$ and at least one child not in $S$, and $v_2$ has no children in $S$.
    \end{enumerate}
\end{lemma}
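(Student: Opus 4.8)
The plan is to combine the two compression hypotheses: left-compressedness to pin down the coarse shape of $S$, and down-compressedness to exclude everything except (1)--(3) by exhibiting single-vertex exchanges that would strictly lower the vertex-border, or keep it equal while moving a vertex downward. First I would record the shape coming from left-compressedness. By Corollary~\ref{cor:left_ordered}, $S$ is left-ordered, and since any vertex of $V(T)\setminus S$ lying to the left of a vertex of $S$ in the same level is a swappable pair of type (1), being left-ordered means exactly that $S\cap L_i$ is an initial segment of $L_i$ for every $i$; in particular $S\cap\Desc(u)$ is an initial segment of each of its levels. Hence, for $u\in S$, the children of $u$ in $S$ form an initial segment $v_1,\dots,v_{q-k}$ of $\children(u)$, the remaining children $v_{q-k+1},\dots,v_q$ are precisely the ones outside $S$, and among those the property ``$v_j$ has a child in $S$'' is again an initial-segment property.

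The engine for the rest is a border computation for the exchange $S'=(S\cup\{v\})\setminus\{u\}$ with $v\in\Desc(u)\setminus(\{u\}\cup S)$: since $\delta$ changes only near $u$, near $v$, and near their parents and children, one reads off $|\delta(S')|-|\delta(S)|$ exactly. Two observations drive everything: adding $v$ contributes $-1$ exactly when $v$ already had an $S$-neighbour (in particular when $\Pa(v)\in S$), together with $+1$ for each child of $v$ outside $S$ having no child in $S$; and deleting $u$ removes from the border every child $w\notin S$ of $u$ whose only $S$-neighbour was $u$. I would first apply this with $v$ a child of $u$ all of whose children lie in $S$: then adding $v$ alone lowers $|\delta|$ by one, deleting $u$ does not raise it, and $v$ sits strictly below $u$, contradicting down-compressedness. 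So every child of $u$ outside $S$ has a child outside $S$; together with the initial-segment structure this already gives case (2) when $k=1$, and when $k=2$ shows that $v_q$ has no child in $S$ while $v_{q-1}$ has a child not in $S$.

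The crux is to rule out the one remaining configuration: $u\in S$ having two children $v,v'$ (with $v$ left of $v'$), both outside $S$ and both with no child in $S$; this is the only way to get $k\ge 3$, or $k=2$ with $v_{q-1}$ also lacking an $S$-child. Here I would argue by descent inside $\Desc(v_{q-1})$, keeping $v_q$ fixed as a ``witness'' child of $u$ whose sole $S$-neighbour is $u$. The exchange $(S\cup\{v_{q-1}\})\setminus\{u\}$ fails to strictly increase $|\delta|$ unless $v_{q-1}$ has at least two children with no child in $S$; choosing such a child $c$ (necessarily outside $S$ and with no child in $S$), the exchange $(S\cup\{c\})\setminus\{u\}$ — which now gets no $-1$ from adding $c$ since $\Pa(c)\notin S$, but still loses $v_q$ from the border when $u$ is deleted — fails unless $c$ itself has a child with no child in $S$, and one repeats one level deeper. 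Since the moving vertex descends a level each time, the process reaches a leaf $\ell\in\Desc(v_{q-1})$, at which point $(S\cup\{\ell\})\setminus\{u\}$ changes $|\delta|$ by at most $0$ while moving a vertex strictly downward, contradicting down-compressedness. This forces $k\le 2$, and for $k=2$ forces $v_{q-1}$ to have a child in $S$.

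Assembling the cases: $k=0$ is case (1), $k=1$ is case (2) by the second paragraph, $k=2$ yields $v_{q-1}$ with a child in $S$ and a child outside $S$ together with $v_q$ having no child in $S$, which is case (3), and $k\ge 3$ cannot occur. The part I expect to be the main obstacle is the descent in the previous paragraph: arranging that the fixed witness $v_q$ keeps supplying the saving $-1$ at every step, and checking the exact border count in the degenerate situations — when $u$ has no child in $S$, when $\Pa(u)\notin S$, or when $\Desc(v_{q-1})$ is shallow enough that $N(\ell)$ overlaps $N(u)$.
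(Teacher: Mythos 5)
Your proposal is correct, and its skeleton matches the paper's: left-compression gives the initial-segment structure on each level (and the prefix property of ``has a child in $S$'' among non-$S$ vertices), a single downward exchange rules out a non-$S$ child of $u$ all of whose children lie in $S$, and everything then reduces to the configuration of two non-$S$ children of $u$ neither of which has a child in $S$. Where you genuinely differ is in how that last configuration is contradicted. The paper does it with one exchange: it takes $w$ to be a leaf of $\Desc(u)$ outside $S$ if one exists, and otherwise the closest-to-leaf vertex of $\Desc(u)$ outside $S$ with all of its children in $S$, and verifies the single inclusion $\delta\big((S\cup\{w\})\setminus\{u\}\big)\subseteq\big(\delta(S)\setminus\{v_{q-1},v_q\}\big)\cup\{u,\Pa(w)\}$, so the border does not grow while $w$ is strictly deeper than $u$. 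You instead run a level-by-level descent inside $\Desc(v_{q-1})$, at each step either getting the contradiction or finding a child with no child in $S$ one level lower, terminating at a leaf, where your exchange is exactly the paper's leaf case; your descent never needs the paper's second choice of witness, at the cost of the iteration. The obstacle you flagged does resolve, but your running count needs two corrections that happen to cancel: from the third step of the descent onward, adding $c$ puts $\Pa(c)$ into the border (a $+1$ you omit), while $v_{q-1}$ simultaneously drops out of the border (its only $S$-neighbour in that exchange is the deleted $u$), so your thresholds ($m\ge 2$ at the first step, $m\ge 1$ thereafter, change at most $0$ at a leaf) are the right ones. Likewise, ``deleting $u$ does not raise it'' should be ``the exchange does not raise the border in total,'' since $u$ itself enters $\delta$ after the exchange; condition (2) in the definition of down-compressed only requires the non-strict inequality together with $v$ being deeper, so the conclusion is unaffected.
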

\begin{proof}
We first observe that if the children of $u$ are leaves of $T$ and some child of $u$, say $v$, is not in $S$, then $S$ is not down-compressed. 
This follows since under these conditions $\delta((S\cup\{v\}) \setminus \{u\}) \subseteq (\delta(S) \cup \{u\})\setminus \{v\}$ as $v \in \delta(S)$ and $u\in S$, which yields  $|\delta((S\cup\{v\}) \setminus \{u\})|  \leq |\delta(S)|$ as $v \in \delta(S)$.
It follows that if the children of $u$ are leaves of $T$, then the first condition holds. 

See Figure~\ref{fig:down_left_compressed} for an example of each of the three possible cases given in the statement of this lemma. 
We may then suppose that $u$ has a distance of at least $2$ from a leaf and also that none of the three conditions of the theorem statement are met. 
    The vertex $u$ has either:
    \begin{enumerate}
    \item at least three children that are not in $S$. Due to being left-compressed, observe that either one of these children has all of its children in $S$, or two of these children have none of their children in $S$; 
    \item two children that are not in $S$, with either one of these children having all of its children in $S$, or both of these children having none of their children in $S$; or
    \item one child that is not in $S$, and this child has all of its children in $S$.  
    \end{enumerate}
    If we have a child of $u$, say $v$, that is not in $S$ and has all of its children in $S$, then $S$ is not down-compressed since $v$ is further from the root than $u$, but $|\delta((S \cup \{v\})\setminus \{u\})| \leq |\delta(S)|$. 
    Removing this possibility in each of the enumerated cases leaves us with $u$ having at least two children, say $v_1$ and $v_2$, such that neither $v_1$ nor $v_2$ has a child in $S$. 
    We will show that this is not possible. 
    Note that $v_1,v_2 \in \delta(S)$ and $v_1,v_2 \notin \delta(S\setminus\{u\})$. 
    If $\Desc(u)$ has a leaf that is not in $S$, label this leaf as $w$. 
    If not, then let $w\in \Desc(u)$ be the closest vertex to a leaf such that $w \notin  S$ and with all of $w$'s children in $S$.
    Observe that $\delta\big((S\cup \{w\})\setminus\{u\}\big) \subseteq \big(\delta(S) \setminus \{v_1,v_2\} \big)\cup \{u, \Pa(w)\}$, and so it follows that $|\delta((S\cup \{w\})\setminus\{u\})| \leq |\delta(S)|$, and so the $u$--$w$ swap is a compression. 
    Noting that $w$ is closer to a leaf than $u$, this contradicts the fact that $S$ was assumed to be down-compressed.
\end{proof}

Suppose $S$ is a left- and down-compressed subset of $V(T)$. 
By applying Lemma~\ref{lem:downward_compressed_initial}, we are able to show that if some vertex $u \in S$, then $S$ contains a vertex in $\Desc(u)$ in each level below $u$. 

\begin{lemma} \label{lem:downward_compressed_all_levels_compressed}
    Suppose $T$ is a complete $q$-ary tree of depth $d$, and let $S\subseteq V(T)$ such that $S$ is left- and down-compressed. 
    Suppose $u \in V(T)$ such that $\Desc(u) \cap S \cap L_i$ is non-empty. 
    We then have that $\Desc(u) \cap S \cap L_{i'}$ is non-empty for all $i \leq i' \leq d$. 
\end{lemma}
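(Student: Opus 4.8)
The plan is to reduce to the case $u \in S$ and then descend one level at a time via Lemma~\ref{lem:downward_compressed_initial}. For the reduction, pick $w \in \Desc(u) \cap S \cap L_i$; since $\Desc(w) \subseteq \Desc(u)$, it is enough to prove the statement with $w$ in place of $u$, so I may assume $u \in S$ and $u \in L_i$. As $\Desc(u)$ meets every level from $i$ down to $d$ and $u$ itself shows $\Desc(u) \cap S \cap L_i \neq \emptyset$, the lemma will follow by induction on the level once I establish: if $\Desc(u) \cap S \cap L_j \neq \emptyset$ for some $i \le j < d$, then $\Desc(u) \cap S \cap L_{j+1} \neq \emptyset$.

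To prove this implication, fix $w \in \Desc(u) \cap S \cap L_j$; as $j < d$, $w$ is not a leaf, so Lemma~\ref{lem:downward_compressed_initial} applies to $w$. In case~(1) all $q$ children of $w$ lie in $S$; in case~(2) exactly one child of $w$ is missing, leaving $q-1 \ge 1$ of them in $S$; and in case~(3) exactly two children of $w$ are missing, leaving $q-2$ children in $S$, which is at least one whenever $q \ge 3$. In all of these situations some child of $w$ lies in $S$, and such a child belongs to $\Desc(w) \cap S \cap L_{j+1} \subseteq \Desc(u) \cap S \cap L_{j+1}$, as required.

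The single remaining case is case~(3) with $q = 2$, in which $w$ has no child in $S$; I would dispose of it by showing it is incompatible with $S$ being down-compressed. Here the child $v_1$ of $w$ lies outside $S$ but, since we are in case~(3), has exactly one child in $S$ and one child, say $b$, outside $S$, while the other child $v_2$ of $w$ has both of its children outside $S$. Consider the $w$--$v_1$ exchange $S' = (S \setminus \{w\}) \cup \{v_1\}$, which is admissible because $v_1 \notin S$ and $v_1$ is strictly farther from the root than $w$. The only vertices whose membership in $\delta$ can change are $w$, $v_1$, $v_2$, $\Pa(w)$, and the two children of $v_1$: the vertex $w$ now enters the border, since it is adjacent to $v_1 \in S'$, while $v_1$ leaves it; $v_2$ leaves the border as well, since its only neighbours are $w$ and its two children, all now outside $S'$; $\Pa(w)$ can only leave the border or stay, never enter it; the child of $v_1$ lying in $S$ stays in $S'$ and is never in the border; and $b$ can contribute at most one new border vertex, namely itself. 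Tallying these changes gives $|\delta(S')| \le |\delta(S)|$, which contradicts the second defining property of down-compressed sets, namely that $|\delta(S')| > |\delta(S)|$ because $v_1$ lies below $w$. Hence case~(3) cannot occur when $q = 2$, and the inductive step holds for every $q \ge 2$. I expect the careful bookkeeping of this exchange to be the only delicate point; the rest of the argument is a direct reading of Lemma~\ref{lem:downward_compressed_initial}.
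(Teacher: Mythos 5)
Your proposal is correct and takes essentially the same approach as the paper: for $q\ge 3$ a child in $S$ is read off directly from Lemma~\ref{lem:downward_compressed_initial}, and the exceptional $q=2$ configuration (a vertex of $S$ with neither child in $S$ but a grandchild in $S$) is eliminated by exactly the exchange the paper uses, removing that vertex and inserting the child with a descendant in $S$, with the same border bookkeeping contradicting down-compression. Your write-up just carries out the induction and the tally more explicitly than the paper does.
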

\begin{proof}
    This result follows from 
    Lemma~\ref{lem:downward_compressed_initial}, except in the case that $q=2$. 
    In this case, a problem occurs when $\Desc(u) \cap S \cap L_i$ contains a single vertex,
    $\Desc(u) \cap S \cap L_{i+1}$ is empty, 
    and $\Desc(u) \cap S \cap L_{i+2}$ contains a single vertex. 
    In this case, however, the assumption of $S$ being down-compressed is broken, as we could remove the vertex in $\Desc(u) \cap S \cap L_i$ from $S$ and place the left-most vertex of $\Desc(u) \cap L_{i+1}$, say $v$, into $S$. 
    We have $\delta\big((S\setminus \{u\})\cup \{v\}\big) \subseteq \big(\delta(S) \setminus\{v,u_2\}\big)\cup \{v_2,u\}$, with $u_2,v_2$ being the second children of $u$ and $v$, respectively. 
    It follows that $|\delta\big((S\setminus \{u\})\cup \{v\}\big)|\leq |\delta(S)|$, and so this $u$--$v$ exchange does not increase the border size, but it swaps a vertex in $S$ with another vertex not in $S$ that is closer to a leaf. 
    This contradicts the fact that $S$ is down-compressed. 
\end{proof}

\subsection{Aeolian compressions}

The process by which winds push sand to form sand dunes is known as an \emph{aeolian process}. 
After $S\subseteq V(T)$ is left- and down-compressed, we will show that $S$ can be compressed into various `peaks', where all descendants of $u$ (except, perhaps, $u$ itself) are prioritized over descendants of $v$ when $u$ is to the left of $v$.
Given $u,v\in V(T)$ with $u$ to the left of $v$, define an \emph{aeolian compression} to be any compression that removes vertices of $\Desc(v)$ from $S$ and places an equal number of vertices of $\Desc(u)$ into $S$. 
We will use aeolian compressions where we describe removing vertices from $\Desc(v')$ for some $v' \in \Desc(v)$ instead of removing the vertices from $\Desc(v)$, although the move will still be an aeolian compression. 
We define the function $\mathrm{aeolian}:\mathcal{P}(V(T)) \rightarrow \mathcal{P}(V(T))$ that performs the left compression function if $S$ is not left-compressed and the down compression if $S$ is not down-compressed. The function otherwise applies the aeolian compression that has $u$ and $v$ to be earliest in the ordering on the vertices of $T$, and which moves the maximum number of vertices from $\Desc{v}$ to $\Desc{u}$ in $S$.  
That is, $S$ is \emph{aeolian-compressed} if it is left-compressed, down-compressed, and if no aeolian compression of $S$ exists.  

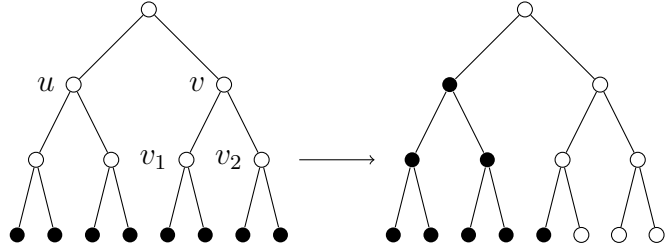
\begin{figure}
    \centering
    \begin{tikzpicture}
    \node[draw=black, fill=white, circle, inner sep=2pt] (u) at (5, 0) {};
    \node[draw=black, fill=white, circle, inner sep=2pt, label=left:$u$] (u0) at (4, -1) {};
    \node[draw=black, fill=white, circle, inner sep=2pt, label=left:$v$] (u1) at (6, -1) {};
    \node[draw=black, fill=white, circle, inner sep=2pt] (u00) at (3.5, -2) {};
    \node[draw=black, fill=white, circle, inner sep=2pt] (u01) at (4.5, -2) {};
    \node[draw=black, fill=white, circle, inner sep=2pt, label=left:$v_1$] (u10) at (5.5, -2) {};
    \node[draw=black, fill=white, circle, inner sep=2pt, label=left:$v_2$] (u11) at (6.5, -2) {};
    \node[fill=black, circle, inner sep=2pt] (u000) at (3.25, -3) {};
    \node[fill=black, circle, inner sep=2pt] (u001) at (3.75, -3) {};
    \node[fill=black, circle, inner sep=2pt] (u010) at (4.25, -3) {};
    \node[fill=black, circle, inner sep=2pt] (u011) at (4.75, -3) {};
    \node[fill=black, circle, inner sep=2pt] (u100) at (5.25, -3) {};
    \node[fill=black, circle, inner sep=2pt] (u101) at (5.75, -3) {};
    \node[fill=black, circle, inner sep=2pt] (u110) at (6.25, -3) {};
    \node[fill=black, circle, inner sep=2pt] (u111) at (6.75, -3) {};
    \draw (u) -- (u0);
    \draw (u) -- (u1);
    \draw (u0) -- (u00);
    \draw (u0) -- (u01);
    \draw (u1) -- (u10);
    \draw (u1) -- (u11);
    \draw (u00) -- (u000);
    \draw (u00) -- (u001);
    \draw (u01) -- (u010);
    \draw (u01) -- (u011);
    \draw (u10) -- (u100);
    \draw (u10) -- (u101);
    \draw (u11) -- (u110);
    \draw (u11) -- (u111);

    \draw[->] (7,-2) -- (8,-2);

    \node[draw=black, fill=white, circle, inner sep=2pt] (u) at (10, 0) {};
    \node[fill=black, circle, inner sep=2pt] (u0) at (9, -1) {};
    \node[draw=black, fill=white, circle, inner sep=2pt] (u1) at (11, -1) {};
    \node[fill=black, circle, inner sep=2pt] (u00) at (8.5, -2) {};
    \node[fill=black, circle, inner sep=2pt] (u01) at (9.5, -2) {};
    \node[draw=black, fill=white, circle, inner sep=2pt] (u10) at (10.5, -2) {};
    \node[draw=black, fill=white, circle, inner sep=2pt] (u11) at (11.5, -2) {};
    \node[fill=black, circle, inner sep=2pt] (u000) at (8.25, -3) {};
    \node[fill=black, circle, inner sep=2pt] (u001) at (8.75, -3) {};
    \node[fill=black, circle, inner sep=2pt] (u010) at (9.25, -3) {};
    \node[fill=black, circle, inner sep=2pt] (u011) at (9.75, -3) {};
    \node[fill=black, circle, inner sep=2pt] (u100) at (10.25, -3) {};
    \node[draw=black, fill=white, circle, inner sep=2pt] (u101) at (10.75, -3) {};
    \node[draw=black, fill=white, circle, inner sep=2pt] (u110) at (11.25, -3) {};
    \node[draw=black, fill=white, circle, inner sep=2pt] (u111) at (11.75, -3) {};
    \draw (u) -- (u0);
    \draw (u) -- (u1);
    \draw (u0) -- (u00);
    \draw (u0) -- (u01);
    \draw (u1) -- (u10);
    \draw (u1) -- (u11);
    \draw (u00) -- (u000);
    \draw (u00) -- (u001);
    \draw (u01) -- (u010);
    \draw (u01) -- (u011);
    \draw (u10) -- (u100);
    \draw (u10) -- (u101);
    \draw (u11) -- (u110);
    \draw (u11) -- (u111);
\end{tikzpicture}
    \caption{The results of performing the aeolian compression moving two vertices from $\Desc(v_2)$ to $\Desc(u)$, and then the aeolian compression moving one vertex from $\Desc(v_1)$ to $\Desc(u)$ on a subset of vertices $S$ (left), which results in an aeolian-compressed set $S'$ (right).}
    \label{fig:enter-label}
\end{figure}
We have defined aeolian-compressed so that if some $S$ is not aeolian-compressed, then there exists some compression that can be applied that would bring us closer to being aeolian compressed, either a treeswap, a $u$--$v$ exchange, or an aeolian compression. 
As such, we have a natural corollary that follows from the definitions. 

\begin{corollary} \label{cor:aeolian}
    Let $T$ be a complete $q$-ary tree. 
    For each $s$, there exists an aeolian-compressed set $S$ with $|S|=s$ and $\Phi_V(T,s) = |\delta(S)|$.
\end{corollary}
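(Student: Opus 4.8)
The plan is to start from a set that already attains $\Phi_V(T,s)$ and to iterate the function $\mathrm{aeolian}$, using that every move it makes is a compression together with a well-founded potential to see that the iteration halts; its fixed point is then the desired set.

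First, fix $s$. Since $\Phi_V(T,s)$ is by definition a minimum over the finitely many subsets of $V(T)$ of size $s$, choose $S_0\subseteq V(T)$ with $|S_0|=s$ and $|\delta(S_0)|=\Phi_V(T,s)$, and set $S_{k+1}=\mathrm{aeolian}(S_k)$. At each step $\mathrm{aeolian}$ performs one of three moves: a treeswap on the left-most swappable pair in the first level that is not left-ordered (so the hypothesis of Lemma~\ref{lem:swappable_for_compression} is met and the treeswap is a compression); a down $u$--$v$ exchange on a down-swappable pair (so $|\delta(S')|\le|\delta(S)|$ by the definition of down-swappable); or an aeolian compression (a compression by definition). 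In every case $|S_{k+1}|=|S_k|$ and $|\delta(S_{k+1})|\le|\delta(S_k)|$, so by induction $|S_k|=s$ and $\Phi_V(T,s)\le|\delta(S_k)|\le|\delta(S_0)|=\Phi_V(T,s)$, whence $|\delta(S_k)|=\Phi_V(T,s)$ for all $k$. Thus any fixed point of the iteration is aeolian-compressed with the required cardinality and border, and it remains only to show that a fixed point is reached.

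For termination I would attach to each $S$ the triple
\[
\mu(S)=\big(\,|\delta(S)|,\ \textstyle\sum_{v\in S}\bigl(d-\ell(v)\bigr),\ \phi(S)\,\big),
\]
ordered lexicographically, where $\ell(v)$ is the level of $v$ and $\phi(S)$ is the integer whose binary digits, read along the breadth-first vertex order with the first vertex most significant, are the complement of the indicator vector of $S$; note that by the definition of the partial order in Lemma~\ref{lem:not_left_compressed_means_swappable}, $A<B$ implies $\phi(A)<\phi(B)$. One then checks that when $S_k$ is not aeolian-compressed, $\mu(S_{k+1})<\mu(S_k)$. A treeswap between two vertices of a common level preserves the number of chosen vertices at each level, so it fixes the first two coordinates, and it strictly decreases $\phi$ by Lemma~\ref{lem:not_left_compressed_means_swappable}. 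A down exchange either strictly decreases $|\delta(S)|$, or keeps it equal while moving a chosen vertex to a strictly deeper level, which strictly decreases the second coordinate. An aeolian compression either strictly decreases $|\delta(S)|$, or keeps it equal; in the latter case, because it is invoked only on a left- and down-compressed set, whose structure is pinned down by Lemmas~\ref{lem:downward_compressed_initial} and~\ref{lem:downward_compressed_all_levels_compressed} (a left-packed ``staircase''), and because $u$ and $v$ lie on the same level and so have isomorphic descendant subtrees, the move can be realized by transporting a sub-configuration from $\Desc(v)$ to the corresponding levels of $\Desc(u)$; this leaves the second coordinate unchanged and strictly decreases $\phi$. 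Since $\mu$ lands in a well-founded order and $\mathcal{P}(V(T))$ is finite, the sequence $(S_k)$ is eventually constant, at an aeolian-compressed set, which proves Corollary~\ref{cor:aeolian}.

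The main obstacle is the last verification: showing that the specific aeolian compression performed by the function (the one moving the maximum number of vertices, with $u,v$ earliest in the order) does not lift any chosen vertex to a shallower level, so that it leaves the second coordinate of $\mu$ fixed while still decreasing the leftness coordinate $\phi$. This is exactly where the structural description of left- and down-compressed subsets is needed: one must argue that on such a ``staircase'' the only available aeolian moves from $\Desc(v)$ into $\Desc(u)$ are the level-preserving ones, so the net effect seen by $\mu$ is purely the intended leftward shift.
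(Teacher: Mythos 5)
Your overall strategy is the same as the paper's: start from a set attaining $\Phi_V(T,s)$, iterate the three kinds of moves, note that each move preserves cardinality and never increases the border, and conclude that any fixed point is an aeolian-compressed optimal set. That half of your argument is correct, and the paper itself treats the corollary as following directly from the definitions, without exhibiting any termination measure. The part where you go beyond the paper --- the explicit lexicographic potential $\mu(S)=\big(|\delta(S)|,\ \sum_{v\in S}(d-\ell(v)),\ \phi(S)\big)$ --- is also where the argument breaks, and the repair you sketch for the step you flag as the main obstacle is not available.

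Concretely, the claim that on a left- and down-compressed set the available aeolian moves are level-preserving is false. The aeolian compressions actually used in the paper (the three cases in the proof of Lemma~\ref{lem:uv_u_leaves_filled}) delete the set $Y\subseteq\children(y)$, which lies in level $j+1$ with $j\ge i$, and insert vertices of $X\subseteq\children(x)$, which lie in level $i$; vertices are therefore moved to strictly shallower levels. In the subcases where the border stays equal, your second coordinate $\sum_{v\in S}(d-\ell(v))$ strictly increases, so $\mu$ increases lexicographically, and your termination argument fails on exactly the moves the function $\mathrm{aeolian}$ is designed to make. Reordering the coordinates does not obviously rescue it: for these shallower-moving aeolian steps $\phi$ does decrease (the inserted vertices precede the deleted ones in the breadth-first order), but a border-preserving down exchange \emph{increases} $\phi$, so placing $\phi$ ahead of the depth coordinate breaks the down case instead. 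The conflict between the downward moves and the leftward/aeolian moves is genuine, and resolving it needs a different well-founded measure (or a different existence argument altogether, e.g.\ an extremal choice among optimal sets with respect to a quantity monotone under all three move types); as written, this step is a gap --- one the paper itself sidesteps by simply asserting that the corollary follows from its definitions.
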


While Corollary~\ref{cor:aeolian} is a simple observation, the border size $|\delta(S)|$ of an aeolian-compressed set $S$ is relatively easy to analyze, and so we may use this to find bounds on $\Phi_V(T,s)$. 
For an aeolian-compressed set $S$ and $u,v\in V(T)$ where $u$ is to the left of $v$, we may analyze how $S$ intersects with $\Desc(u)$ and $\Desc(v)$.  
We show that if any vertex of $S$ is in $\Desc(v)$, then it must follow that all vertices of $\Desc(u)\setminus\{u\}$ are in $S$. 

\begin{lemma} \label{lem:uv_u_leaves_filled}
    Suppose $T$ is a complete $q$-ary tree of depth $d$, and let $S \subseteq V(T)$ such that $S$ is aeolian-compressed. 
    Suppose that $u$ and $v$ are two vertices on the same level in $T$, that $u$ is to the left of $v$, and suppose that both $\Desc(u)$ and $\Desc(v)$ intersect with $S$. 
    We then have that either $\Desc(u) \setminus S=\emptyset$ or $\Desc(u) \setminus S=\{u\}$.
\end{lemma}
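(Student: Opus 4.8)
The plan is to argue by contradiction. Assume $S$ is aeolian-compressed, $u$ is to the left of $v$ on a common level $L_m$, both $\Desc(u)$ and $\Desc(v)$ meet $S$, and yet some $w\in\Desc(u)\setminus S$ has $w\neq u$. An aeolian-compressed set is left-compressed and down-compressed, so by Corollary~\ref{cor:left_ordered} it is left-ordered, and Lemmas~\ref{lem:downward_compressed_initial} and~\ref{lem:downward_compressed_all_levels_compressed} control the shape of $S$ inside $\Desc(u)$ and $\Desc(v)$. I will use this structure to exhibit an aeolian compression of $S$, contradicting that $S$ is aeolian-compressed and finishing the proof.

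The first step extracts a level-by-level dichotomy from left-orderedness: for every level $L_{i'}$ with $i'\ge m$, either $\Desc(u)\cap L_{i'}\subseteq S$ or $\Desc(v)\cap L_{i'}\cap S=\emptyset$. Indeed, if some $x\in\Desc(u)\cap L_{i'}$ lay outside $S$ while some $y\in\Desc(v)\cap L_{i'}$ lay in $S$, then, since every level-$i'$ descendant of $u$ precedes every level-$i'$ descendant of $v$ in the breadth-first order (iterating the ordering property on children), $x$ and $y$ would form a swappable pair on a left-ordered level, a contradiction. Applying the dichotomy at the level of $w$ gives $\Desc(v)\cap S=\emptyset$ on that level; since $\Desc(v)$ does meet $S$, Lemma~\ref{lem:downward_compressed_all_levels_compressed} forces the levels where $\Desc(v)$ meets $S$ to be an interval $L_f,\dots,L_d$, and hence $f$ lies strictly below the level of $w$. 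Feeding $L_f,\dots,L_d$ back into the dichotomy shows $\Desc(u)\cap L_{i'}\subseteq S$ for all $i'$ with $f\le i'\le d$. So the picture is: $\Desc(v)\cap S$ occupies exactly the deep band $L_f,\dots,L_d$, $\Desc(u)$ is entirely inside $S$ on that band, and every vertex of $\Desc(u)\setminus S$ (in particular $w$) lies strictly above $L_f$.

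The second step builds the aeolian compression. Let $z$ be the left-most vertex of $\Desc(v)\cap S$ on level $L_f$; since $L_{f-1}$ carries no vertex of $\Desc(v)\cap S$, the parent $\Pa(z)$ lies outside $S$, so $\Desc(z)\cap S$ meets the rest of $S$ only through $\Pa(z)\notin S$. Deleting a self-contained piece $\Desc(v')\cap S$ from the top of $\Desc(v)\cap S$ — where $v'$ is $z$ or a suitable descendant of $z$ chosen so that $\Pa(v')\notin S$ and the piece is small enough to be accommodated inside $\Desc(u)\setminus S$ — removes all of $\delta(S)\cap\Desc(v')$ from the vertex-border and creates no new border vertex, hence drops $|\delta(S)|$ by at least $|\delta(S)\cap\Desc(v')|$ while removing $|\Desc(v')\cap S|\ge 1$ vertices from $S$. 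I then re-insert the same number of vertices into the unoccupied part of $\Desc(u)$ (which is non-empty, as it contains $w$), filling it in left-most, then down-most, then aeolian priority. The resulting set $S'$ agrees with $S$ off $\Desc(u)\cup\Desc(v)$, satisfies $|S'|=|S|$, and contains more vertices of $\Desc(u)$ than $S$; it remains to check $|\delta(S')|\le|\delta(S)|$, which would make $S'$ an aeolian compression of $S$ — the desired contradiction. Here one uses Lemma~\ref{lem:downward_compressed_initial} to pin down the (left/down/aeolian-compressed) shape of $\Desc(v')\cap S$ and of the region of $\Desc(u)$ being filled, and checks that the re-insertion creates no more border vertices than the deletion destroyed.

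I expect this border accounting to be the main obstacle. Unlike the treeswap argument of Lemma~\ref{lem:swappable_for_compression} and the exchange arguments of Lemmas~\ref{lem:downward_compressed_initial} and~\ref{lem:downward_compressed_all_levels_compressed}, the relocation here necessarily moves vertices out of the deep levels of $\Desc(v)$ and into shallow levels of $\Desc(u)$, so the level profile of $S$ genuinely changes and there is no level-by-level bijection to exploit; the count must be carried out directly, matching the compressed shape of $\Desc(v')\cap S$ against the staircase configurations of Lemma~\ref{lem:downward_compressed_initial}. As with those earlier lemmas, I would also expect the case $q=2$ to require a short separate treatment, since there the piece $\Desc(v')\cap S$ can be thin and the bookkeeping is tightest.
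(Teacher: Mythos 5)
Your first step (the level-by-level dichotomy from left-orderedness, plus Lemma~\ref{lem:downward_compressed_all_levels_compressed} forcing $\Desc(v)\cap S$ to occupy a band $L_f,\dots,L_d$ with $\Desc(u)$ full on that band) is correct and matches the opening of the paper's proof. The gap is in the second step: the entire content of the lemma is the verification that some explicit move is a compression, and you defer exactly that (``it remains to check $|\delta(S')|\le|\delta(S)|$''). Moreover, the move you sketch is unlikely to admit such a verification. Deleting a ``self-contained'' piece $\Desc(v')\cap S$ with $\Pa(v')\notin S$ destroys only the border vertices lying inside $\Desc(v')$ (plus possibly $\Pa(v')$), and by the down-compressed structure such a piece can contain many vertices while contributing very few border vertices; re-inserting that many vertices into the scattered holes of $\Desc(u)$ above $L_f$ can create up to $q$ new border vertices (missing children) per inserted vertex, plus new border parents, so the net change can easily be positive. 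Your side condition that $v'$ be chosen ``small enough to be accommodated inside $\Desc(u)\setminus S$'' with $\Pa(v')\notin S$ need not even be satisfiable: if $\Desc(z)\cap S$ is connected, the only such piece is all of it, which may far exceed $|\Desc(u)\setminus S|$. So as stated the construction is both under-specified and not border-controlled.

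The paper avoids bulk relocation entirely. It takes $i$ to be the topmost incomplete level of $\Desc(u)$, $x$ the left-most vertex of $\Desc(u)\cap L_{i-1}$ with a missing child, and $X=\children(x)\setminus S$ (every member of $X$ already has all its children in $S$, so adding it removes a border vertex at the cost of at most the single vertex $x$). On the $\Desc(v)$ side it splits into two cases: if $\Desc(v)\cap S$ reaches up to within one level of $L_i$, then $x\in S$ and a single $u'$--$v'$ exchange with $u'\in X$, $v'\in\Desc(v)\cap S$ already contradicts aeolian-compressedness; otherwise it takes the last pair of consecutive empty levels $L_{j-1},L_j$ in $\Desc(v)$, the right-most $y\in\Desc(v)\cap L_j$ with $Y=\children(y)\cap S\neq\emptyset$, and exchanges $\min(|X|,|Y|)\le q$ vertices between $Y$ and $X$; because each vertex of $Y$ has a child in $S$ (Lemma~\ref{lem:downward_compressed_all_levels_compressed}) and $y$, $\Pa(y)$ are outside $S$, the border change is computed exactly and is never positive. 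A final short argument ($u$--$v$ exchange versus down-compression) rules out $\Desc(u)\setminus S$ being a single non-root vertex. This tightly localized choice of $X$ and $Y$ is the missing idea in your proposal; without it, or some equally controlled substitute, the border accounting you flag as ``the main obstacle'' does not go through.
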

\begin{proof}
    By Lemma~\ref{lem:downward_compressed_all_levels_compressed}, $\Desc(v)\cap S \cap L_d$ is non-empty. 
    As $S$ is left-compressed, all vertices in $\Desc(u) \cap L_d$ must be in $S$, and so     
    \(
    \Desc(u)\cap S \cap L_d = \Desc(u) \cap L_d.
    \)

    If we assume, for the sake of contradiction, that $|\Desc(u) \setminus S|\geq 2$, then it follows from the above argument that there is some level $i$ where \(\Desc(u)\cap S \cap L_{i} \neq \Desc(u) \cap L_{i}\) and such that for all $i'>i$, \(\Desc(u)\cap S \cap L_{i'} = \Desc(u) \cap L_{i'}\). 
    Let $x$ be the left-most vertex in $\Desc(u) \cap L_{i-1}$ such that $\children(x)\cap S \neq \children(x)$. 
    Let $X=\children(x) \setminus S$. 

We will make observations about $\Desc(v)$, although this will break into two cases. 
    In the first case, suppose that no $j\geq i$ has $\Desc(v)\cap S\cap L_j =  \emptyset$ and $\Desc(v)\cap S\cap L_{j-1} = \emptyset$. 
    This gives that $x\in S$, as otherwise $\Desc(v)\cap S\cap L_i = \emptyset$ and $\Desc(v)\cap S\cap L_{i-1} = \emptyset$, since $S$ is left-compressed. 
    However, this implies that the $u'$--$v'$ swap $S'$ between a $u' \in X$ and $v' \in \Desc(v)\cap S$ is a compression, as then $\delta(S') \subseteq (\delta(S)\setminus \{u'\} )\cup \{v'\}$ with $u' \in \delta(S)$, and so $|\delta(S')| \leq |\delta(S)|$. 
    This contradicts the fact that $S$ is aeolian-compressed. 
    
    Thus, we may assume that the second case occurs, which is that there is a $j\geq i$ such that $\Desc(v)\cap S\cap L_j = \emptyset$ and $\Desc(v)\cap S\cap L_{j-1} = \emptyset$. 
    Assume that $j$ is as large as possible, but noting that $j<d$, or else $S$ would not be down-compressed. 
    Let $y$ be the right-most vertex in $\Desc(v)\cap L_{j}$ such that $\children(y) \cap S \neq \emptyset$. 
    Let $Y = \children(y) \cap S$. 

    Consider the following three cases, each with a corresponding aeolian compression:
    \begin{enumerate}
        \item If $|X|=|Y|$, remove $Y$ from $S$ and add $X$ to $S$.  
        \item If $|X|>|Y|$, remove $Y$ from $S$ and add $|Y|$ vertices in $X$ to $S$ (call this set of $|Y|$ vertices $X'$).  
        \item If $|X|<|Y|$, remove $|X|$ vertices of $Y$ (call this set of $|X|$ vertices $Y'$) from $S$ and add $X$ to $S$. 
    \end{enumerate}
    In all three subcases, the resulting set $S'$ has the same number of vertices as $S$. 

    We will show that each of these three operations are compressions, and hence, aeolian compressions. 
    Each child of a vertex in $X$ is in $S$ by the definition of $i$, and at least one child of each vertex in $Y$ is in $S$ by Lemma~\ref{lem:downward_compressed_all_levels_compressed}. 
    For case $1$, we have that $\delta(S') = (\delta(S) \cup Y)\setminus (X\cup \{y\})$, except in the case that $|X|=|Y|=q$, where $\delta(S') = (\delta(S) \cup Y \cup \{x\})\setminus ( X \cup \{y\})$.
    In both of these cases, the border size has not changed. 
    For case $2$, $\delta(S') = \delta(S) \cup Y \setminus (X' \cup \{y\})$, and so the border size has decreased by $1$. 
    For case $3$, $\delta(S') = \delta(S) \cup Y' \setminus X$, and so the border size has not changed. 
    
    Therefore, in each possible case, an aeolian compression was found, and so by contradiction, it must be that $|\Desc(u) \setminus S|\leq 1$, as required. 
    In the case that $|\Desc(u) \setminus S| = 1$, suppose for contradiction that $\Desc(u) \setminus S = \{v\}$ for some $v \neq u$. 
    The $u$--$v$ exchange is a compression, contradicting the fact that $S$ is down-compressed.
    Therefore, $\Desc(u) \setminus S= \emptyset$ or $\Desc(u) \setminus S= \{u\}$, as required. 
\end{proof}

\section{Isoperimetric Peak Results}
\subsection{Lower bounds}
Lemma~\ref{lem:uv_u_leaves_filled} shows that any aeolian-compressed vertex subset has an incredibly restricted structure. 
Since any vertex subset can be compressed to be an aeolian-compressed vertex subset with an equal or smaller-sized vertex border, it is now possible to analyze the minimum size of the vertex border of all vertex subsets of a given cardinality. 
Recall that the $n$th child of vertex $u$ is denoted $u^{(n)}$. 
Also, take $t_i$ to be the number of vertices in a complete $q$-ary tree of depth $d-i$; that is: 
    \[t_i = \frac{q^{d-i+1}-1}{q-1}.\]

We will briefly describe the general mechanism of the proof when $q \geq 5$; the proofs when $q<5$ will modify this mechanism.  
Let $S$ be an aeolian-compressed vertex subset of some complete $q$-ary tree, let $u_i$ be a vertex of the tree on level $i$, and suppose $S$ and $u_i$ are defined such that $\Desc(u_i)\cap S$ contains between $2 t_{i+1}+1$ and $3(t_{i+1}-1)-1$ vertices. 
Using Lemma~\ref{lem:uv_u_leaves_filled}, each vertex in $\Desc(u_i^{(j)})\setminus \{u_i^{(j)}\}$ must be in $S$ for $j \in  \{ 1,2\}$, and no vertex in $\Desc(u_i^{(j)})$ can be in $S$ for $j \geq 4$.
Additionally, we will see that the vertices $\{u_i^{(1)},u_i^{(2)}\}$ must also be in $S$ to prevent the vertex border from being too large. 
As $q\geq 5$, if the vertex $u_i$ was itself included in $S$, then the border would increase in size, and so $u_i \in \delta(S)$.
For a recursion, we find $\Desc(u_{i+1}) = \Desc(u_i^{(3)})$ contains from $2t_{i+2}+1$ to $3(t_{i+2}-1)-1$ vertices in $S$. 
As a consequence, each level except level $d$ must contain at least one vertex in the vertex border. 
It follows that the vertex border will have a cardinality of at least $d$. 

\begin{theorem} \label{thm:lower_q5}
    For $T$ a complete $q$-ary tree of depth $d$ with $q\geq 5$, $\Phi_V(T) \geq d.$
\end{theorem}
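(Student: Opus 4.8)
The plan is to exhibit one volume $s$ with $\Phi_V(T,s)\ge d$; since $\Phi_V(T)=\max_{s'}\Phi_V(T,s')$, this suffices. Using Corollary~\ref{cor:aeolian}, fix an aeolian-compressed $S$ with $|S|=s$ and $|\delta(S)|=\Phi_V(T,s)$, and produce $d$ vertices of $\delta(S)$ on the $d$ distinct levels $0,1,\dots,d-1$. The cases $d\le 1$ are immediate (for $d=1$ a single leaf has border of size $1$), so assume $d\ge 2$, and recall $t_i=\frac{q^{d-i+1}-1}{q-1}$ is the size of a complete $q$-ary subtree rooted on level $i$.

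Choose $s$ in the interval
\[ 2\,(t_1+\cdots+t_{d-1})+1 \;\le\; s \;\le\; 2\,(t_1+\cdots+t_{d-2})+3(t_{d-1}-1)-1 , \]
which is nonempty precisely because $t_{d-1}=q+1\ge 6$. Set $u_0$ to be the root and, recursively, $u_{i+1}:=u_i^{(3)}$. The choice of $s$ is made so that at every node $u_i$ of this path the count $|\Desc(u_i)\cap S|=s-2(t_1+\cdots+t_i)$ lies in the window $[\,2t_{i+1}+1,\ 3(t_{i+1}-1)-1\,]$ from the sketch: the left endpoints of these windows increase in $i$ and, since $q\ge 4$, the right endpoints decrease in $i$, so it is enough to satisfy the left endpoint at $i=d-2$ and the right endpoint at $i=d-2$, which is exactly the displayed inequality.

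The heart of the argument is the inductive step: assuming $|\Desc(u_i)\cap S|$ lies in its window, one shows $u_i\in\delta(S)$ and that the recursion continues. First, since this count exceeds $2t_{i+1}$, at least three of the subtrees $\Desc(u_i^{(1)}),\dots,\Desc(u_i^{(q)})$ meet $S$; applying Lemma~\ref{lem:uv_u_leaves_filled} to consecutive pairs of children of $u_i$ (all on level $i+1$) forces the earlier of these subtrees to lie in $S$ apart from (possibly) their roots, and were $\Desc(u_i^{(4)})$ to meet $S$ the first three subtrees would be full apart from their roots, forcing $|\Desc(u_i)\cap S|\ge 3(t_{i+1}-1)$ and violating the window's right endpoint. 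Hence $\Desc(u_i^{(j)})\cap S=\emptyset$ for every $j\ge 4$. Next, $u_i\notin S$: were $u_i\in S$, Lemma~\ref{lem:downward_compressed_initial} would allow at most two children of $u_i$ outside $S$, while $q\ge 5$ forces at least $q-3\ge 2$ children outside $S$, and Lemma~\ref{lem:downward_compressed_initial}(3) would moreover require one such out-child to have a child in $S$, contradicting the previous sentence. Knowing $u_i\notin S$, the window's right endpoint forces $|\Desc(u_i^{(3)})\cap S|$ to be small enough that $\Desc(u_i^{(3)})$ has a non-$S$ vertex on some level $\ge i+2$; a short exchange argument using that $S$ is down-compressed then forces $u_i^{(1)},u_i^{(2)}\in S$, hence $\Desc(u_i^{(1)})\cup\Desc(u_i^{(2)})\subseteq S$, hence $u_i\in\delta(S)$, and hence the identity $|\Desc(u_i^{(3)})\cap S|=|\Desc(u_i)\cap S|-2t_{i+1}$, so the recursion passes to $u_{i+1}$. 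Running this for $i=0,\dots,d-2$ yields $u_0,\dots,u_{d-2}\in\delta(S)$. Finally, for $u_{d-1}:=u_{d-2}^{(3)}$ the endpoints of the $s$-interval force $1\le|\Desc(u_{d-1})\cap S|\le t_{d-1}-4<|\Desc(u_{d-1})|$, so the star $\Desc(u_{d-1})$ meets $S$ without being contained in it; down-compressedness rules out $u_{d-1}\in S$ (else a leaf outside $S$ could be exchanged for $u_{d-1}$ without increasing the border), and since a leaf of the star lies in $S$ we get $u_{d-1}\in\delta(S)$ on level $d-1$. Altogether $|\delta(S)|\ge d$, so $\Phi_V(T,s)\ge d$ and $\Phi_V(T)\ge d$.

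The step I expect to be hardest is the bookkeeping inside the inductive step: combining the structural conclusions of Lemmas~\ref{lem:downward_compressed_initial}, \ref{lem:downward_compressed_all_levels_compressed} and \ref{lem:uv_u_leaves_filled} with the precise width of the window $[\,2t_{i+1}+1,\ 3(t_{i+1}-1)-1\,]$ so that the third subtree is always nonempty (so the descent never stalls) while a fourth subtree is never occupied (so $u_i$ must be a border vertex), and in particular nailing the exchange argument that pins $u_i^{(1)},u_i^{(2)}$ into $S$. The role of $q\ge 5$ is twofold: it is what makes the left and right endpoints of the $s$-interval compatible at every level (this reduces to $t_{d-1}=q+1\ge 6$), and it is what forces $u_i\notin S$ via Lemma~\ref{lem:downward_compressed_initial}.
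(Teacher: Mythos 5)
Your overall architecture is the paper's: fix one well-chosen cardinality, take an optimal aeolian-compressed $S$ via Corollary~\ref{cor:aeolian}, follow the path of third children, use Lemma~\ref{lem:uv_u_leaves_filled} to fill the first two subtrees at each level, use the cardinality window to keep the fourth subtree empty, and collect one border vertex per level. The genuine gap is at the step you yourself flag as the crux: the claim that a ``short exchange argument using that $S$ is down-compressed'' forces $u_i^{(1)},u_i^{(2)}\in S$, which is what gives you the exact identity $|\Desc(u_{i+1})\cap S|=|\Desc(u_i)\cap S|-2t_{i+1}$ and hence the same fixed window $[\,2t_{i+1}+1,\ 3(t_{i+1}-1)-1\,]$ at every level. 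Lemma~\ref{lem:uv_u_leaves_filled} only yields $\Desc(u_i^{(j)})\setminus\{u_i^{(j)}\}\subseteq S$ for $j=1,2$; whether the roots themselves lie in $S$ is not decided by any of the compression lemmas, and the proposed exchange does not settle it. Down-compressedness is the wrong tool here: exchanging a deep $w\in\Desc(u_i^{(3)})\cap S$ for $u_i^{(1)}$ moves a vertex up, so it contradicts down-compressedness only if the border strictly drops, and it is an aeolian compression only if the border does not increase. Tracking the change: inserting $u_i^{(1)}$ deletes $u_i^{(1)}$ from $\delta(S)$ but may add $u_i$, and this costs nothing only when $u_i\in\delta(S)$ already, i.e.\ when $u_i^{(2)}$ or $u_i^{(3)}$ is in $S$ --- part of what you are trying to prove; deleting $w$ may add $w$ to the border. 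In the sub-case $u_i^{(1)},u_i^{(2)},u_i^{(3)}\notin S$ with the occupied part of $\Desc(u_i^{(3)})$ consisting of full subtrees all of whose leaves have parents in $S$ (for instance $S\cap\Desc(u_i^{(3)})=\Desc(x_1)\cup\Desc(x_2)$ for siblings $x_1,x_2$ whose common parent is outside $S$), every one-vertex removal from $\Desc(u_i^{(3)})\cap S$ raises the border by one, so every candidate exchange has net change $+1$ and contradicts neither down- nor aeolian-compressedness; and the natural repair that removes two vertices and inserts both $u_i^{(1)}$ and $u_i^{(2)}$ is not an aeolian compression under the paper's definition, since the inserted vertices do not lie in a single $\Desc(u)$. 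Ruling such configurations out needs a real argument (combining the cardinality and left-ordering down the recursion), not a short exchange.

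This is precisely the difficulty the paper's proof sidesteps: it never shows $u_i^{(1)},u_i^{(2)}\in S$. Instead it carries the counter $b_i$ of levels where both roots are in $S$, proves $|\delta(S)\setminus\Desc(u_i)|=2i-b_i$ (two border vertices on a level where the roots are absent, one where they are present), and lets the window widen accordingly, $\bar{T}_i\le|S\cap\Desc(u_i)|<\bar{T}_i+2i-2b_i$, finishing with the dichotomy that either $\delta_{d-1}\ge d$ already or $b_{d-1}=d-1$ and the two remaining vertices of $S$ in $\Desc(u_{d-1})$ supply the last border vertex. To keep your cleaner fixed-window version you must either genuinely prove the root-inclusion claim or adopt this bookkeeping. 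A secondary, fixable slip: with lower endpoint $2t_{i+1}+1$ the count alone does not force a third subtree to meet $S$, since $\Desc(u_i)\cap S$ could be exactly $\{u_i\}\cup\Desc(u_i^{(1)})\cup\Desc(u_i^{(2)})$; you need Lemma~\ref{lem:downward_compressed_initial} to exclude that configuration, or a lower endpoint of $2t_{i+1}+3$ as in the paper. Your window-compatibility arithmetic and the level-$(d-1)$ endgame are fine, but only conditionally on the unproven structural claim.
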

\begin{proof}
    Define $T_i = \sum_{i'=1}^{i} 2t_{i'}$ and the corresponding $\bar{T}_i = \sum_{i'=i+1}^{d} 2t_{i'}$ for each $i \in [0,d]$, where we note that $\bar{T}_0 = T_i + \bar{T}_i$ for each $i \in [0,d]$.
    Suppose that $S$ is an aeolian-compressed subset of $V(T)$ with $|S|=\bar{T}_0$ and $|\delta(S)| = \Phi_V(T,\bar{T}_0)$. 
    We will show that $|\delta(S)|\geq d$, which implies that $\Phi_V(T) \geq  \Phi_V(T,\bar{T}_0) \geq d$, as required.

    This proof will show that various subsets of $V(T)$ must be in $S$ using a level-by-level argument, starting with $i=0$ and going through to $i=d$. 
    Let $u_0, \ldots, u_d$ be the root-to-leaf path such that $u_{i+1}= u_i^{(3)}$, where $u_0$ is the root vertex.
    Let $b_i$ be the count of the number of times that 
    \(
        \{u_{i'}^{(1)},u_{i'}^{(2)}\}
        \subseteq 
        S
    \)
    for $i'$ with $0 \leq i' < i$. 
    Let $\delta_i = |\delta(S) \setminus \Desc(u_i)|$. 
    
    Assume the inductive hypothesis that, for a given $i\geq 0$, first that 
    \[
    \bigcup_{0 \leq i'<i}
        \big(
            \Desc(u_{i'}^{(1)})
                \cup 
            \Desc(u_{i'}^{(2)})
        \big)
        \setminus
        \{u_{i'}^{(1)},u_{i'}^{(2)}\}
    \subseteq 
    S
    ,\]
     second that
    \(\delta_i = 2i-b_i\), and third that 
    \(
        \bar{T}_i 
        \leq 
            |S \cap 
            \Desc(u_i)|
            < 
            \bar{T}_i
                +2i-2b_i.
    \)
    The inductive hypothesis clearly holds for $i=0$. 

    We will now show that each of these properties holds for $i+1$ as long as $i+1 <d$.
    By the inductive hypothesis, the number of vertices in $S$ that are also in $\Desc(u_i)$ is greater than $\overline{T}_i \geq 2t_{i+1} + 2t_d = 2t_{i+1}+2$, since $i+1<d$. 
    We then have that some vertex in 
    \(
        \Desc(u_i)
        \setminus
        \big(
            \Desc(u_i^{(1)})\cup \Desc(u_i^{(2)})\cup \{u_i\}
        \big)
    \)
    must be in $S$, since this later set contains exactly $2t_{i+1}+1$ vertices. 
    Using this fact, Lemma~\ref{lem:uv_u_leaves_filled} yields that $\Desc(u_i^{(1)})\cup \Desc(u_i^{(2)}) \setminus \{u_i^{(1)},u_i^{(2)}\}\subseteq S$. 
    Combining with information from the inductive assumption, we have that 
    \[\bigcup_{0 \leq i'<i+1}
    \Desc(u_{i'}^{(1)})\cup \Desc(u_{i'}^{(2)})
    \setminus\{u_{i'}^{(1)},u_{i'}^{(2)}\}
    \subseteq S. \]
    Hence, we have shown the first property required for the induction. 

    We now focus on the vertices in 
    \(\big(  \Desc(v_i^{(4)}) \cup \Desc(v_i^{(5)})    \big)
        \cap S.
    \)
    Suppose, for the sake of contradiction, that this set is not empty. 
    Lemma~\ref{lem:uv_u_leaves_filled} yields that $\Desc(u_i^{(1)})\cup \Desc(u_i^{(2)})\cup  \Desc(u_i^{(3)})\setminus \{u_i^{(1)},u_i^{(2)},u_i^{(3)}\}\subseteq S$. 
    This implies that there must be at least $(3t_{i+1}-3)+1$ vertices in $\Desc(u_i)\cap S$.
    However, by the third assumption made for induction, this gives that $3t_{i+1}-2<\bar{T}_i+2i - 2b_i = 2t_{i+1} + \bar{T}_{i+1}+2i - 2b_i$, which rearranges as $t_{i+1}<\bar{T}_{i+1}+2i - 2b_i+2= \bar{T}_{i+1}+2\delta_i-2i+2\leq \bar{T}_{i+1}+2(d-i)$, as we may assume that $\delta_i \leq d-1$ since otherwise the proof would be finished. 
    To see why this is not possible, we now prove that 
    $t_{i+1}\geq  \bar{T}_{i+1}+2(d-i)$ using induction. 
    Observe that $t_{i+1}= 1+qt_{i+2} \geq 5t_{i+2} \geq 4t_{i+2} + \overline{T}_{i+2}$, where this last inequality can be seen from the definition of $\overline{T}_{i+1}$ by noting that $\overline{T}_{i+2} \leq 3 t_{i+1} \leq t_{i+2}$. 
    Continuing this calculation, we find that $4t_{i+2} + \overline{T}_{i+2} = 2t_{i+2} + \overline{T}_{i+1} \geq \overline{T}_{i+1} + 2(d-i)$, where this last inequality holds as $t_{i+2}$ counts the number of vertices in a complete $q$-ary tree of depth $d-i-2$, which contains $d-i-2$ vertices in a root-to-leaf path and at least $2$ addition vertices adjacent to the root vertex, for a total of at least $d-i$ vertices. 
    This shows that $t_{i+1} \geq \overline{T}_{i+1}+2(d-i)$. 
    We therefore get the contradiction required, and so \(\big(  \Desc(v_i^{(4)}) \cup \Desc(v_i^{(5)})    \big)
        \cap S
    \)
    is empty. 

It follows that $u_{i} \notin S$, or else Lemma~\ref{lem:downward_compressed_initial} would be contradicted, as we have shown that $u_{i}$ has two children not in $S$, both of which have no children in $S$. We also have that 
\begin{enumerate}
    \item $u_{i}^{(1)},u_{i}^{(2)}\in \delta(S)$ if $u_{i}^{(1)},u_{i}^{(2)} \notin S$; 
    \item $u_{i}^{(2)},u_{i}\in \delta(S)$ if $u_{i}^{(1)} \in S$ and $u_{i}^{(2)}\notin S$;
    \item $u_{i}^{(1)},u_{i}\in \delta(S)$ if $u_{i}^{(2)} \in S$ and $u_{i}^{(1)}\notin S$; and 
    \item $u_{i}\in \delta(S)$ if $u_{i}^{(1)}, u_{i}^{(2)} \in S$. 
\end{enumerate}
In case $4$, we have that $b_{i+1}=b_i+1$ and $\delta_{i+1}= \delta_{i}+1 = (2i+2)-(b_i+1) = 2(i+1)-b_{i+1}$. 
In the other cases, $b_{i+1} = b_i$ and $\delta_{i+1}= \delta_{i}+2 = (2i+2)-b_i = 2(i+1)-b_{i+1}$.
We have thus shown the second assumption of the inductive hypothesis. 

We note that $|S \cap \Desc(u_i)|= |S \cap \Desc(u_{i+1})|+ 2t_{i+1} -2 + \varepsilon$, where $\varepsilon$ is $0$, $1$, $1$, or $2$ in C
Cases $1$, $2$, $3$, and $4$ above, respectively, noting that $u_{i}\in S$ implies $u_{i}^{(1)},u_{i}^{(2)} \in S$ since $S$ is aeolian-compressed. 
Using this equation in the inductive assumption that $\bar{T}_i \leq |S \cap \Desc(u_i)|< \bar{T}_i+2i-2b_i$ yields that 
$\bar{T}_{i+1} \leq |S \cap \Desc(u_{i+1})| + \varepsilon-2< \bar{T}_{i+1} +2i-2b_i$. 
Since $\epsilon \leq 2$, it follows that $\bar{T}_{i+1} \leq |S \cap \Desc(u_{i+1})| + \varepsilon-2 \leq |\Desc(u_{i+1})|$, and so the left-hand inequality has been shown. 
In Cases $1$, $2$, and $3$ from above, we also have $|\Desc(u_{i+1})| + \varepsilon-2< \bar{T}_{i+1} +2i-2b_i$ implies $|\Desc(u_{i+1})| < \bar{T}_{i+1} +2(i+1)-2b_{i+1}$, since $b_{i+1}=b_{i}$ in these cases. 
In the remaining Case $4$, we have that
$|\Desc(u_{i+1})| + \varepsilon-2< \bar{T}_{i+1} +2i-2b_i$ implies $|\Desc(u_{i+1})| < \bar{T}_{i+1} +2(i+1)-2b_{i}-\varepsilon = \bar{T}_{i+1} +2(i+1)-2b_{i+1}$, since $b_{i+1}=b_i+1$ and $\varepsilon=2$. 
We therefore have that $\bar{T}_{i+1} 
\leq
|\Desc(u_{i+1})| 
<  \bar{T}_{i+1} +2(i+1)-2b_{i+1}$, which is the third and final assumption of the inductive hypothesis. 

The inductive hypothesis clearly holds for the base case, $i=0$. 
We may recursively apply the induction until $i+1=d$, at which point the properties no longer hold, and then apply the argument again. 
In this case, however, we may assume that $\delta_{d-1} = 2(d-1)-b_{d-1} \leq d-1$, otherwise $\delta_{d-1} \geq d$, and the proof would be finished.
This then yields $b_{d-1}\geq d-1$. 
Since $b_{d-1}\leq d-1$ as $b_i$ counts the number of layers that satisfy a property, this implies that $b_{d-1} = d-1$, and we have that $\{u_{i'}^{(1)},u_{i'}^{(2)}\}\subseteq S$ for $0 \leq i' < d-1$. 
As a consequence, $S \setminus \Desc(u_{d-1}) = \bigcup_{i'=0}^{d-2} \Desc(u_{i'}^{(1)})\cup \Desc(u_{i'}^{(2)})$, and so 
$S \setminus \Desc(u_{d-1})$ has cardinality $T_{d-1}$ and does not contain the vertex $u_{d-2}$. 
We also have that $\delta_{d-1}=d-1$. 
We then know that there are $2t_d = 2$ vertices in $S \cap \Desc(u_{d-1})$, and these may only be the vertices $\{v_{d-1}^{(1)},v_{d-1}^{(2)}\}$ as $S$ is aeolian-compressed.  
This now gives that $\delta(S) = \delta_d = d$, and the proof is complete. 
\end{proof}

In Theorem~\ref{thm:lower_q5}, the fact that at most three of the children of a $u_i$ could be in $S$ meant that adding $u_i$ to $S$ would increase the vertex border size. 
When $q \leq 4$, this is no longer the case in general. 
We can modify our approach when $q \in \{3,4\}$ by 
instead supposing the subgraph $\Desc(u_i)$ contains between $t_{i+1}$ and $2(t_{i+1}-1)$ vertices in $S$ for each $u_i$ of the root-to-leaf path. 
Working under this assumption, we do not know whether $u_i$ or $u_i^{(1)}$ is in $S$ or not, as these can be included without increasing the vertex border. 
As such, there is enough flexibility to ensure that for some $u_i$ on the root-to-leaf path with $u_i$ having a distance of at most $\log_q(2d)$ to a leaf, the set of vertices $\Desc(u_i)\cap S$ is empty, as otherwise, we could remove such a vertex from $S$ and instead include some $u_i$ or $u_i^{(1)}$. 
There will be one vertex in the border for each level, from level $0$ through to level $d-\lceil\log_q(2d)\rceil$, yielding the result.

\begin{theorem} \label{thm:lower_q34}
    For $T$ a complete $q$-ary tree of depth $d$ with $q \in \{3,4\}$, 
    \[
    \Phi_V(T) \geq d - \log_q(d) - \big(\log_q(2) +1\big).
    \]

\end{theorem}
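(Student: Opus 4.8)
The plan is to run the level-by-level scheme behind Theorem~\ref{thm:lower_q5}, but with the subtree ``occupancy window'' shifted downwards, reflecting that for $q\le 4$ one may add the root of a subtree, or its leftmost child, to $S$ without enlarging the border.

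Fix $\bar T_i=\sum_{i'=i+1}^{d}t_{i'}$, so $\bar T_0=\sum_{i'=1}^{d}t_{i'}$; as $q\ge 3$ one has $\bar T_0\le|V(T)|$, so by Corollary~\ref{cor:aeolian} there is an aeolian-compressed $S$ with $|S|=\bar T_0$ and $|\delta(S)|=\Phi_V(T,\bar T_0)$. I would follow a fixed root-to-leaf path $u_0,\dots,u_d$ with $u_{i+1}=u_i^{(2)}$ and maintain the invariant
\[
t_{i+1}+2\ \le\ |S\cap\Desc(u_i)|\ \le\ 2(t_{i+1}-1).
\]
Its lower half forces a vertex of $S$ in $\Desc(u_i)$ outside $\{u_i\}\cup\Desc(u_i^{(1)})$; with left-orderedness this forces $\Desc(u_i^{(2)})\cap S\neq\emptyset$, and Lemma~\ref{lem:uv_u_leaves_filled} applied to $u_i^{(1)},u_i^{(2)}$ then gives $\Desc(u_i^{(1)})\setminus\{u_i^{(1)}\}\subseteq S$. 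Its upper half, via a second use of Lemma~\ref{lem:uv_u_leaves_filled}, forbids any vertex of $S$ in $\Desc(u_i^{(j)})$ with $j\ge 3$. So the only freedom left is whether $u_i$ and $u_i^{(1)}$ lie in $S$, and down-compressedness excludes $u_i\in S$, $u_i^{(1)}\notin S$ (the exchange $S\mapsto(S\setminus\{u_i\})\cup\{u_i^{(1)}\}$ would strictly shrink the border, since $u_i^{(3)},\dots,u_i^{(q)}$ leave it). In each of the three remaining cases one finds a vertex of $\delta(S)$ in $\Desc(u_i)\setminus\Desc(u_{i+1})$: $u_i^{(3)}$ if $u_i\in S$; $u_i^{(1)}$ if $u_i,u_i^{(1)}\notin S$; and $u_i$ itself if $u_i\notin S$, $u_i^{(1)}\in S$. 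Since the sets $\Desc(u_i)\setminus\Desc(u_{i+1})$ are pairwise disjoint, running this for $i=0,\dots,m-1$ already yields $|\delta(S)|\ge m$.

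To see how far the recursion runs, put $\epsilon_i=1-[u_i\in S]-[u_i^{(1)}\in S]\in\{-1,0,1\}$; the occupancy identity $|S\cap\Desc(u_{i+1})|=|S\cap\Desc(u_i)|-(t_{i+1}-1)-[u_i\in S]-[u_i^{(1)}\in S]$ telescopes to $|S\cap\Desc(u_i)|=\bar T_i+\sum_{i'<i}\epsilon_{i'}$. The drift is controlled by the border already seen: a level with $\epsilon_{i'}=-1$ is one with $u_{i'},u_{i'}^{(1)}\in S$, contributing the $q-2$ distinct border vertices $u_{i'}^{(3)},\dots,u_{i'}^{(q)}$, so $-\delta_i/(q-2)\le\sum_{i'<i}\epsilon_{i'}\le\delta_i$, where $\delta_i=|\delta(S)\setminus\Desc(u_i)|\le|\delta(S)|$. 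Assuming $|\delta(S)|<D$ with $D=d-\log_q(d)-\log_q(2)-1=d-\log_q(2qd)$, the drift stays below $D$, and from the closed forms $t_{i+1}=\tfrac{q^{d-i}-1}{q-1}$ and $\bar T_{i+1}=\tfrac{q^{d-i}-q}{(q-1)^2}-\tfrac{d-i-1}{q-1}$ one verifies that for every $i$ with $d-i>\log_q(2qd)$ (i.e.\ $q^{d-i}>2qd$) both halves of the invariant hold: the upper half reduces to $\bar T_{i+1}+D\le t_{i+1}-2$ and the lower half to $\bar T_{i+1}\ge D/(q-2)+2$, each a consequence of $q^{d-i}>2qd$ and $q\in\{3,4\}$ once $d$ exceeds a small absolute constant. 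Hence the recursion runs for all $i<D$, producing $\lceil D\rceil$ distinct border vertices, contradicting $|\delta(S)|<D$. Therefore $\Phi_V(T)\ge|\delta(S)|\ge D$, as claimed.

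The real work is this last dovetailing: showing that the exponential growth of $t_{i+1}$ outpaces both the $O(d)$ drift $\sum\epsilon_{i'}$ and the polynomial tail $\bar T_{i+1}$ all the way down to distance $\log_q(2qd)$ from the leaves, with no stray constant eroding the claimed $-(\log_q 2+1)$; the $q=3$ versus $q=4$ split (an $\epsilon_i=-1$ level yields $q-2$ border vertices, which matters both for bounding the downward drift and, when $q=3$, for the count) surfaces here and must be tracked carefully. By contrast, the structural inputs---Lemma~\ref{lem:uv_u_leaves_filled}, the down-compression exchange ruling out $u_i\in S$, $u_i^{(1)}\notin S$, and the choice of one border vertex per level---are routine once the window is in place, and the finitely many small $d$, for which $D$ is not large, are handled by hand.
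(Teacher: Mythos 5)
Your proposal is essentially correct, but it implements the bound along a different route than the paper's written proof, even though both use the same ingredients: aeolian-compressed sets, Lemma~\ref{lem:uv_u_leaves_filled}, the path $u_{i+1}=u_i^{(2)}$, and one border vertex per level drawn from $\{u_i,\,u_i^{(1)},\,u_i^{(3)}\}$. You port the induction machinery of Theorem~\ref{thm:lower_q5}: take $|S|=\sum_{i'=1}^{d}t_{i'}$, maintain the two-sided window $t_{i+1}+2\le|S\cap\Desc(u_i)|\le 2(t_{i+1}-1)$, and control the drift $\sum_{i'<i}\epsilon_{i'}$ by charging each nonzero $\epsilon_{i'}$ to border vertices already produced, finally contradicting the hypothesis $|\delta(S)|<D$. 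The paper instead fixes the cardinality $|S|=\sum_{i=1}^{D}(t_i-1)$ with $D=d-\lceil\log_q(2d)\rceil$; with that choice, direct counting shows that no $\Desc(u_i^{(3)})$ meets $S$ while every $\Desc(u_j)$ with $j<D$ does, so the structure $\Desc(u_{j-1}^{(1)})\setminus\{u_{j-1}^{(1)}\}\subseteq S$ and the count of $D$ border vertices fall out with no occupancy window, no drift bookkeeping, and no contradiction hypothesis on $|\delta(S)|$. Your route buys uniformity with the $q\ge5$ case (it is in fact the modification the paper sketches in prose before its proof); the paper's choice of $|S|$ buys a shorter, induction-free verification that sidesteps the $q=3$ versus $q=4$ drift constants you have to track.

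Two points to make explicit in a write-up. First, Lemma~\ref{lem:uv_u_leaves_filled} requires \emph{both} descendant sets to meet $S$, so passing from ``some vertex of $S$ lies in $\Desc(u_i^{(j)})$ for some $j\ge 2$'' to ``$\Desc(u_i^{(1)})$ and $\Desc(u_i^{(2)})$ both meet $S$'' needs the prefix property of left-ordered sets (on every level, no vertex of $S$ lies to the right of a vertex outside $S$); you gesture at this with ``left-orderedness,'' but it is the load-bearing step and should be spelled out. Second, the theorem carries no largeness hypothesis on $d$, so the finitely many small $d$ at which your numerical inequalities fail must genuinely be checked; they are exactly the $d$ for which the claimed bound is at most about $2$, so the checks are easy, but they are part of the proof.
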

\begin{proof}
    Define $D = d - \lceil \log_q(2d)\rceil$. 
    Let $S \subseteq V(T)$ with $|S| = \sum_{i=1}^{D} (t_i 
 -1)$  such that  $|\delta(S)| = \Phi_V(T,|S|)$. 
    By Corollary~\ref{cor:aeolian}, we may assume without loss of generality that $S$ is aeolian-compressed. 
    Let $u_0, u_1,\ldots, u_d$ be the root-to-leaf path in $T$ with $u_{i+1} = u_i^{(2)}$ for $0 \leq i \leq d-1$.  

    Suppose for the sake of contradiction that there is some $I$ with $\Desc(u_I^{(3)}) \cap S \neq \emptyset$. 
    This implies $\Desc(u_{i})$ intersects with $S$ for all $1 \leq i \leq I$, since $u_I^{(3)} \in \Desc(u_{i})$. 
    Since $u_{i-1}^{(1)}$ is to the left of $u_{i}$, 
    it follows from Lemma~\ref{lem:uv_u_leaves_filled} for each $1 \leq i \leq I$ that 
    $\Desc(u_{i-1}^{(1)})\setminus \{u_{i-1}^{(1)}\} \subseteq S$, 
    and so 
    $|\Desc(u_{i-1}^{(1)})\cap S| \geq |\Desc(u_{i-1}^{(1)})|-1 = t_{i}-1$. 
    Similarly, $|\Desc(u_{I}^{(1)})\cap S| \geq t_{I-1}-1$ and $|\Desc(u_{I}^{(2)})\cap S| \geq t_{I-1}-1$ 
    since $u_I^{(1)}$ and $u_I^{(2)}$ are to the left of $u_I^{(3)}$ and $\Desc(u_I^{(3)}) \cap S \neq \emptyset$. 
    Since each of the $\Desc(u_{i}^{(1)})$ are disjoint from each other and from $\Desc(u_I^{(2)})$, this implies that $S$ has cardinality at least $(t_{I+1}-1) + \sum_{i=1}^{I+1} (t_{i}-1) > \sum_{i=1}^{D} (t_i -1)  = |S|$, as evidently, $(t_{I+1}-1)  > \sum_{i=I+2}^{D} (t_i -1)$. This gives a contradiction. 

    Now, suppose for the sake of contradiction that there is some $I<D$ such that $\Desc(u_I) \cap S = \emptyset$. 
    The set $S$ must be a subset of the vertices $\{u_0, \ldots, u_{I-1}\} \cup \bigcup_{i=0}^{I-1}\Desc(u_i^{(1)})$, since this is the set of vertices possible in $S$ when we exclude the vertices in $\Desc(u_I)$ and in $\Desc(u_i^{(3)}) \cap S \neq \emptyset$ for all $i$.
    The number of vertices in  $\{u_0, \ldots, u_{I-1}\} \cup \bigcup_{i=0}^{I-1}\Desc(u_i^{(1)})$ is 
\begin{eqnarray*}
I + \sum_{i=1}^I t_i  &= & 2I + \sum_{i=1}^I (t_i-1) \\
    &\leq & 2I + |S| - (t_{D}-1) \\
    &\leq & 2I +|S| - \frac{q^{\log_q{2d} +1}-1}{q-1}+1 \\
   &\leq & 2I +|S| - \frac{2dq-1}{q-1}+1 \\
    &\leq & |S| +2I - 2d+1 \\
    &\leq & |S|-1.
\end{eqnarray*}
    However, we then have that $|S| \leq |S|-1,$ and so we have the required contradiction. 

    Therefore, we must have $\Desc(u_j) \cap S \neq \emptyset$ for all $j$ with $0 \leq j < D$. 
    It then follows that 
    $\Desc(u_{j-1}^{(1)})\setminus \{u_{j-1}^{(1)}\} \subseteq S$ for $1 \leq j \leq D-1$, since $u_{j-1}^{(1)}$ is to the left of $u_{j}$. 
    The set of vertices $S' = \bigcup_{j=1}^{D-1} \Desc(u_{j-1}^{(1)}) \setminus \{u_{j-1}\}^{(1)} \subseteq S$ consists of $|S| -  (t_{D} -1)$ vertices.
    If $\Desc(u_{D-1}^{(2)})$ intersected with $S$, then $\Desc(u_{D}^{(1)})\setminus \{u_{D}^{(1)}\} \subseteq S$, but this implies that there would be an addition set of $(t_{D} -1)$ vertices of $S$ in $\Desc(u_{D}^{(1)})$ and at least one additional vertex of $S$ in $\Desc(u_{D-1}^{(2)})$, meaning $S$ would contain at least $|S|+1$ vertices. This is a contradiction. 
    Therefore, $\Desc(u_{D-1}^{(2)})$ does not intersect with $S$. 
    As a consequence, the $t_D-1$ vertices in $S$ that are not in $S'$ must occur in the set of vertices $\bigcup_{i=0}^{D-1}\{u_i,u_i^{(1)}\} \cup \big( \Desc(u_{D-1}^{(1)}) \setminus u_{D-1}^{(1)}\big)$. 
    The set of vertices $\bigcup_{i=0}^{D-1}\{u_i,u_i^{(1)}\}$ contains $2D\leq 2d-2$ vertices, and notice that 
\begin{eqnarray*}
    t_D-1 &=& \frac{q^{\lceil \log{2d}\rceil +1}-1}{q-1} \\
    &\geq & \frac{2dq-1}{q-1} \\
    &=& (2d-1)\frac{q}{q-1} \\ 
    &\geq& 2d-1.
\end{eqnarray*}    
    Therefore, $\Desc(u_{D-1}^{(1)}) \setminus u_{D-1}^{(1)}$ must contain at least one vertex also in $S$. 

    We know that $\Desc(u_{D-1}^{(1)})$ contains at most $t_D-1$ vertices, and so $\Desc(u_{D-1}^{(1)})$ must contain some vertex not in $S$ which is adjacent to a vertex in $S$, which is in $\delta(S)$. 
    For each $j$ with $1 \leq j \leq D-1$, either $u_{j-1}^{(1)}\in \delta(S)$ or $u_{j-1}^{(1)}\in S$. In the later case, either $u_{j-1}\in \delta(S)$ or $u_{j-1}\in S$. Again in this later case, then $u_{j-1}^{(3)}\in \delta(S)$. It follows that one of $\{u_{j-1}^{(1)}, u_{j-1}, u_{j-1}^{(3)}\}$ must be in $\delta(S)$ for each $j$. 
    It then follows that $\delta(S)$ contains at least $D$ distinct vertices. 
    Noting that $d - \lceil \log_q(2d)\rceil \geq d - \log_q(d) - \big(\log_q(2) +1\big)$, we have finished the proof. 
\end{proof}

In the case that $q=2$, we need to have even fewer vertices in the subset of vertices to ensure that the border is large. 
For example, if we had $$\Desc(u_i^{(1)}) \cup \Desc(u_{i+1}^{(1)}) \cup \ldots \cup \Desc(u_{i+j}^{(1)}) \cup \{ u_i, u_{i+1}, \ldots, u_{i+j}\} \subseteq S,$$ then there are at most two border vertices that are next to a vertex in this set, where we would have $j+1$ border vertices if $q>2$. 
We can circumvent this phenomenon by choosing only one descendant subtree to be in the aeolian-compressed $S$ for every two levels of the tree. 
If $S$ is aeolian-compressed, every second level will have an associated border vertex, resulting in a large border.

\begin{theorem} \label{thm:lower_q2}
    For $T$, a complete binary tree of depth $d$, 
    \[
    \Phi_V(T) \geq 
    \Big\lfloor \frac{d-\lceil \log_2(3d) \rceil}{2} \Big\rfloor
    \geq \frac{d - \log_2{d}}{2} -\frac{3+\log_2{3}}{2}.
    \] 
\end{theorem}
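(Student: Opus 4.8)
The plan is to adapt the argument of Theorem~\ref{thm:lower_q34}, accommodating the obstruction noted just before the theorem statement: in a binary tree a ``solid column'' $\Desc(u_i^{(1)})\cup\dots\cup\Desc(u_{i+j}^{(1)})\cup\{u_i,\dots,u_{i+j}\}\subseteq S$ carries only $O(1)$ boundary vertices, so the cardinality must be chosen far more delicately and only one boundary vertex can be harvested per two levels. Put $D=d-\lceil\log_2(3d)\rceil$. I would take $s$ to be, up to an $O(d)$ correction for the spine vertices and subtree roots that must be present, $\sum_{j=0}^{\lfloor D/2\rfloor-1}(t_{2j+1}-1)$ — the number of vertices used by hanging a full descendant subtree, minus its root, at every second level down to depth about $D$ (so that $s$ is of order $\tfrac43 2^d$, in particular strictly larger than $t_1$). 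By Corollary~\ref{cor:aeolian} I then fix an aeolian-compressed $S$ with $|S|=s$ and $|\delta(S)|=\Phi_V(T,s)$.

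First I would pin down the structure of $S$. Exactly as in the proofs of Lemma~\ref{lem:uv_u_leaves_filled} and Theorem~\ref{thm:lower_q34}, left-ordering forces the vertices of $S$ on each level to be a leftmost block, Lemma~\ref{lem:downward_compressed_all_levels_compressed} forces each subtree meeting $S$ to meet it in every lower level, and Lemma~\ref{lem:uv_u_leaves_filled} forces every descendant subtree hanging to the left of that block to lie in $S$ except, possibly, its own root. Hence $S$ is governed by a single root-to-leaf ``spine'' $u_0,u_1,\dots,u_d$ (where $u_{\ell+1}$ is the child of $u_\ell$ in which the partially-filled region continues), together with, at each level $\ell$, a binary choice: either hang the full subtree rooted at the left child of $u_\ell$ (an ``$R$'' move, costing $t_{\ell+1}$ vertices, with $u_{\ell+1}=u_\ell^{(2)}$) or not (an ``$L$'' move, with $u_{\ell+1}=u_\ell^{(1)}$ and the right child $u_\ell^{(2)}$ heading an empty subtree); plus the choices of which spine vertices and subtree roots belong to $S$.

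The heart of the proof is a counting dichotomy on this spine word. On one hand, since $s$ is essentially $\sum_j(t_{2j+1}-1)$ and the $t_{\ell+1}$ grow geometrically as $\ell$ decreases, the content of $S$ cannot be packed into any proper subtree high up, so the active part of the staircase extends to within $\lceil\log_2(3d)\rceil$ of the leaves, giving $\Omega(D)$ active levels. On the other hand, two consecutive $R$ moves at levels $\ell,\ell+1$ cost $t_{\ell+1}+t_{\ell+2}=\tfrac32\,t_{\ell+1}-\tfrac12$ rather than $t_{\ell+1}$; because $s$ is tuned to afford an $R$ move at only every second level, a careful accounting shows that in each window $\{2m,2m+1\}$ inside $[0,D)$ there is a level $\ell$ at which either $u_\ell\in S$ and $u_\ell$ has an empty child, or $u_\ell\notin S$ while a neighbour of $u_\ell$ (its parent $u_{\ell-1}$ or the subtree-root $u_{\ell-1}^{(1)}$) lies in $S$. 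Each such level contributes a distinct vertex of $\delta(S)$, so $|\delta(S)|\ge\lfloor D/2\rfloor=\big\lfloor\tfrac{d-\lceil\log_2(3d)\rceil}{2}\big\rfloor$; bounding $\lceil\log_2(3d)\rceil\le\log_2 d+\log_2 3+1$ then yields $\Phi_V(T)\ge\Phi_V(T,s)\ge\tfrac{d-\log_2 d}{2}-\tfrac{3+\log_2 3}{2}$.

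I expect the main obstacle to be making the choice of $s$ and the accounting of the previous paragraph precise: $s$ must be pinned so tightly that \emph{every} aeolian-compressed set of that size is forced into an alternating spine, simultaneously excluding the two cheap regimes — tall solid columns (runs of $R$ moves, packing many vertices against $O(1)$ boundary) and sets concentrated in one deep subtree (runs of $L$ moves with the spine vertices \emph{outside} $S$, contributing no boundary directly and merely recursing the problem one level down). Balancing ``the content must reach depth $\approx D$'' against ``but it cannot be a solid column'' is exactly what costs the $\lceil\log_2(3d)\rceil$ levels near the leaves and the factor $\tfrac12$ relative to the $q\in\{3,4\}$ bound.
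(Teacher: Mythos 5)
Your proposal follows essentially the same route as the paper's proof: fix $s$ as the sum of full-subtree-minus-root sizes hung at every second level (the paper uses $\sum_{i'=1}^{D}(t_{2i'}-1)$ with $D=\lfloor(d-\lceil\log_2(3d)\rceil)/2\rfloor$, matching your choice up to indexing), invoke Corollary~\ref{cor:aeolian} and Lemma~\ref{lem:uv_u_leaves_filled} to force an alternating spine structure, and harvest one boundary vertex per two-level window via the chain $u_{2i'+1}^{(1)},u_{2i'+1},u_{2i'},u_{2i'}^{(2)}$, with the $O(d)$ ambiguity from spine vertices and subtree roots absorbed by stopping $\lceil\log_2(3d)\rceil$ levels above the leaves (the paper tracks this as a slack of $3$ per window in an inductive cardinality bound rather than adjusting $s$). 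The accounting you flag as the main obstacle is exactly the paper's induction $P(i)$, so your plan is the paper's argument in sketch form.
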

\begin{proof}
    Let $D = \lfloor (d-\lceil \log_2(3d) \rceil)/2 \rfloor$. 
    Note for later that this implies $t_{2D} \geq 3D+3$. 
    Let $S \subseteq V(T)$ with $|S| = \sum_{i'=1}^{D} (t_{2i'} -1)$  
    such that  $|\delta(S)| = \Phi_V(T,|S|)$. 
    By Corollary~\ref{cor:aeolian}, we may assume without loss of generality that $S$ is aeolian-compressed. 
     Let $u_0, u_1,\ldots, u_d$ be the root-to-leaf path in $T$ with $u_{2i+1} = u_{2i}^{(1)}$ and $u_{2i+2} = u_{2i+1}^{(2)}$.  
    This proof will show that various subsets of $V(T)$ must be in $S$ using an argument on two levels, $2i$ and $2i+1$, per step of the argument, starting with $i=0$ and going through to $i=D$. 

To set up an induction, define the property $P(i)$ to be true if the following three properties hold:  
\begin{equation}
\Big( \bigcup_{0 \leq i' < i} \Desc(u_{2i'}^{(2)})\Big) \cap S = \emptyset, 
\end{equation}
\begin{equation}
 \bigcup_{0 \leq i' < i} \Desc(u_{2i'+1}^{(1)})\setminus \{u_{2i'+1}^{(1)}\} \subseteq S, \text{ and} 
\end{equation}
\begin{equation}
    \sum_{i'=i+1}^{D} (t_{2i'}-1) -3i 
    \leq |\Desc(u_{2i})\cap S|
    \leq \sum_{i'=i+1}^{D} (t_{2i'}-1).
\end{equation}

The base case $P(0)$ is clearly true. 
Assume $P(i)$ is true for some $i$ with $0 \leq i \leq D-2$. 
We will show that $P(i+1)$ is true. 

If $\Desc(u_{2i}^{(2)}) \cap S \neq \emptyset$, then Lemma~\ref{lem:uv_u_leaves_filled} yields $\Desc(u_{2i}^{(1)}) \setminus \{u_{2i}^{(1)}\}  \subseteq S$, however $\Desc(u_{2i}^{(1)}) \setminus \{u_{2i}^{(1)}\}$ contains $t_{2i}>\sum_{i'=i+1}^{D} (t_{2i'}-1)$ vertices, contradicting property (3) of $P(i)$. 
Hence, $\Desc(u_{2i}^{(2)}) \cap S = \emptyset$, and property (1) of $P(i+1)$ holds. 
It follows that $\big(\Desc(u_{2i+1}) \cup \{u_{2i}\}\big) \cap S$ contains between $\sum_{i'=i+1}^{D} (t_{2i'}-1) -3i$ and $\sum_{i'=i+1}^{D} (t_{2i'}-1)$ vertices. 
Since $\Desc(u_{2i+1}^{(1)})\cup \{u_{2i}, u_{2i+1}\}$ contains $t_{2i+2}+2<\sum_{i'=i+1}^{D} (t_{2i'}-1) -3i$ vertices, 
$\Desc(u_{2i+1}^{(2)})\cap S = \Big(\big( \Desc(u_{2i+1}) \cup \{u_{2i}\}\big) \setminus \big(\Desc(u_{2i+1}^{(1)}) \cup \{ u_{2i}, u_{2i+1}\}\big)\Big)\cap S$ must contain at lest one vertex. 
By Lemma~\ref{lem:uv_u_leaves_filled}, $\Desc(u_{2i+1}^{(1)})\setminus \{u_{2i+1}^{(1)}\} \subseteq S$, and property (2) of $P(i+1)$ holds. 
There are $(t_{2i+2}-1)$ vertices in $\Desc(u_{2i+1}^{(1)}) \setminus \{u_{2i+1}^{(1)}\}$, which are all in $S$, and the vertices $\{u_{2i+1}^{(1)}, u_{2i+1}, u_{2i}\}$ may or may not be in $S$. 
All other vertices of $Desc(u_{2i+2})$ are not in $S$. 
It follows that there are between $\big(\sum_{i'=i+1}^{D} (t_{2i'}-1) -3i\big) - (t_{2i+2}-1)-3 = \sum_{i'=i+2}^{D}(t_{2i'}-1) -3(i+1)$ and 
$\sum_{i'=i+1}^{D} (t_{2i'}-1)  - (t_{2i+2}-1) = \sum_{i'=i+2}^{D}(t_{2i'}-1)$ vertices in $\Desc(u_{2i+2})\cap S$, and so property (3) of $P(i+1)$ holds. 

The induction then holds, and so we know that $P(D-1)$ holds. 
From condition (2) of $P(D-1)$, it follows that for each $i'$ with $0 \leq i' <D-1$, either $u_{2i'+1}^{(1)} \in\delta(S)$ or $u_{2i'+1}^{(1)} \in S$. 
If $u_{2i'+1}^{(1)} \in S$, then either $u_{2i'+1} \in\delta(S)$ or $u_{2i'+1} \in S$. 
If $u_{2i'+1} \in S$, then either $u_{2i'} \in\delta(S)$ or $u_{2i'} \in S$. 
If $u_{2i'} \in S$, then $u_{2i'}^{(2)} \in\delta(S)$, since $u_{2i'}^{(2)} \not\in S$ by condition (1) of $P(i+1)$. 
It then follows that $|\{u_{2i'+1}^{(1)}, u_{2i'+1}, u_{2i'}, u_{2i'}^{(2)}\}\cap \delta(S)|\geq 1$, and therefore that 
\[
    \Big|\delta(S) \cap \left( \bigcup_{0 \leq i' <D-1} \{u_{2i'+1}^{(1)}, u_{2i'+1}, u_{2i'}, u_{2i'}^{(2)}\} \right)\Big| \geq D-1.
\]
It follows that $|\delta(S)\setminus \Desc(u_{2D-2})| \geq D-1$. 

From condition (3) of $P(D-1)$, we know $\Desc(u_{2D-2})\cap S$ contains at least $(t_{2D} -1) -3D+3 \geq 2^{\log{3d}}-1 -3d+3 = 3d-3D+2 \geq 2$ vertices and contains at most $(t_{2D}-1)$ vertices, and so $|\delta(S) \cap \Desc(u_{2D-2})| \geq 1$. 
We then have that $|\delta(S)| \geq D$. 
We note that 
\begin{eqnarray*}
D&\geq &\frac{d - \lceil\log_2(3d)\rceil}{2} -1 \\
&\geq & \frac{d - \log_2(3d)-1}{2} -1 \\
&= & \frac{d - \log_2{d}}{2} -\frac{3+\log_2{3}}{2},
\end{eqnarray*}
and the proof follows.
\end{proof}

\subsection{Upper bounds}
\label{sec:upperbounds}

A \emph{linear layout} is a bijection $L:V(G)\rightarrow \{1,2, \ldots, |V(G)|\}$, and define $\vs_L(G) = \max_i |\delta(\{u\in V(G) : L(u)\leq i\})|$. The \emph{vertex separation number} of $G$, introduced in \cite{MR0756435}, is $$\vs(G) = \min \{ \vs_L(G) : L \text{ is a linear layout of } G\}.$$ In a linear layout $L$ of graph $G$, $|\delta(\{u\in V(G) : L(u)\leq i\})| \geq \Phi_V(G,i)$, and so it follows that 
$\Phi_V(G) \leq \vs(G).$ 
This result is folklore, which has been assumed in previous works, but we state it explicitly here. 

\begin{theorem}\label{thm:phiLEQvs}
For a graph $G$, $\Phi_V(G) \leq \vs(G).$
\end{theorem}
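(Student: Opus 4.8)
The plan is to use an optimal linear layout directly and compare it, one volume at a time, to the definition of the isoperimetric peak. First I would fix a linear layout $L : V(G) \to \{1, \ldots, |V(G)|\}$ attaining the minimum in the definition of the vertex separation number, so that $\vs_L(G) = \vs(G)$. For each index $i$ with $0 \le i \le |V(G)|$, set $P_i = \{u \in V(G) : L(u) \le i\}$; since $L$ is a bijection, $|P_i| = i$ exactly, so the prefixes of $L$ realize every possible volume.

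The key (and essentially only) observation is then immediate from the definition of $\vs_L$: we have $|\delta(P_i)| \le \vs_L(G) = \vs(G)$ for every $i$. On the other hand, for any fixed $s$, the set $P_s$ is a particular subset of $V(G)$ of cardinality $s$, and hence a legitimate competitor in the minimization defining $\Phi_V(G,s)$. Therefore $\Phi_V(G,s) = \min_{S : |S| = s} |\delta(S)| \le |\delta(P_s)| \le \vs(G)$.

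Finally, since this inequality holds for every $s$ with $0 \le s \le |V(G)|$ (the case $s = 0$ being trivial), I would take the maximum over $s$ to conclude $\Phi_V(G) = \max_s \Phi_V(G,s) \le \vs(G)$, which is the desired statement.

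The ``main obstacle'' here is really just bookkeeping rather than genuine difficulty: one needs only to note that prefixes of a linear layout hit every cardinality exactly, so that each $P_s$ is an admissible set of volume $s$. Once this is recorded, the proof is a one-line chain of inequalities. This is why the result is folklore; I would keep the write-up to a short paragraph.
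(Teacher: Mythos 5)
Your proof is correct and takes exactly the paper's approach: the paper's one-line justification is precisely that each prefix of a vertex-separation-optimal linear layout is a set of every cardinality whose border is at most $\vs(G)$, hence bounds $\Phi_V(G,s)$ for every $s$. Your write-up just makes the same folklore argument explicit.
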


Pathwidth is a well-known graph parameter, denoted by $\pw(G)$, which is equal to $\vs(G);$ see \cite{MR1178214}. Several results on the pathwidth of complete $q$-ary trees are known, although the literature sometimes phrases these in terms of other graph parameters. 
The cases $q=2$ and $q=3$ were originally found in \cite{MR0881212,MR1004243}. 
In Theorem~3.1 of \cite{MR1283019}, a general process was given to find the pathwidth of a tree, from which the remaining cases can straightforwardly be derived. 
This can be combined with Theorem~\ref{thm:phiLEQvs} to yield the following result. 
\begin{theorem}\cite{MR1283019, MR0881212, MR1004243} \label{thm:pathwidth}
Let $T$ be a $q$-ary tree of depth $d$. If $q\geq 3$, then we have that $\Phi_V(T) \leq \mathrm{pw}(T) = d,$ and if $q=2$, 
   $\Phi_V(T) \leq \mathrm{pw}(T) = \lceil d/2 \rceil.$
\end{theorem}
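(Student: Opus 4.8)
The plan is to split the statement into two independent pieces: the inequality $\Phi_V(T)\le \pw(T)$, and the exact evaluations $\pw(T)=d$ (for $q\ge 3$) and $\pw(T)=\lceil d/2\rceil$ (for $q=2$). For the inequality I would simply invoke the classical identity $\pw(G)=\vs(G)$ for every graph $G$ \cite{MR1178214}, so that Theorem~\ref{thm:phiLEQvs} immediately gives $\Phi_V(T)\le \vs(T)=\pw(T)$; this step requires nothing beyond quoting those two facts. All the remaining work is the computation of $\pw(T)$, equivalently $\vs(T)$, for the complete $q$-ary tree, and this is exactly what the references \cite{MR1283019,MR0881212,MR1004243} supply; the body of the written proof would therefore consist of citing those pathwidth values and combining them with Theorem~\ref{thm:phiLEQvs}.

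For the upper bound on $\vs(T)$ I would construct explicit linear layouts recursively in the depth $d$. When $q\ge 3$, write $T$ as a root $r$ with subtrees $T_1,\dots,T_q$, each a complete $q$-ary tree of depth $d-1$; order the vertices of $T$ by concatenating optimal layouts of $T_1,\dots,T_{q-1}$, then placing $r$, then an optimal layout of $T_q$. For any prefix, the vertices in the border lie in the (at most $\vs$-of-depth-$(d-1)$ sized) border of the single subtree currently being traversed, together with possibly $r$ itself, so the separation grows by at most $1$ per level; with $\vs$ of depth $0$ equal to $0$, this yields $\vs(T)\le d$. For $q=2$ one needs a two-level refinement of the same idea: the root, its two children, and the four depth-$(d-2)$ grandchild subtrees are laid out so that the deep portions of the subtrees are never simultaneously half-processed, dropping the cost to one extra border vertex per two levels; with $\vs$ of depth $0$ equal to $0$ and of depth $1$ equal to $1$ this gives $\vs(T)\le \lceil d/2\rceil$.

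For the lower bound I would use the standard branching principle for vertex separation of trees: if a vertex $v$ lies on disjoint paths to three pairwise-disjoint connected subgraphs each of vertex separation at least $k$, then the ambient graph has vertex separation at least $k+1$ (in any layout, examine the prefix at the first moment at which two of the three subgraphs have been entered but at least one is not yet completed). When $q\ge 3$ the root of a depth-$d$ tree has $q\ge 3$ disjoint depth-$(d-1)$ subtrees, so $\vs$ of depth $d$ is at least $\vs$ of depth $d-1$ plus $1$; iterating down to the single-vertex base case forces $\vs(T)\ge d$. When $q=2$ one instead looks two levels down, where four disjoint depth-$(d-2)$ subtrees hang below the root, giving that $\vs$ of depth $d$ exceeds $\vs$ of depth $d-2$ by at least $1$; together with the depth-$1$ base value this yields $\vs(T)\ge \lceil d/2\rceil$, and the uniform treatment of all remaining $q$ follows from the tree-pathwidth algorithm of Theorem~3.1 of \cite{MR1283019}.

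The main obstacle is the lower bound, and specifically the exactness of the constants rather than a mere $\Omega(d)$ estimate: the binary case in particular needs the ``two heavy subtrees among the grandchildren'' accounting, which a one-level recursion does not see, while the upper-bound layouts are essentially bookkeeping. Since all of these pathwidth identities are already available in \cite{MR1283019,MR0881212,MR1004243}, I would present them as quoted results and devote the actual proof only to assembling them with Theorem~\ref{thm:phiLEQvs} and the equality $\pw=\vs$.
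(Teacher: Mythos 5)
Your proposal matches the paper's treatment: the paper likewise establishes this result purely by quoting the known pathwidth values of complete $q$-ary trees from \cite{MR1283019, MR0881212, MR1004243}, invoking the equality $\pw(G)=\vs(G)$ from \cite{MR1178214}, and combining these with Theorem~\ref{thm:phiLEQvs}. Your additional sketches of the layouts and the branching lower bound are supplementary background rather than a divergence, so the approach is essentially identical.
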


Both the proofs of Theorem~\ref{thm:both_q5} and \ref{thm:both_q2} are now complete as a consequence of Theorems~\ref{thm:lower_q5}, \ref{thm:lower_q2}, and \ref{thm:pathwidth}; however, the upper bound of Theorem~\ref{thm:both_q34} is not yet established since the stated upper bound is stronger than that given in Theorem~\ref{thm:pathwidth}.
In the case that $q \in \{3,4\}$, we can strengthen the upper bound to differ from the lower bound of Theorem~\ref{thm:lower_q34} by an additive constant. The following result completes that proof of Theorem~\ref{thm:both_q34}. 

\begin{theorem}
    Suppose $T$ is a complete $q$-ary tree of depth $d$ with $q \in \{3,4\}$ and with $d$ sufficiently large. We then have that 
    \[\Phi_V(T) \leq d-\big\lfloor \log_q(d)\big\rfloor + 2 \leq d - \log_q(d)-3.\]
\end{theorem}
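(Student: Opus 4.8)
The plan is to exhibit an explicit linear layout $L$ of $T$ witnessing $\vs(T) \le d - \lfloor \log_q(d)\rfloor + 2$, and then invoke Theorem~\ref{thm:phiLEQvs} to conclude $\Phi_V(T) \le \vs(T)$. The layout that achieves $\vs(T) = \pw(T) = d$ in Theorem~\ref{thm:pathwidth} processes the tree subtree-by-subtree; the point of the improvement is that near the \emph{bottom} of the tree one can be more economical. Concretely, set $\ell = \lfloor \log_q(d)\rfloor$ and regard $T$ as a complete $q$-ary tree of depth $d-\ell$ in which each leaf is replaced by a copy of a complete $q$-ary tree $T^*$ of depth $\ell$. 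The standard ``peeling'' layout for the depth-$(d-\ell)$ skeleton contributes at most $(d-\ell) + O(1)$ to the vertex separation if at every moment we only have \emph{one} active copy of $T^*$ partially laid out (all earlier copies fully laid out, all later copies untouched), together with at most a constant number of skeleton vertices on the current root-to-active-leaf path that still have unfinished children. Inside the active copy of $T^*$ we recursively lay out the depth-$\ell$ tree; but $T^*$ has only $q^{\ell+1}/(q-1) \le O(d)$ vertices, and by Theorem~\ref{thm:pathwidth} applied to $T^*$ its pathwidth is only $\ell$, which is $O(\log d)$ — far below the budget.

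The execution has three steps. \textbf{Step 1:} make the decomposition precise — fix the root-to-leaf structure of the skeleton, and describe the layout order as: fully process the left subtree of the root, then the next, and so on, recursing; when we reach depth $d-\ell$ we switch to the depth-$\ell$ sublayout of $T^*$. \textbf{Step 2:} bound the ``frontier'' $|\delta(\{u : L(u) \le i\})|$ at an arbitrary position $i$. Split the frontier into (a) skeleton vertices whose subtree is not yet fully processed: these lie on the current root-to-active-copy path, and a careful accounting (only the vertex currently being ``opened'' at each of the $\le d-\ell$ skeleton levels contributes, and only at its boundary between finishing one child-subtree and starting the next) gives at most $d - \ell$ such vertices plus a small additive slack; and (b) vertices inside the currently active copy of $T^*$: these number at most $\vs(T^*) = \pw(T^*) \le \ell$ by Theorem~\ref{thm:pathwidth}, but crucially we only need a \emph{constant} here, because the active copy can be laid out so that its internal frontier never exceeds $2$ by choosing to descend one child-branch at a time — the $T^*$ is just being used to ``store'' the descendants, and the separation cost of a caterpillar-like traversal of $T^*$ is $O(1)$, not $\ell$. \textbf{Step 3:} add the $O(1)$ terms from (a) and (b) and check the arithmetic reduces to $d - \lfloor\log_q(d)\rfloor + 2$; then note $-\lfloor \log_q d\rfloor + 2 \le -\log_q d + 3$ for the second inequality, and state that $d$ ``sufficiently large'' is only needed so that $\ell \ge 1$ and the additive slack is absorbed.

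The main obstacle is \textbf{Step 2(a)} — getting the additive constant genuinely small. Naively, the root-to-active-leaf path in the skeleton has $d-\ell$ internal vertices, and each might simultaneously be a boundary vertex, giving $d-\ell$ — fine — but one must also count the roots of already-finished sibling subtrees that still border unfinished material, and the ancestors that have children both done and not done; a clumsy bound here produces an extra $+c$ for non-constant $c$, destroying the estimate. The fix is the usual pathwidth-of-trees trick (cf.\ the process in \cite{MR1283019}): order the children of each skeleton vertex so that the ``heavy'' recursion is done when the ancestor chain above is as cheap as possible, ensuring that at most \emph{one} skeleton vertex per level is an active boundary vertex and the siblings contribute nothing new once finished. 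I would also double-check the edge case where the position $i$ falls exactly at the switch into or out of a $T^*$ copy, since that is where (a) and (b) both peak and the two $O(1)$ slacks add. Everything else is routine bookkeeping.
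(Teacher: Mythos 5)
Your strategy cannot work, and the obstruction is already in the paper's own statements: by Theorem~\ref{thm:pathwidth}, for $q\geq 3$ the pathwidth (equivalently, the vertex separation number) of the complete $q$-ary tree of depth $d$ is \emph{exactly} $d$. Hence there is no linear layout $L$ with $\vs_L(T)\leq d-\lfloor\log_q(d)\rfloor+2$, and the chain $\Phi_V(T)\leq \vs(T)$ from Theorem~\ref{thm:phiLEQvs} can never yield a bound below $d$. The entire point of this theorem (and of Theorem~\ref{thm:vs}) is that $\Phi_V$ falls strictly below $\vs$ for these trees: a layout forces the witnessing sets $\{u: L(u)\leq i\}$ to be nested, whereas the isoperimetric peak allows a different, unrelated set for each volume $s$, and it is precisely the failure of nesting that buys the $\log_q(d)$ saving. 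Your Step 2(b) is also internally inconsistent: the prefixes of any layout, restricted to the active copy $T^*$, form a chain adding one vertex at a time, so at some moment the frontier inside $T^*$ is at least $\vs(T^*)=\pw(T^*)=\ell$; a ``caterpillar-like'' traversal of a complete $q$-ary tree of depth $\ell$ with frontier $O(1)$ does not exist for $q\geq 3$. More globally, if your accounting $(d-\ell)+O(1)$ were correct it would contradict $\pw(T)=d$.

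The paper's proof is necessarily of a different nature: it argues by contradiction on a single cardinality. Assuming some $s$ forces $|\delta(S)|\geq d-\alpha+1$ for every $|S|=s$ (with $\alpha=\lfloor\log_q(d)\rfloor-2$), it greedily builds an explicit set $S$ of size $s$ by descending a root-to-leaf path and, at each level, filling as many whole child-subtrees as the remaining budget allows; the boundary of $S$ then sits on the descent path. If that boundary is already at most $d-\alpha$ one is done; otherwise the boundary is so long that one can delete the portion of $S$ lying in a subtree rooted at depth $d-(\alpha+1)$ (at most $t_{d-\alpha-1}\leq d-2\alpha-1$ vertices) and re-insert the same number of vertices at boundary positions along the path, producing a set $\overline{S}$ of the same cardinality whose boundary has size at most $d-\alpha$. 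To salvage your write-up you would need to abandon the layout/vertex-separation framing entirely and make a per-cardinality construction of this kind; the decomposition into a depth-$(d-\ell)$ skeleton with depth-$\ell$ blocks does not by itself supply one.
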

\begin{proof}
  For convenience, let $\alpha = \lfloor \log_q(d)\rfloor-2$. 
  Observe that with this value of $\alpha$ we have $d-2\alpha-1 \geq t_{d-\alpha-1}$ since 
  $t_{d-\alpha-1} 
  = \frac{q^{\alpha+2}-1}{q-1} 
  \leq \frac{d}{q-1}
  \leq (d-1)/2,$ where this last implication follows since $q \geq 3$, and since $(d-1)/2 \leq d - 2(\log_q(d) -2) -1 \leq d-2\alpha -1$, which is straightforwardly verified using calculus.

  We suppose for the sake of contradiction that $\Phi_V(T) > d - \alpha$, which gives that there is some integer $s$, with $1 \leq s \leq |V(T)|$, such that all subsets of vertices $S \subseteq V(T)$ with $|S|=s$ have $|\delta(S)| \geq d-\alpha +1$. 
  We will construct some set $S$ of cardinality $|S|=s$ over a series of steps and then modify this set to form $\overline{S}$ of cardinality $|\overline{S}|=s$. 
  To form the contradiction, we show that either $|\delta(S)| \leq d-\alpha$ or $|\delta(\overline{S})| \leq d-\alpha$. 
  We initialize $S=\emptyset$. 
    
  During step $i$ of the recursion, the value $r_i$ denotes the number of remaining vertices that must still be placed into $S$ to ensure the cardinality of $S$ will be $s$ at the end of the process. 
  As such, define $r_0=s$ since we will need to place another $s$ vertices into $ S=\emptyset$ for $S$ to have cardinality $s$. 
  
  Over the steps in this proof, we will define a path $(u_0, u_1, \ldots)$ with $u_0$ as the root vertex of the tree, and where $u_{i}$ is a child of $u_{i-1}$.
  It will be convenient to define four sets, $S_0$, $S_1'$, $S_1$, and $S_2$, which are each initialized as the empty set. 

  Suppose we are currently at step $i$ of the recursion. 
  We observe that we cannot have $r_{i-1}=0$, or else we would have stopped the process during the last round. 
  We split into four cases. 

\medskip
\noindent   {\emph Case 1}:  $1 \leq r_{i-1} \leq t_{i}-2$.  We define $s_i=0$, and we do not add vertices to the set $S$ during this step. 
    Add $u_{i-1}$ to $S_0$. 
    We set $r_i = r_{i-1}$ and set $u_i=u_{i-1}^{(1)}$. 

\medskip
\noindent   {\emph Case 2}: $t_{i}-1 \leq r_{i-1} \leq 2t_{i}-2$. 
    We define $s_i=1$. 
    Add the $t_{i}-1$ vertices of $\Desc(u_{i-1}^{(1)})\setminus \{u_{i-1}^{(1)}\}$ to $S$. 
    Add the vertex $u_{i-1}^{(1)}$ to $S_1'$ and the vertex $u_{i-1}$ to $S_1$. 
    If $r_{i-1} = t_{i}-1$, the process terminates at this step. 
    Otherwise, we set $r_i = r_{i-1}- (t_{i}-1)$ and set $u_i=u_{i-1}^{(2)}$. 

\medskip
\noindent   {\emph Case 3}: $r_{i-1} = 2t_{i}-1$. 
    Define $s_i=1$. 
    Add the $2t_{i}-1$ vertices of $\Desc(u_{i-1}^{(1)})  \cup \Desc(u_{i-1}^{(2)})\setminus \{u_{i-1}^{(2)}\}$ to $S$. 
    Add the vertices $u_{i-1}$ and $u_{i-1}^{(2)}$ to $S_2$. 
    We set $u_{i}=u_{i-1}^{(2)}$, continue to the next step, but then immediately terminate the procedure at this next step. 

\medskip
\noindent   {\emph Case 4}: $r_{i-1} \geq 2t_{i}$. 
    Define $s_i=\lfloor r_{i-1}/t_{i}\rfloor$.  
    Add the $s_it_{i}$ vertices $\bigcup_{1 \leq j \leq s_i} \Desc(u_{i-1}^{(j)})$ to $S$. 
    Add $u_{i-1}$ to $S_2$. 
    Note that there are $r_i=r_{i-1}-s_it_{i}$ vertices remaining that have not yet been assigned by the end of round $i$. 
    If $r_i=0$, the process terminates. 
    Otherwise, define $u_i=u_{i-1}^{(s_i+1)}$. 

  We make a number of observations about the result of this process: 
  \begin{enumerate}
    \item[(1)] The process terminates at or before round $d$;  
    \item[(2)] The path does not contain a leaf vertex and so has length at most $d$; 
    \item[(3)] $S$ is now of cardinality $s$; 
    \item[(4)] The set of vertices on the path $(u_0, u_1, \ldots)$ is the disjoint union $S_0 \cup S_1 \cup S_2$; 
    \item[(5)] $\delta(S)$ is the disjoint union $S_1' \cup S_2$; and
    \item[(6)] $|S_1'| = |S_1|$. 
  \end{enumerate}
  
  From (5) and (6), $|\delta(S)| = |S_1'| + |S_2| = |S_1| + |S_2|$. 
  Our initial assumption then yields that $d-\alpha+1 \leq |\delta(S)| = |S_1'| + |S_2| = |S_1| + |S_2|$
  From this equation, along with (2) and (4), it follows that $d \geq |S_0|+|S_1| + |S_2| \geq |S_0| + d-\alpha+1$, from which it follows that $\alpha-1 \geq |S_0|$. 

  We will now remove a number of vertices from $S$ to form the set $\overline{S}$. 
  Let $u\in V(T)$ be the right-most vertex of level $d-(\alpha+1)$ 
  such that $\Desc(u)\cap S\neq \emptyset$. 
  If the procedure to create $S$ lasted at least $d-(\alpha+1)$ 
  rounds, then $u = u_{d-(\alpha+1)}$. 
  Remove each vertex in $\Desc(u_{d-(\alpha+1)})\}$  
  from $S$, $S_0$, $S_1$, $S_1'$, and $S_2$ to yield the sets $\overline{S}$, $\overline{S}_0$, $\overline{S}_1$, $\overline{S}_1'$, and $\overline{S}_2$, respectively. 
  There are $t_{d-(\alpha+1) 
  } = \frac{q^{(\alpha+1) 
  +1}-1}{q-1}$ vertices in $\Desc(u_{d-(\alpha+1)
  })$, and so we have removed at most this many vertices from $S$. 
  
  We have that $|\overline{S}_1| + |\overline{S}_2| \geq |S_1| + |S_2| - (\alpha+2) 
  $, since vertices $\{u_i,u_i^{(1)}\}$ have been removed from $S_1$ and $S_2$ for only $\alpha+2$ 
  values of $i$. 
  Observing that $|\overline{S}_1| = |\overline{S}_1'|$, we have $|\overline{S}_1'| + |\overline{S}_2| \geq |S_1| + |S_2| - (\alpha+2)
  \geq (d-\alpha+1) - (\alpha+2) 
  = d-2\alpha - 1$. 
  Since $d-2\alpha-1 \geq t_{d-\alpha-1
  }$, it follows that there are more vertices in $\overline{S}_1'\cup \overline{S}_2$ compared to how many vertices were removed from $S$ to form $\overline{S}$.  
  For each vertex that we removed from $S$ to form $\overline{S}$, we add a vertex to $\overline{S}$ from $\overline{S}_1' \cup \overline{S}_2$, first adding vertices from $\overline{S}_1'$ to $\overline{S}$, and then once $\overline{S}_1'\subseteq \overline{S}$, adding vertices from $\overline{S}_2$ to $\overline{S}$.  

  For each vertex $u_i^{(1)} \in \overline{S}_1'$ that was added to $\overline{S}$, the vertex $u_i^{(1)} \in \delta(S)$ is no longer in $\delta(\overline{S})$, but its parent vertex, $u_i$, is now in $\delta(\overline{S})$. 
  No other vertex is added or removed from $\delta(\overline{S})$ by adding the vertex $u_i^{(1)}$ to $\overline{S}$.  
  For each vertex $u_i \in \overline{S}_2$ that was added to $\overline{S}$, the vertex $u_i\in \delta(S)$ is no longer in $\delta(\overline{S})$, but the vertices $u_{i-1}$, $u_{i+1}$, and $u_i^{(4)}$ might now be in $\delta(\overline{S})$. 
  No other vertex is added or removed from $\delta(\overline{S})$ by adding the vertex $u_i$ to $\overline{S}$. 

  If any $u_i$ is undefined for some $1 \leq i \leq d-(\alpha+1) 
  $, define it as the right-most child of $u_{i-1}$ such that $\Desc(u_i)\cap S \neq \emptyset$, and as $u_{i-1}^{(1)}$ if no such vertex exists. 
  Notice that $u_{d-(\alpha+1) 
  }$ might be in $\delta(\overline{S})$. 
  
  There are two cases to consider. 
    The first case is if there is some vertex $u_i^{(1)} \in \overline{S}_1'$ that was not added to $\overline{S}$, which implies no vertex $u_i \in \overline{S}_2$ was added to $\overline{S}$. 
      In this case, we may then observe that the border of $\delta(\overline{S})$ consists of the path vertices $\{u_0, \ldots, u_{d-(\alpha+1) 
      }\}$ and vertices with the form $u_i^{(1)}$, under the condition that $u_i^{(1)} \in \delta(\overline{S})$ implies that $u_i \notin \delta(\overline{S})$. 
    The second case assumes all vertices of $\overline{S}_1'$ were added to $\overline{S}$. 
      In this case, we may then observe that the border of $\delta(\overline{S})$ consists of the path vertices $\{u_0, \ldots, u_{d-(\alpha+1) 
      }\}$ and vertices with the form $u_i^{(4)}$, under the condition that $u_i^{(4)} \in \delta(\overline{S})$ implies that $u_i \notin \delta(\overline{S})$. 
  In both cases, this implies that $\delta(\overline{S})$ contains at most $d- \alpha
  $ vertices, contradicting our original assumptions. 
  Thus, we have found the contradiction, and the proof is complete.
\end{proof}

\section{Applications and Future Work}
\label{sec:applications}
For Cartesian grid graphs, the vertex separation number \cite{MR2397203} and the isoperimetric peak \cite{MR1082842} are equal. 
It is natural to ask whether these two properties are identical on all graphs. 
As far as we are aware, no example of a graph class has been found where $vs(G)\neq \Phi_V(G)$ until the work in this paper. We have shown that the two parameters may be arbitrarily far apart. The proof of the theorem follows directly from Theorem~\ref{thm:both_q2}.

\begin{theorem} \label{thm:vs}
    There exists a graph $G$ such that \[\vs(G) - \Phi_V(G) = \Omega\big(\log( \log(|V(G)|))\big).\] 
\end{theorem}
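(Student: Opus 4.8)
The plan is to take the separating graph to be a single complete $q$-ary tree and to read the gap off directly from the already-established values of its vertex separation number and its isoperimetric peak; everything beyond invoking those results is a short computation with iterated logarithms.

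Concretely, for each sufficiently large $d$ I would let $G = G_d$ be the complete $q$-ary tree of depth $d$ with $q$ a fixed value in $\{3,4\}$. By Theorem~\ref{thm:pathwidth} we have $\vs(G_d) = \pw(G_d) = d$, and by Theorem~\ref{thm:both_q34} there is some $\varepsilon_d \in [1-\log_q(2),\,3]$ with $\Phi_V(G_d) = d - \log_q(d) + \varepsilon_d$. Subtracting,
\[
\vs(G_d) - \Phi_V(G_d) \;=\; \log_q(d) - \varepsilon_d \;\ge\; \log_q(d) - 3.
\]
It then remains to rewrite this in terms of $n := |V(G_d)| = \tfrac{q^{d+1}-1}{q-1}$. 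The crude bounds $q^{d} \le n \le q^{d+1}$ give $d \le \log_q n \le d+1$, hence $\log_q\log_q n = \log_q(d) + O(1)$, so $\log_q(d) = \Theta(\log\log n)$. Substituting back yields $\vs(G_d) - \Phi_V(G_d) \ge \log_q(d) - 3 = \Omega\big(\log\log|V(G_d)|\big)$, which is the claim; in fact, since also $\vs(G_d)-\Phi_V(G_d) \le \log_q(2d) = O(\log\log n)$, the separation for this family is $\Theta(\log\log|V(G_d)|)$.

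I do not expect any genuine obstacle in this deduction: all of the difficulty has been absorbed into Theorems~\ref{thm:both_q34} and~\ref{thm:pathwidth}. If one wants to locate where the content actually sits, it is entirely in the upper-bound half of Theorem~\ref{thm:both_q34}, namely that $\Phi_V(G_d)$ falls a full $\log_q(d) - O(1)$ below $\pw(G_d)$; that is exactly what the left, down, and aeolian compression machinery of Section~2 (and the $q\in\{3,4\}$ upper-bound argument) were designed to supply, and without it the bound $\Phi_V(G_d)\le d$ coming from pathwidth alone gives no separation at all.

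As a secondary remark, one can also separate the two parameters using binary trees only — for instance by attaching complete binary trees of many small depths to a single common root, so that a depth-$d$ subtree keeps $\pw$ large while every target volume can be realized, up to an $O(\log d)$ correction inside a shallow subtree, by a union of whole subtrees and hence with small border. I would verify the combinatorial details (in particular that the relevant subset sums leave only small gaps) before committing to that route, but for the statement as given the single $q$-ary tree above is the cleanest argument and already delivers the claimed $\Omega(\log\log|V(G)|)$ bound.
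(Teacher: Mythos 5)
Your argument is correct. Fixing $q\in\{3,4\}$ and taking $G_d$ to be the complete $q$-ary tree of depth $d$, Theorem~\ref{thm:pathwidth} together with $\vs=\pw$ gives $\vs(G_d)=d$, Theorem~\ref{thm:both_q34} gives $\Phi_V(G_d)=d-\log_q(d)+\varepsilon_d$ with $\varepsilon_d\le 3$, and your conversion $d=\Theta(\log|V(G_d)|)$, hence $\log_q(d)=\Theta(\log\log|V(G_d)|)$, is a routine and correctly executed computation, so $\vs(G_d)-\Phi_V(G_d)\ge\log_q(d)-3=\Omega(\log\log|V(G_d)|)$. Note, however, that this is not literally the paper's stated derivation: the paper claims the separation "follows directly from Theorem~\ref{thm:both_q2}", i.e.\ from the binary-tree bounds. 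As written, that route does not certify any gap, because for $q=2$ the upper bound in Theorem~\ref{thm:both_q2} is $\lceil d/2\rceil$, which coincides with $\pw(T)=\vs(T)$, so that theorem alone is consistent with $\Phi_V(T)=\vs(T)$; only the $q\in\{3,4\}$ result pins $\Phi_V$ a full $\log_q(d)-O(1)$ below the pathwidth. So your version is the one that actually delivers the claimed bound (and is presumably what the authors intended, the reference to Theorem~\ref{thm:both_q2} being a slip), and you correctly identify that all the substance lives in the upper-bound half of Theorem~\ref{thm:both_q34}. Your secondary sketch about gluing binary subtrees is not needed for the statement and, as you yourself note, would require further verification, so it should be treated only as an aside.
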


A \emph{nested solution} to the isoperimetric problem is a set of subsets of a graph, say $S_i$, with $S_{i} \subseteq S_{i+1}$ such that $|S_{i+1} \setminus S_i|=1$, and where $\Phi_V(G,i) = |\delta(S_i)|$. 
If a nested solution exists, then it follows that $vs(G)=\Phi_V(G)$. 
Using previous results that studied the isoperimetric problem for lattices \cite{MR0200192,MR2946103, WangWang}, 
Barber and Erde \cite{MR3819052} asked whether a nested solution always exists for Cayley graphs of $\mathbb{Z}^d$. 
This was answered in the negative in \cite{arXiv:2402.14087} although they also showed that once a certain threshold was reached, a phase transition occurred, and then the subsets did nest together. In particular, $S_{i} \subseteq S_{i+1}$ for $i$ larger than some particular value.  Although we do not provide a full isoperimetric profile for regular trees, it is not difficult to show from our work that the nesting property breaks frequently throughout. For example, in the case that $q=3$ and $d \geq 6$, there will be many occurrences where $\Desc(u_{d-5})$ contains $178$ vertices in $S$, implying that $\delta(S)\cap \Desc(u_{d-5}) = \Desc(u_{d-5}^{(1)})\cup\Desc(u_{d-4}^{(1)})\cup\Desc(u_{d-3}^{(1)})\cup\Desc(u_{d-2}^{(1)})$. 
When an additional vertex is added so that $\Desc(u_{d-5})$ contains $179$ vertices in $S$, we find that $\delta(S)\cap \Desc(u_{d-5}) = \Desc(u_{d-5}^{(1)})\cup\Desc(u_{d-4}^{(1)})\cup\Desc(u_{d-3}^{(1)})\cup\Desc(u_{d-2}^{(1)})\cup\Desc(u_{d-2}^{(2)})\setminus \{u_{d-5}^{(1)},u_{d-4}^{(1)},u_{d-3}^{(1)},u_{d-2}^{(1)}\}$. 
As such, no non-trivial phase transition occurs in the case of complete trees.

Attention has been paid to the `shape' of subsets of a graph that are isoperimetrically optimal \cite{MR3819052,MR4648905}, although most attention has been paid to the cases where the underlying graphs are lattices. 
Such sets are found to be close to some standard shape, which is a convex subset in the studied cases so far. 
We note the stark difference in the isoperimetrically optimal sets for complete trees we have found, as we have shown that isoperimetrically optimal subsets are often disconnected. 
While the shape in the previously studied cases was close to the shape formed by finding the first number of vertices traversed during a breadth-first transversal of the graph from a certain vertex, the shape formed for complete trees is the first number of vertices traversed during a depth-first traversal of the graph from the root vertex, where a vertex is included once the vertex is exited during the transversal.

There are several parameters bounded and related to pathwidth of a graph, such as the vertex bandwidth \cite{MR2410445}, network reliability \cite{MR1759753}, and the thinness of a graph \cite{MR3958231, MR4540048,MR2294675}. It would be interesting to further investigate how the isoperimetric parameter could influence these results. 

There have been a number of works that have used the bound of \cite{MR2579861}. In \cite{bonato2023kvisibility}, the authors used this result to provide a lower bound on the $k$-proximity number $\text{prox}_k(T)$, which itself is a lower bound on the $k$-visibility localization number $\zeta_k(T)$. Using a straightforward substitution, we find the following sharpened result. 

\begin{theorem}
For each $h \geq 3$, $q \geq 4$ and non-negative integer $k$, there exists a rooted tree of depth $d$ and maximum degree $q$ such that 
    \[
    \zeta_k(T) 
    \geq \mathrm{prox}_k(T) 
    > \frac{1}{4} \frac{d-(k+1)}{(2k+1)^2}.
    \]
\end{theorem}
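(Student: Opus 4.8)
The plan is to revisit the construction and argument of \cite{bonato2023kvisibility} underlying the original lower bound on $\mathrm{prox}_k$, and to substitute our improved estimate for the isoperimetric peak of complete trees in place of the Vrt'{o} bound of \cite{MR2579861} that was used there. First I would recall the construction: one starts from a complete $(q-1)$-ary tree $T_0$ of depth $h$ (so that the resulting maximum degree is $q$), and subdivides each edge into a path of length $2k+1$, producing a rooted tree $T$ of depth $d = h(2k+1)$ of the same maximum degree. The $k$-visibility overhead is exactly absorbed by this subdivision, so that evading a $k$-visibility pursuer on $T$ reduces to evading a $0$-visibility pursuer on $T_0$; the key inequality extracted from \cite{bonato2023kvisibility} then has the shape $\mathrm{prox}_k(T) \geq c\,\Phi_V(T_0)/(2k+1)$ for an explicit constant $c$, the extra factor of $2k+1$ arising when a separator-type quantity is transported from $T_0$ back to $T$.

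The next step is to insert our new bound. When $q \geq 6$ (so $q-1 \geq 5$), Theorem~\ref{thm:both_q5} gives $\Phi_V(T_0) = h$, which already upgrades the leading constant from $\tfrac{3}{40}$ to $1$; when $q \in \{4,5\}$ (so $q-1 \in \{3,4\}$), Theorem~\ref{thm:both_q34} gives $\Phi_V(T_0) \geq h - \log_{q-1}(h) - O(1)$, and for $d$, hence $h$, sufficiently large the logarithmic loss is dwarfed by the gain in the leading constant. In either case one then substitutes $h = \lfloor d/(2k+1)\rfloor$ and absorbs the rounding and the root/leaf boundary corrections into a single additive term, which is where the offset $k+1$ appears. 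Deliberately weakening the resulting constant to $\tfrac14$ for a uniform statement, and using the standard inequality $\zeta_k(T) \geq \mathrm{prox}_k(T)$, one obtains
\[
\zeta_k(T) \geq \mathrm{prox}_k(T) > \frac{1}{4}\cdot\frac{d-(k+1)}{(2k+1)^2},
\]
with $d > k+1$ guaranteed once $h \geq 3$.

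I expect the main obstacle to be largely expository rather than mathematical: one must reproduce enough of the machinery of \cite{bonato2023kvisibility} to pinpoint exactly where the isoperimetric peak of the underlying complete tree enters, and to confirm that nothing in that argument relied on the particular form of the Vrt'{o} estimate (so that a clean substitution is legitimate). The one genuinely delicate point is the small-branching regime $q \in \{4,5\}$, where our control of $\Phi_V$ is only near-optimal; there I would verify explicitly that taking $d$ large enough renders the $\log$-term negligible against the improved constant, so that the (intentionally lossy) constant $\tfrac14$ and offset $k+1$ comfortably accommodate it, and that the required largeness of $d$ is compatible with the quantifier over $h \geq 3$.
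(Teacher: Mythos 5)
Your proposal follows essentially the same route as the paper: the paper obtains this theorem precisely by taking the $\mathrm{prox}_k$ lower-bound machinery of \cite{bonato2023kvisibility} and making a straightforward substitution of the improved isoperimetric-peak bounds (Theorems~\ref{thm:lower_q5} and \ref{thm:lower_q34}) in place of the Vrt'o estimate $\tfrac{3}{40}(d-2)$ from \cite{MR2579861}, with no further argument supplied. The one caution is your treatment of $q\in\{4,5\}$: appealing to ``$d$ sufficiently large'' to neutralize the $\log_{q-1}$ loss sits badly with the quantifier ``for each $h\geq 3$,'' since $d$ is determined by $h$ and $k$; the intended reading is that Theorem~\ref{thm:lower_q34} is valid at every depth and the deliberately lossy constant $\tfrac14$ together with the offset $k+1$ is what absorbs the logarithmic term, so the small-$h$ cases should be checked directly rather than asymptotically.
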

This reduces the multiplicative difference between the known lower and upper bound for $\text{prox}_k$ on complete trees from $\frac{160}{3}$ down to $8$.  We note that the result from \cite{MR2579861} was also used in a result of \cite{MR4643801} for the one-visibility localization number of trees, and the above theorem supersedes this as well. 

We finish by mentioning that the total vertex separation number is a parameter similar to the vertex separation number, but the border sizes are summed rather than the maximum taken. This parameter has a number of interesting characteristics, such as being equal to the profile of the graph \cite{MR1669059} and being equal to the number of edges of the interval graph that contains $G$ as a subgraph and is the minimal such interval graph in terms of the number of edges \cite{MR1629604}.  The isoperimetric parameter in \cite{bonato2023kvisibility} is utilized as a special partial sum of the isoperimetric parameter. 
Given these two facts, it might be worthwhile investigating the full sum of the isoperimetric parameter: \[
\sum_{s=0}^{|V(G)|} \Phi(G,s),
\] 
or by replacing the terms of the sum with $\frac{\Phi(G,s)}{s}.$


\begin{thebibliography}{99}

\bibitem{MR1455181} R.\ Ahlswede, N.\ Cai,  
General edge-isoperimetric inequalities. II. A local-global principle for lexicographical solutions.
{\em European J. Combin.} \textbf{18} (1997) 479--489.

\bibitem{MR1444247} R.\ Ahlswede, N.\ Cai,  General edge-isoperimetric inequalities. {I}. {I}nformation-theoretical methods, {\em European J. Combin.} \textbf{18} (1997) 355--372.

\bibitem{MR4079359} N.\ Alon, C.\ Defant, Isoperimetry, stability, and irredundance in direct products, {\em Discrete Math.} \textbf{343} (2020) 111902.

\bibitem{MR0782626} N.\ Alon, V.\ Milman, $\lambda _1$ isoperimetric inequalities for graphs, and superconcentrators, {\em J. Combin. Theory Ser. B} \textbf{38} (1985) 73--88.

\bibitem{MR3819052} B.\ Barber, J.\ Erde, Isoperimetry in integer lattices, {\em Discrete Anal.} \textbf{7} (2018) 16 pp.

\bibitem{MR4648905} B.\ Barber, J.\ Erde, P.\ Keevash, A.\ Roberts, Isoperimetric stability in lattices, {\em Proc. Amer. Math. Soc.}
\textbf{151} (2023) 5021--5029.

\bibitem{MR2502192} B.\ Bharadwaj, L.\ Sunil Chandran, Bounds on isoperimetric values of trees, {\em Discrete Math.} \textbf{309} (2009) 834-842.

\bibitem{MR1082842} B.\ Bollob\'as, I.\ Leader, Compressions and isoperimetric inequalities, {\em J. Combin. Theory Ser. A} \textbf{56} (1991) 47--62.

\bibitem{MR4643801} A.\ Bonato, T.G.\ Marbach, M.\ Molnar, JD Nir, The one-visibility localization game, {\em Theoret. Comput. Sci.} \textbf{978} (2023) 114186.

\bibitem{bonato2023kvisibility} A.\ Bonato, T.G.\ Marbach, J.\ Marcoux, JD Nir, The $k$-visibility localization game, Preprint 2024.

\bibitem{MR3958231}  F.\ Bonomo, D.\ Estrada, On the thinness and proper thinness of a graph, {\em Discrete Appl. Math.} \textbf{261} (2019) 78--92.

\bibitem{MR4540048} F.\ Bonomo-Braberman, E.\ Brandwein, C.\ Gonzalez, A.\ Sansone, On the thinness of trees, In: {\em Proceedings of the International Symposium on Combinatorial Optimization}, 2022.

\bibitem{arXiv:2402.14087} J.\ Briggs, C.\ Wells, Phase transitions in isoperimetric problems on the integers, Preprint 2024.

\bibitem{fan} F.R.K.\ Chung, \emph{Spectral Graph Theory}, American Mathematical Society, Providence, Rhode Island, 1997.

\bibitem{MR0756435} J.\ Ellis, I.\ Sudborough, J.\ Turner, Graph separation and search number, In: {\em Proceedings of the Twenty-first Annual Allerton Conference On Communication, Control, And Computing }, 1983.

\bibitem{MR1283019} J.\ Ellis, I.\ Sudborough, J.\ Turner, The vertex separation and search number of a graph, {\em Inform. And Comput.} \textbf{113} (1994) 50--79.

\bibitem{MR2397203} J.\ Ellis, R.\ Warren, Lower bounds on the pathwidth of some grid-like graphs, {\em Discrete Appl. Math.} \textbf{156} (2008) 545--555.

\bibitem{MR1629604} P.\ Golovach, The total vertex separation number of a graph, {\em Discrete Mathematics and Applications} \textbf{9} (1997) 86--91.

\bibitem{MR1669059} P.\ Golovach, F.\ Fomin, The total vertex separation number and the profile of graphs, {\em Discrete Mathematics and Applications} \textbf{10}  (1998) 87--94.

\bibitem{MR2035509} L.\ Harper, Global methods for combinatorial isoperimetric problems,  {\em Cambridge Studies in Advanced Mathematics} \textbf{90} (2004).

\bibitem{MR0200192} L.\ Harper, Optimal numberings and isoperimetric problems on graphs, {\em J. Combinatorial Theory} \textbf{1} (1966) 385--393.

\bibitem{TriGrids} A.\ Iamphongsai, K.\ Teeradej,  \emph{An isoperimetric inequality and pursuit-evasion games on triangular grid graphs}, Preprint 2024.

\bibitem{MR1178214} N.\ Kinnersley, The vertex separation number of a graph equals its path-width, {\em Inform. Process. Lett.} \textbf{42} (1992) 345--350.

\bibitem{MR0881212} L.\ Kirousis, C.\ Papadimitriou, Searching and pebbling, {\em Theoret. Comput. Sci.} \textbf{47} (1986) 205--218.

\bibitem{MR1759753} C.\ Lucet, J.\ Manouvrier, J.\  Carlier, Evaluating network reliability and 2-edge-connected reliability in linear time for bounded pathwidth graphs, {\em Algorithmica} \textbf{27} (2000) 316--336.

\bibitem{MR2294675} C.\ Mannino, G.\ Oriolo, F.\ Ricci, S.\  Chandran, The stable set problem and the thinness of a graph, {\em Oper. Res. Lett.} \textbf{35} (2007) 1--9.

\bibitem{MR2410445} Y.\ Otachi, K.\ Yamazaki, A lower bound for the vertex boundary-width of complete k-ary trees, {\em Discrete Math.} \textbf{308} (2008) 2389--2395.

\bibitem{MR1004243} P.\ Scheffler, Die Baumweite von Graphen als ein Maßfür die Kompliziertheit algorithmischer Probleme, PhD thesis, 1989.

\bibitem{MR2946103} E.\ Veomett, A.\ Radcliffe, Vertex isoperimetric inequalities for a family of graphs on {$\Bbb Z^k$}, {\em Electron. J. Combin.}, \textbf{19} (2012).

\bibitem{MR2579861} I.\ Vrt'o, A note on isoperimetric peaks of complete trees, {\em Discrete Math.} \textbf{310} (2010) 1272--1274. 

\bibitem{WangWang} D.\ Wang, P.\ Wang, Discrete Isoperimetric Problems, {\em SIAM Journal on Applied Mathematics}, \textbf{32} (1977) 860--870.

\bibitem{west} D.B.\ West, Introduction to Graph Theory, 2nd edition, Prentice Hall, 2001.

\bibitem{conn} Q.\ Zhu, F.\ Ma, G.\ Guo, D.\ Wang, A new approach to finding the extra connectivity of graphs,
\emph{Discrete Applied Mathematics} \textbf{294} (2021) 265--271.

\end{thebibliography}
\end{document}